\pgfplotsset{compat=1.15}
\newcommand{\RR}{\mathbb{R}}
\newcommand{\NN}{\mathbb{N}}
\newcommand{\diag}{\operatorname{diag}}
\newcommand{\ZZ}{\mathbb{Z}}
\newcommand{\EE}{\mathbb{E}}
\newcommand{\PP}{\mathbb{P}}
\newcommand{\cF}{\mathcal{F}}
\newcommand{\cL}{\mathcal{L}}
\newcommand{\rot}{\mathrm{z}}
\newcommand{\disp}{\mathrm{disp}}
\begin{document}

\title{Birkhoff generic points on curves in horospheres}

\author{Omri Solan}
\address{Einstein Institute of Mathematics, Edmond J.~Safra Campus, Givat Ram, Hebrew University, Jerusalem, Israel}
\email{omrinisan.solan@mail.huji.ac.il}
\author{Andreas Wieser}
\address{Einstein Institute of Mathematics, Edmond J.~Safra Campus, Givat Ram, Hebrew University, Jerusalem, Israel}
\email{andreas.wieser@mail.huji.ac.il}
\thanks{
  Both authors are supported by the ERC grant HomDyn, ID 833423.
  The first named author is funded also by the generous donation of Dr.~Arthur A.~Kaselemas.}
\date{\today}

\begin{abstract}
  Let $\{a_t: t \in \R\}< \SL_d(\R)$ be a diagonalizable subgroup whose expanding horospherical subgroup $U < \SL_d(\R)$ is abelian.
  By the Birkhoff ergodic theorem, for any $x \in \SL_d(\R)/\SL_d(\Z)$ and for almost every point $u \in U$ the point $ux$ is Birkhoff generic for $a_t$ when $t \to \infty$.
  We prove that the same is true when $U$ is replaced by any non-degenerate analytic curve in $U$.

  This Birkhoff genericity result has various applications in Diophantine approximation. For instance, we obtain density estimates for Dirichlet improvability along typical points on a curve in Euclidean space. Other applications address approximations by algebraic numbers and best approximations (in the sense of Lagarias).
\end{abstract}

\maketitle
\setcounter{tocdepth}{1}
\tableofcontents

\section{Introduction}

\subsection{Dirichlet improvability and densities}\label{sec: dirichletimprovability}
Let $\xi = (\xi_1, \ldots, \xi_d)\in \R^d$ be a vector.
By Dirichlet's theorem on simultaneous approximations, the set of inequalities
\begin{align*}
  |p_1+q_1\xi_1+\ldots + q_d\xi_d| \leq N^{-d},
  \quad \max_i |q_i| \leq N
\end{align*}
has a non-zero solution $(p, q_1, \ldots, q_d) \in \Z^{d+1}$ for all $N \in \N$.
We say that $\xi$ is not $\mu$-Dirichlet improvable for $\mu\in (0,1)$ if there exist infinitely many $N \in \N$ for which the inequalities
\begin{align}\label{eq: Dirichletimprovable}
  |p_1+q_1\xi_1+\ldots + q_d\xi_d| \leq \mu N^{-d},
  \quad \max_i |q_i| \leq \mu N
\end{align}
have no non-zero solution $(p, q_1, \ldots, q_d) \in \Z^{d+1}$.
Davenport and Schmidt \cite{DavenportSchmidtII} proved that almost every $\xi \in \R^d$ is not $\mu$-Dirichlet improvable for any $\mu < 1$ (in the language of homogeneous dynamics, this also follows from the Dani correspondence \cite{Dani-correspondence} and recurrence -- see  \cite{KleinbockWeiss-Dirichletimprovable}).

Now let $\phi:[0,1]\to \R^d$ be a curve. If the image of $\phi$ is not contained in an affine hyperplane and $\phi$ is analytic, Shah \cite{Shah-translates1} proved that for almost every $s$ the vector $\phi(s)$ is not $\mu$-Dirichlet improvable for any $\mu<1$.
Among other results, we propose here a strengthening of Shah's result describing the logarithmic density of exceptions to Dirichlet improvability.
Define for any $\xi \in \R^d$ and $\mu \in (0,1)$
\begin{align*}
  \overline{\mathcal{D}}_{\xi}(\mu)  & = \limsup_{N' \to \infty}\frac{1}{\log(N')} \sum_{\substack{N \leq N', \\ \eqref{eq: Dirichletimprovable}\text{ holds for }N, \xi}} \frac{1}{N}, \\
  \underline{\mathcal{D}}_{\xi}(\mu) & = \liminf_{N' \to \infty}\frac{1}{\log(N')} \sum_{\substack{N \leq N', \\ \eqref{eq: Dirichletimprovable}\text{ holds for }N, \xi}} \frac{1}{N}.
\end{align*}

\begin{theorem}[Density in Dirichlet improvability]\label{thm: dirichletimprovable}
  There exists an explicit continuous strictly increasing function $f: [0,1] \to [0,1]$ with $f(0) = 0$ and $f(1) =1$ with the following properties.
  Suppose that $\phi:[0,1]\to \R^d$ is an analytic curve not contained in any affine hyperplane.
  Then for every $\mu\in (0,1)$ and for almost every $s \in [0,1]$ we have
  \begin{align*}
    \overline{\mathcal{D}}_{\phi(s)}(\mu) = \underline{\mathcal{D}}_{\phi(s)}(\mu) = f(\mu).
  \end{align*}
\end{theorem}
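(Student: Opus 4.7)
The plan is to combine the Dani correspondence with the main Birkhoff genericity theorem of the paper applied to the natural curve in an abelian horospherical subgroup. Set $X = \SL_{d+1}(\R)/\SL_{d+1}(\Z)$, let $m_X$ denote the Haar probability measure on $X$, take $a_t = \diag(e^{dt}, e^{-t},\ldots, e^{-t})$, and let $U < \SL_{d+1}(\R)$ be the abelian unipotent subgroup whose elements $u(\xi)$ act on $\R^{d+1} = \R \times \R^d$ by $(p,q) \mapsto (p + \xi\cdot q,\, q)$. Then $U$ is the expanding horospherical subgroup of $\{a_t\}$, and a direct computation at $t = \log N$ shows that \eqref{eq: Dirichletimprovable} admits a nonzero integer solution if and only if the unimodular lattice $a_{\log N}u(\xi)\Z^{d+1}$ contains a nonzero vector in the cube $[-\mu,\mu]^{d+1}$. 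Accordingly, define
\begin{align*}
  E_\mu = \bigl\{x \in X : x\Z^{d+1} \cap [-\mu,\mu]^{d+1} \neq \{0\}\bigr\}
  \quad \text{and} \quad f(\mu) := m_X(E_\mu).
\end{align*}

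The standard estimate $\sum_{N \leq N'}\tfrac{1}{N} = \log N' + O(1)$ together with a Riemann-sum comparison gives
\begin{align*}
  \frac{1}{\log N'}\sum_{N \leq N'}\frac{1}{N}\mathbbm{1}_{E_\mu}\bigl(a_{\log N}u(\phi(s))\Z^{d+1}\bigr)
  = \frac{1}{T}\int_0^T \mathbbm{1}_{E_\mu}\bigl(a_\tau u(\phi(s))\Z^{d+1}\bigr)\,d\tau + o(1)
\end{align*}
with $T = \log N'$, so that $\overline{\mathcal{D}}_{\phi(s)}(\mu)$ and $\underline{\mathcal{D}}_{\phi(s)}(\mu)$ become the $\limsup$ and $\liminf$ of Birkhoff time averages of $\mathbbm{1}_{E_\mu}$ along the $a_t$-orbit of $u(\phi(s))\Z^{d+1}$. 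Since $\phi$ is analytic and not contained in any affine hyperplane, the curve $s \mapsto u(\phi(s))$ is non-degenerate in $U \simeq \R^d$, so the main Birkhoff genericity theorem yields that for almost every $s \in [0,1]$ the point $u(\phi(s))\Z^{d+1}$ is Birkhoff generic for $\{a_t\}$ in $X$. To pass from continuous compactly supported test functions to the indicator $\mathbbm{1}_{E_\mu}$, one sandwiches $\mathbbm{1}_{E_\mu}$ by continuous functions from above and below and uses that $E_\mu$ is a continuity set for $m_X$: its boundary consists of lattices whose shortest vector has sup-norm exactly $\mu$, which is an $m_X$-null condition. Non-escape of mass through the cusp is handled by standard estimates accompanying the Birkhoff genericity statement.

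It remains to verify the properties of $f$. Trivially $f(0) = 0$; that $f(1) = 1$ follows from Minkowski's first theorem applied to the closed cube $[-1,1]^{d+1}$, combined with the vanishing of $m_X$ on the set of lattices with a shortest vector of sup-norm exactly $1$. Continuity of $f$ is immediate from $m_X(\partial E_\mu) = 0$ for every $\mu$. Strict monotonicity follows from $m_X(E_{\mu'} \setminus E_\mu) > 0$ whenever $0 < \mu < \mu' \leq 1$: the set of lattices in $X$ whose shortest nonzero vector has sup-norm in $(\mu,\mu')$ is open in $X$ and nonempty (e.g., $\diag(\lambda^{-d},\lambda,\ldots,\lambda)\Z^{d+1}$ for $\lambda \in (\mu,\mu')$), and is contained in $E_{\mu'} \setminus E_\mu$; alternatively, Siegel's mean value theorem gives that the average number of nonzero lattice vectors in $[-\mu',\mu']^{d+1} \setminus [-\mu,\mu]^{d+1}$ equals $(2\mu')^{d+1} - (2\mu)^{d+1} > 0$. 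An explicit expression for $f$ in terms of $\mu$ and $d$ can be extracted from Rogers' integration formula applied to a suitable Siegel transform. The main obstacle in carrying out this program is the passage from Birkhoff genericity against continuous test functions to the genuine indicator $\mathbbm{1}_{E_\mu}$, which requires care with the non-compactness of $X$ and the boundary regularity of $E_\mu$, together with the discretization estimate linking the logarithmic sum to the continuous-time Birkhoff average.
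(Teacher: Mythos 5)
Your proposal is correct and follows essentially the same route as the paper: the Dani correspondence identifying solvability of \eqref{eq: Dirichletimprovable} with $a_{\log N}u(\phi(s))\Z^{d+1}$ lying in the set of lattices with a nonzero vector in $[-\mu,\mu]^{d+1}$, the Riemann-sum comparison turning the logarithmic density into a continuous-time Birkhoff average, and Theorem~\ref{thm: main} applied to the non-degenerate curve $s\mapsto u(\phi(s))$, with the indicator handled via Jordan measurability of $X(\mu)$. Your additional verifications of the properties of $f$ and the remark that one can pass to the compact complement $X\setminus E_\mu$ (or use non-divergence) to handle the unboundedness of $E_\mu$ are fine and consistent with what the paper leaves implicit.
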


\begin{remark}
  It is a simple consequence of the Birkhoff ergodic theorem that there is a function $f$ as above so that for every $\mu\in (0,1)$ and for almost every $\xi \in \R^d$ we have $\overline{\mathcal{D}}_{\xi}(\mu) = \underline{\mathcal{D}}_{\xi}(\mu) = f(\mu)$.
  This function $f$ coincides with the one in Theorem~\ref{thm: dirichletimprovable} and is given by the measure of a parametrized family of neighborhoods of the cusp in $\SL_{d+1}(\R)/\SL_{d+1}(\Z)$.
  The speed of convergence $f(\mu) \to 1$ for $\mu \to 1$ has been studied in recent papers by Kleinbock and Yu \cite{KleinbockYu} for $d=1$ and Kleinbock, Str\"ombergsson, Yu \cite{KleinbockStrombergssonYu} for $d>1$. The decay of $f(\mu)$ for $\mu\to 0$ is easily established via Siegel's formula \cite{Siegel}.
\end{remark}

\begin{remark}
  The methods in Theorem~\ref{thm: dirichletimprovable} yield similar results for Diophantine approximation of matrices (see e.g.~\cite{ShahYang-translates, Yang-translates}).
\end{remark}

We turn now to algebraic approximations:
The \emph{polynomial height} $\|\alpha\|$ of an algebraic number $\alpha\in \R$ is defined to be the height $\height(\cdot)$ of its minimal polynomial, that is, the maximal absolute value of the coefficients for the minimal integer multiple of the minimal polynomial.
From Theorem~\ref{thm: dirichletimprovable}, one obtains the logarithmic density of solutions to
\begin{align*}
  P(s) \leq \mu N^{-d}, \
  P \in \Z[x] \text{ of degree }d \text{ with }\height(P)\leq \mu N
\end{align*}
for almost every $s \in [0,1]$.

Consider now the following problem. Given $s \in \R$, $\mu>0$, and $N \in \N$ does there exist an algebraic number $s' \in \R$ of degree $d$ with
\begin{align}\label{eq: algapprox}
  |s'-s|\leq \mu N^{-(d+1)} \text{ and }\norm{s'}\leq N \text{?}
\end{align}
Define
\begin{align*}
  \overline{\mathcal{A}}_s(\mu)  & = \limsup_{N' \to \infty}\frac{1}{\log(N')} \sum_{\substack{N \leq N', \\ \eqref{eq: algapprox}\text{ holds for }N, s}} \frac{1}{N}, \\
  \underline{\mathcal{A}}_s(\mu) & = \liminf_{N' \to \infty}\frac{1}{\log(N')} \sum_{\substack{N \leq N', \\ \eqref{eq: algapprox}\text{ holds for }N, s}} \frac{1}{N}.
\end{align*}

\begin{theorem}[Algebraic approximations]\label{thm: algapprox}
  There is an explicit continuous function $f: \R_{>0}\times \R \to (0,1)$ which is strictly increasing in the first coordinate with the following properties.
  Then for every $\mu\in (0,1)$ and for almost every $s \in \R$ we have
  \begin{align*}
    \overline{\mathcal{A}}_s(\mu) = \underline{\mathcal{A}}_s(\mu) = f(\mu, s).
  \end{align*}
\end{theorem}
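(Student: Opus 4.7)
The plan is to derive the theorem by the same homogeneous-dynamical mechanism as Theorem~\ref{thm: dirichletimprovable}, but applied to a different target region that encodes an additional lower bound on $|P'(s)|$ needed to extract an algebraic root from a polynomial near-zero of $P$ at $s$. The first step is to reformulate \eqref{eq: algapprox}: if $P \in \Z[x]$ is the minimal polynomial of $s'$, then $\|s'\| = \height(P)$, and Taylor's theorem $P(s) = P'(s)(s-s') + O(|s-s'|^2)$, combined with a Newton-type converse producing a real root from a small $|P(s)|/|P'(s)|$, shows that up to multiplicative constants absorbable into $\mu$, condition \eqref{eq: algapprox} at scale $N$ is equivalent to the existence of $P \in \Z[x]$ of degree $d$ with $\height(P) \leq N$ and $|P(s)| \leq \mu N^{-(d+1)} |P'(s)|$.

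I would then encode this condition via the same Veronese setup used for Theorem~\ref{thm: dirichletimprovable}. Let $u_s$ be the unipotent element of $\SL_{d+1}(\R)$ defined by $u_s \cdot (p, q_1, \ldots, q_d)^T = (p + sq_1 + s^2 q_2 + \cdots + s^d q_d, q_1, \ldots, q_d)^T$, and let $a_t = \diag(e^{dt}, e^{-t}, \ldots, e^{-t})$; the curve $s \mapsto u_s$ is analytic and non-degenerate in the abelian expanding horospherical $U < \SL_{d+1}(\R)$ of $a_t$, its image being the classical Veronese curve. For $(p, q_1, \ldots, q_d) \in \Z^{d+1}$ corresponding to $P(x) = p + \sum_i q_i x^i$, the rescaled lattice vector $v = a_{\log N} u_s (p, q_1, \ldots, q_d)^T$ satisfies $v_0 = N^d P(s)$ and $v_i = N^{-1} q_i$ for $i \geq 1$, so that $\sum_{i=1}^d i s^{i-1} v_i = N^{-1} P'(s)$. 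The polynomial condition above therefore translates to $v$ lying in the bounded set
\begin{align*}
  R_{\mu, s} = \Big\{ v \in \R^{d+1} : |v_0| \leq \mu \, \big|\textstyle\sum_{i=1}^d i s^{i-1} v_i\big|, \ |v_i| \leq 1 \text{ for } i \geq 1 \Big\}.
\end{align*}

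By the main Birkhoff genericity theorem of the paper (advertised in the abstract), for almost every $s$ the orbit $\{a_t u_s \Z^{d+1} : t \geq 0\}$ equidistributes to Haar measure on $\SL_{d+1}(\R)/\SL_{d+1}(\Z)$. After a standard cusp truncation handling the non-compactness of $\{\Lambda : \Lambda \cap R_{\mu,s} \neq \{0\}\}$ in the space of lattices, applying the Birkhoff average to the indicator of this set gives
\begin{align*}
  \overline{\mathcal{A}}_s(\mu) = \underline{\mathcal{A}}_s(\mu) = f(\mu, s),
\end{align*}
where $f(\mu, s)$ is the Haar measure of that set. Continuity of $f$ in $(\mu, s)$ and its strict monotonicity in $\mu$ follow from the explicit description of $R_{\mu, s}$.

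The principal technical challenge is precisely the $s$-dependence of $R_{\mu, s}$: in contrast with the fixed box used for Theorem~\ref{thm: dirichletimprovable}, the defining linear form $\sum i s^{i-1} v_i$ tilts with $s$, producing a genuine two-variable $f$ rather than a function of $\mu$ alone; verifying that the resulting Haar measure has the claimed continuity and $\mu$-monotonicity properties requires some care with the boundary of $R_{\mu,s}$. Auxiliary but routine issues include excluding reducible $P$ (so that the associated algebraic number $s'$ has degree exactly $d$) and values of $s$ on a null set where the Newton converse fails or the linear form degenerates on the unit box.
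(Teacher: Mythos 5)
Your overall strategy is the same as the paper's: encode degree-$d$ algebraic approximation of $s$ through integer points of the lattice $a_{\log N}u_s\Z^{d+1}$ lying in a bounded, $s$-dependent region, apply Theorem~\ref{thm: main} to the Veronese curve, and read off $f(\mu,s)$ as the Haar measure of the set of lattices meeting that region. The paper likewise runs Taylor/Newton in both directions (Claim~\ref{claim: algebraic approximation is dynamics}) and settles Jordan measurability of the region via inner and outer $\varepsilon$-thickenings $M_{s,\mu}^{\pm\varepsilon}$ together with Siegel's formula.

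There is, however, a concrete error in your region $R_{\mu,s}$: you have dropped the constraint coming from the constant coefficient of $P$. The height condition $\norm{s'}\le N$ bounds all $d+1$ coefficients of the minimal polynomial, including $q_0=p$. In your coordinates $v=a_{\log N}u_s(p,q_1,\dots,q_d)^T$ one has $p=P(s)-\sum_{i\ge1}q_is^i=N^{-d}v_0-N\sum_{i\ge1}s^iv_i$, so $|p|\le N$ translates (up to $o(1)$ as $N\to\infty$, since $v_0$ stays bounded on the relevant events) into the additional inequality $\bigl|\sum_{i=1}^d s^iv_i\bigr|\le 1$. This is precisely the second defining condition of the paper's set $C_s$, and it is genuinely binding once $|s|$ is not small (for $d=1$, $s=2$ it cuts $[-1,1]$ down to $[-1/2,1/2]$). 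With $R_{\mu,s}$ as you define it you compute the logarithmic density of a weaker property, namely approximation by roots of degree-$d$ polynomials whose constant term is only bounded by $O_s(N)$ rather than $N$, so your $f(\mu,s)$ overestimates $\overline{\mathcal{A}}_s(\mu)$ for such $s$ and the claimed equality fails. The fix is simply to intersect $R_{\mu,s}$ with $\{\,|\sum_i s^iv_i|\le1\,\}$, recovering the paper's $M_{s,\mu}$. Separately, excluding approximants of exact degree $d'<d$ is less routine than you suggest: the paper needs a height bound for integer polynomial divisors (Lemma~\ref{lem: div-size}) plus an appeal to the already-established upper bound in lower degree to show that lower-degree roots would be too well approximated to occur with positive logarithmic frequency; this argument should be supplied rather than deferred.
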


\subsection{Best approximations of points on curves}\label{sec: best approx}
Fix a norm $\norm{\cdot}$ on $\R^d$ for $d \geq 2$ and let $\xi \in \R^d$.
Following Lagarias \cite{Lagarias1, Lagarias2} we say that a vector $(p, q) \in \Z^d \times \N$ is a best approximation of $\xi$ if
\begin{align*}
\norm{p-q\xi}< \min_{q'<q}\min_{p' \in \Z^d} \norm{p'-q'\xi} \text{ and } \norm{p-q\xi}=\min_{p' \in \Z^d} \norm{p'-q\xi}.
\end{align*}
If $\xi$ has an irrational coordinate, one obtains by recursion an infinite sequence $v_k = (p_k, q_k)$ of best approximations of $\xi$.
In a recent article \cite{ShapiraWeiss}, Shapira and Weiss obtain various new results about statistical properties of the best approximations of Lebesgue almost every $\xi$, some of which we now recall.

Define for any $v=(p, q) \in \Z^d \times \N$
\begin{align*}
  \disp(\xi, v) = q^{\frac{1}{d}}(p-q\xi).
\end{align*}
By Dirichlet's theorem, there exists a constant $C>0$ depending only on the norm $\norm{\cdot}$ so that $\norm{\disp(\xi, v)}\leq C$ whenever $v$ is a best approximation of $\xi$.
Moreover, we may associate a certain sequence of lattices with the best approximations. To that end, we identify $\R^d \simeq \{x \in \R^{d+1}: x_{d+1}=0\}$ and denote for any $v \in \R^{d+1}\setminus\R^d$ by $\pi^{v}: \R^{d+1}\to \R^d$ the projection parallel to $v$.
For $v \in \Z^{d}\times \N$ the projection $\Lambda_v = \pi^v(\Z^{d+1})$ defines a lattice in $\R^d$ and hence a point
\begin{align*}
  [\Lambda_v] \in X_d = \rquot{\SL_d(\R)}{\SL_d(\Z)}
\end{align*}
after dilation (not to be confused with the shape of the lattice $\Lambda_v$).

\begin{theorem}[{Shapira, Weiss \cite[Thm.~1.1]{ShapiraWeiss}}]\label{thm: ShapiraWeiss}
  For any norm $\norm{\cdot}$ on $\R^d$ there exists a Borel probability measure $\mu_{d, \norm{\cdot}}$ on $X_d \times\R^d$ such that for Lebesgue almost $\xi \in \R^d$ the following holds.
  Let $(v_k)$ be the sequence of best approximations of $\xi$. Then the sequence
  \begin{align*}
    \{([\Lambda_{v_k}], \disp(\xi, v_k))\}\subset X_d \times\R^d
  \end{align*}
  is equidistributed with respect to $\mu_{d, \norm{\cdot}}$.
\end{theorem}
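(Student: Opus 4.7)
The plan is to reinterpret the sequence of best approximations as the first-return sequence of a diagonal flow on $X_{d+1}$ to a suitable Poincar\'e section, and then to apply Birkhoff's ergodic theorem.

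Let $a_t = \diag(e^t, \ldots, e^t, e^{-dt}) \in \SL_{d+1}(\R)$ and let $U = \{u_\xi : \xi \in \R^d\}$ be the expanding horospherical subgroup, where $u_\xi$ has last column $(\xi, 1)^\top$ and agrees with the identity elsewhere. Write $x_0 = [\Z^{d+1}] \in X_{d+1}$ and $\Lambda_t(\xi) = a_t u_\xi \Z^{d+1}$. A direct calculation shows that $v = (p, q) \in \Z^d \times \N$ is a best approximation of $\xi$ (for the fixed norm) if and only if at time $t = \tfrac{1}{d}\log q$ the vector $a_t(p - q\xi, q) = (\disp(\xi, v), 1)$ is the shortest nonzero vector of $\Lambda_t(\xi)$ whose last coordinate lies in $(0, 1]$. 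Moreover, the dilated projected lattice $[\Lambda_v] \in X_d$ is read off from $\Lambda_t(\xi)$ by projecting parallel to this distinguished vector.

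I would then define a Poincar\'e section $\Sigma \subset X_{d+1}$ as the set of lattices $\Lambda$ whose shortest nonzero vector with last coordinate in $(0, 1]$ has last coordinate equal to $1$. The successive first-return times of $\{a_t u_\xi x_0\}_{t > 0}$ to $\Sigma$ correspond bijectively to the best-approximation epochs $\tfrac{1}{d}\log q_k$, and the pair $([\Lambda_{v_k}], \disp(\xi, v_k))$ is the image of the $k$-th return point under two continuous maps $\Sigma \to X_d$ and $\Sigma \to \R^d$. The Haar measure on $X_{d+1}$ induces a measure $\nu_\Sigma$ on $\Sigma$ via Kac's formula, and the candidate limiting measure $\mu_{d, \|\cdot\|}$ is defined as the pushforward of the normalization of $\nu_\Sigma$ to $X_d \times \R^d$ under these two maps.

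The dynamical step is now standard. Since $\{a_t\}$ acts ergodically on $X_{d+1}$ with respect to Haar measure, the first-return map $T : \Sigma \to \Sigma$ is ergodic with respect to $\nu_\Sigma$, so Birkhoff's theorem yields equidistribution of $\{T^k \Lambda\}$ to $\nu_\Sigma$ for $\nu_\Sigma$-a.e.~$\Lambda \in \Sigma$. Unfolding the suspension representation, this gives equidistribution of the $\Sigma$-visits along Haar-a.e.~orbit in $X_{d+1}$. To transfer this to Lebesgue-a.e.~$\xi \in \R^d$, I would disintegrate Haar measure locally near $x_0$ as Lebesgue along $U$-leaves times a transverse measure and apply Fubini, combined with the homogeneity of $U$ by the $a_t$-action. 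The main obstacle I anticipate is geometric rather than dynamical: verifying that $\Sigma$ is a genuine cross-section of positive finite $\nu_\Sigma$-measure with almost surely finite return times and $\nu_\Sigma$-null boundary, and that the formal definition of $\Sigma$ really captures exactly the best-approximation epochs for the chosen norm. Once these cross-section properties are confirmed, the remainder is routine ergodic theory.
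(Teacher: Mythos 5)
This theorem is not proved in the paper at all --- it is quoted verbatim from Shapira--Weiss, and the paper only summarizes the relevant construction in \S\ref{sec: applications} when deducing Theorem~\ref{thm: bestapprox}. Measured against that cited proof, your proposal follows essentially the same route: the flow $a_t=\diag(\euler^t,\ldots,\euler^t,\euler^{-dt})$, the identification of best approximations with visits to a cross-section, the induced (Ambrose--Kakutani/Kac) transverse measure, the pushforward to $X_d\times\R^d$ defining $\mu_{d,\norm{\cdot}}$, and the Fubini argument over horospherical leaves. The one point worth stressing is that the step you defer as a ``geometric obstacle'' is in fact the bulk of the Shapira--Weiss proof, and your specific section $\Sigma$ makes it harder than necessary: they first take a \emph{bounded} cross-section $\mathcal{S}_{r_0}$ (lattices with a primitive vector in $D_{r_0}=\{\norm{\bar v}\le r_0,\ v_{d+1}=1\}$, with $r_0$ supplied by Dirichlet's bound on displacements) for which finiteness of the transverse measure, temperedness and reasonableness can be verified, and only then carve out the Jordan-measurable subset $\mathcal{B}$ where the distinguished vector is the unique primitive vector in the cylinder $C_{r(\Lambda)}$ --- this subset, not the whole section, is what detects best approximations, and even then the correspondence with best approximations holds only up to finitely many exceptions (coming from lattice vectors with vanishing last coordinate at small denominators). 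Your single unbounded section defined by a global ``shortest vector in the slab'' condition conflates these two roles, and establishing that it has finite transverse measure, null boundary and the temperedness needed to pass from Birkhoff genericity of the orbit to equidistribution of the visit sequence is not routine; this is exactly the content of \cite[Thms.~5.11, 8.6, Prop.~9.8]{ShapiraWeiss}.
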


The measure $\mu_{d, \norm{\cdot}}$ is further characterized in \cite{ShapiraWeiss} and is notably not a product measure. The projection of $\mu_{d, \norm{\cdot}}$ to $\R^d$ is boundedly supported and absolutely continuous with respect to Lebesgue with a non-trivial density. We refer to \cite{ShapiraWeiss} for a more precise description of the measure $\mu_{d, \norm{\cdot}}$ and for various extensions of the above theorem.
We remark that similar questions to Theorem~\ref{thm: ShapiraWeiss} had been studied previously -- cf.~\cite[\S3]{ShapiraWeiss} for a discussion.

As a consequence of the Birkhoff genericity results in the current article, we obtain results for the best approximations of points on curves.

\begin{theorem}[Best approximations of points on curves]\label{thm: bestapprox}
  Let $\phi:[0,1]\to \R^d$ be an analytic curve not contained in any affine hyperplane. Then for almost every $s \in [0,1]$ the following holds.
  Let $(v_k)$ be the sequence of best approximations of $\phi(s)$. Then the sequence
  \begin{align*}
    \{([\Lambda_{v_k}], \disp(\phi(s), v_k))\}\subset X_d \times\R^d
  \end{align*}
  is equidistributed with respect to $\mu_{d, \norm{\cdot}}$.
\end{theorem}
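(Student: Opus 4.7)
The plan is to combine the main Birkhoff genericity theorem announced in the abstract with the cross-section encoding of best approximations introduced by Shapira and Weiss in \cite{ShapiraWeiss}. Set $G = \SL_{d+1}(\R)$, $\Gamma = \SL_{d+1}(\Z)$, $X = G/\Gamma$, and let $a_t = \diag(e^{t},\ldots,e^{t}, e^{-dt})$. The expanding horosphere of $a_t$ is the abelian group $U \simeq \R^d$ consisting of the unipotent upper-triangular matrices with entries $(\xi_1,\ldots,\xi_d)$ in the last column above the diagonal; write $u(\xi)$ for the element corresponding to $\xi \in \R^d$. The map $\xi \mapsto u(\xi)\Gamma$ thus identifies $\R^d$ with an orbit of $U$ through the identity coset, and the hypothesis that $\phi([0,1])$ is not contained in any affine hyperplane is equivalent, for an analytic curve, to non-degeneracy of $s \mapsto u(\phi(s))$ in the sense required by the main theorem.

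The second ingredient is the dynamical encoding of best approximations due to Shapira and Weiss. In \cite{ShapiraWeiss} a Borel cross-section $\Sigma \subset X$ for the flow $a_t$ is constructed, together with a continuous map $F : \Sigma \to X_d \times \R^d$, such that for Lebesgue-a.e.\ $\xi \in \R^d$ the orbit $(a_t u(\xi)\Gamma)_{t\geq 0}$ intersects $\Sigma$ precisely at times $t_k$ corresponding to the best approximations $v_k = (p_k, q_k)$ of $\xi$, with $F$ sending the $k$th intersection to the pair $([\Lambda_{v_k}], \disp(\xi, v_k))$. The measure $\mu_{d, \norm{\cdot}}$ of Theorem~\ref{thm: ShapiraWeiss} is the pushforward under $F$ of the normalized Palm (induced) probability measure on $\Sigma$.

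With these two inputs in hand, the proof is a two-step reduction. First, I would apply the main theorem of the paper to the non-degenerate analytic curve $s \mapsto u(\phi(s))$ in $U$ to conclude that for almost every $s \in [0,1]$ the point $u(\phi(s))\Gamma$ is Birkhoff generic for $a_t$ as $t \to \infty$. Second, I would convert this continuous-time genericity into equidistribution of the discrete return sequence to $\Sigma$ by a standard Ambrose--Kakutani / Kac-lemma argument, then push forward under $F$ to obtain equidistribution of $\{([\Lambda_{v_k}], \disp(\phi(s), v_k))\}$ with respect to $\mu_{d, \norm{\cdot}}$.

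The technical heart of the argument is the second step: converting Birkhoff averages of continuous observables along the flow into averages over returns to the non-compact cross-section $\Sigma$. One must verify that the boundary of $\Sigma$ carries no Palm mass (so that indicators of $\Sigma$ can be sandwiched between continuous test functions against which Birkhoff averages are controlled), that the return-time function is Palm-integrable (so Kac's lemma gives linear growth of the number of returns in $[0,T]$), and that the generic orbits considered cross $\Sigma$ transversally. All three properties are built into the cross-section of \cite{ShapiraWeiss}, and the verification transfers without modification to our setting: it uses only the qualitative Birkhoff genericity of $u(\phi(s))\Gamma$ under $a_t$, which is precisely what the main theorem supplies.
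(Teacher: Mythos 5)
Your proposal is correct and follows essentially the same route as the paper: apply Theorem~\ref{thm: main} to the non-degenerate curve $s \mapsto u(\phi(s))$ in the abelian horosphere of $a_t = \diag(\euler^t,\ldots,\euler^t,\euler^{-dt})$ to get Birkhoff genericity of $u(\phi(s))\Z^{d+1}$ for a.e.~$s$, then feed this into the Shapira--Weiss cross-section machinery (Jordan measurability and temperedness of the relevant subset $\mathcal{B}$ of the cross-section, equidistribution of visits for generic points, and the identification of visits with best approximations up to finitely many exceptions) and push forward to $X_d \times \R^d$. The paper's proof is exactly this two-step reduction, citing the same ingredients from \cite{ShapiraWeiss} that you describe in slightly different (Palm-measure/Kac) language.
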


A similar result can be obtained for local properties of the best approximations (e.g.~the statistics of $v_k \mod M$ for any $M \geq 1$).

\subsection{Expanded translates of analytic curves in horospheres}\label{sec: intro-exptranslates}

Before turning to our main results, we briefly recall existing work on expanding translates of curves in horospheres.
For concreteness, let $G = \SL_d(\R)$ and let $\{a_t\} < G$ be a diagonalizable subgroup.
The expanding horospherical subgroup is then given by
\begin{align*}
  G^+ =\{g \in G: a_t g a_{-t} \to 1 \text{ as } t \to -\infty\}.
\end{align*}
A long line of research has established variants of equidistribution of expanded pieces of $G^+$-orbits in $\rquot{G}{\Gamma}$ for $\Gamma < G$ a lattice.
Here, equidistribution may for example be deduced by Margulis' thickening technique as well as mixing and can be made effective; this idea originated in Margulis' thesis \cite{Margulis-Thesis} (see also \cite[Prop.~2.4.8]{KleinbockMargulis-bounded}).
Motivated by this, one may ask under what conditions translates of a curve
\begin{align*}
  \varphi: [0,1] \to G^+
\end{align*}
(or more generally a submanifold) would equidistribute \cite{ShahICM}.
In his influential work \cite{Shah-translates1},
Shah resolved this question when $a_t = \mathrm{diag}(\euler^{(d-1)t}, \euler^{-t}, \ldots, \euler^{-t})$ and when the curve $\varphi$ is analytic and is not contained in any proper subgroup of $G^+$ (see also \cite{Shah-translatesSO, Shah-translatesSO-2}).
Subsequently, Shah and L.~Yang \cite{ShahYang-translates} and finally P.~Yang \cite{Yang-translates} generalized Shah's result to all diagonalizable subgroups.
Given the objectives of the current article, we shall state P.~Yang's result \cite{Yang-translates} here in the case
\begin{align}\label{eq: a_t}
  a_t = \mathrm{diag}(\euler^{t/m}, \ldots, \euler^{t/m}, \euler^{-t/n}, \ldots, \euler^{-t/n})\in G
\end{align}
where $m+n=d$.
In this case, obstructions to equidistribution may be described via constraining pencils introduced by Aka, Breuillard, de Saxc\'e, and Rosenzweig \cite{ABRS}.
Let $P< G$ be the parabolic group given by
\begin{align}\label{eq: parabolic}
  P = \{g \in G: \lim_{t \to \infty}a_tg a_{-t} \text{ exists}\}
\end{align}
and note that we may identify $P\backslash G \simeq \Gr(m, d)$.

\begin{definition}
  For an integer $r \leq m$ and a proper subspace $W \subset \R^{d}$ the pencil $\mathfrak{P}_{W, r}$ is
  \begin{align*}
    \mathfrak{P}_{W, r} = \{V \in \Gr(m, d): \dim(V\cap W) \geq r\}.
  \end{align*}
  The pencil is constraining if $\frac{\dim(W)}{r} < \frac{d}{m}$ and weakly constraining if  $\frac{\dim(W)}{r} \leq \frac{d}{m}$.
\end{definition}

For example, when $m=1$ the pencil $\mathfrak{P}_{W,1}$ consists of all lines contained in $W$ and is always constraining.
If $m=n=2$ the pencil $\mathfrak{P}_{W, r}$ is weakly constraining if $\dim(W)=2$ and $r=1$, constraining if $\dim(W) \in \{1,2,3\}$ and $r=2$, and not constraining if $\dim(W) = 3$ and $r=1$.

\begin{definition}
  A partial flag subvariety of $P\backslash G$ (for $\{a_t\}$) is a subvariety of the form $PHg$ where $H < G$ is reductive group containing $\{a_t\}$ and where $g \in G$.
\end{definition}

\begin{theorem}[{P.~Yang \cite{Yang-translates}}]\label{thm: Yang}
  Let $\Gamma < G$ be a lattice.
  Let $\{a_t: t \in \R\}$ be as in \eqref{eq: a_t} and $\varphi: [0,1] \to G$ be an analytic curve whose image in $P\backslash G$ is not contained in a weakly constraining pencil or in a partial flag subvariety. Then for any $x \in G/\Gamma$ and any  $f\in C_c(G/\Gamma)$
  \begin{align*}
    \int_0^1 f(a_t \varphi(s)x) \de s\to \int_{G/\Gamma} f\de m_{G/\Gamma} \quad (t\to \infty).
  \end{align*}
\end{theorem}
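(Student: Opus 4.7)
The strategy I would adopt is the Ratner/linearization approach pioneered by Shah \cite{Shah-translates1} and developed for general diagonal $a_t$ in \cite{ShahYang-translates, Yang-translates}. Let $\mu_t$ denote the pushforward of Lebesgue measure on $[0,1]$ under the map $s \mapsto a_t \varphi(s) x$; the goal is to show $\mu_t \to m_{G/\Gamma}$ in the weak-$\ast$ topology as $t \to \infty$, and the first task is to guarantee that any weak-$\ast$ limit $\mu_\infty$ is a probability measure.

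\emph{Non-divergence.} I would establish tightness of $\{\mu_t\}$ via Kleinbock--Margulis type non-divergence. Writing $\varphi = \varphi^{-}\varphi^{0}\varphi^{+}$ by Bruhat and conjugating by $a_t$ shows that, up to a bounded twist and a fixed element of $G$, the trajectory $s \mapsto a_t\varphi(s)$ is modelled by an analytic curve in the abelian horospherical $U$ whose Taylor coefficients inherit a non-degeneracy coming from the no-pencil hypothesis. This curve is therefore $(C,\alpha)$-good, and the standard non-divergence estimate prevents escape of mass to the cusp of $G/\Gamma$.

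\emph{Extra invariance and Ratner.} To extract extra invariance of $\mu_\infty$ I would run the usual shearing argument: for small $\tau$, write $\varphi(s+\tau) = \varphi(s)\exp(\tau X(s) + O(\tau^2))$, conjugate by $a_t$, and choose $\tau = \tau(t) \to 0$ so that the dominant $a_t$-weight part of $\tau\, a_t X(s) a_{-t}$ remains bounded; in the limit this yields invariance of $\mu_\infty$ under a one-parameter unipotent subgroup of $U$. Iterating the argument with higher derivatives of $\varphi$, which by analyticity span a rich subspace along the curve, upgrades this to invariance under a unipotent $V \leq U$ adapted to $\varphi$. Ratner's measure classification then forces $\mu_\infty$ to be a convex combination of algebraic measures on periodic orbits of closed subgroups $H \leq G$ generated by their unipotent elements, each containing $V$.

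\emph{Ruling out $H \neq G$.} If $\mu_\infty \neq m_{G/\Gamma}$, some positive mass would live on an orbit $Hy$ with $H \neq G$. By the Dani--Margulis linearization procedure, this translates into a lower bound on the Lebesgue measure of those $s \in [0,1]$ for which $a_t \varphi(s) \xi_H$ is trapped in a small neighborhood of an $H$-stabilised subspace inside an auxiliary $G$-representation $V_H$. Decomposing $V_H$ along the weights of~\eqref{eq: a_t} and letting $t \to \infty$, the only way this can happen for a positive measure of $s$ is that the image of $\varphi$ in $P\backslash G \simeq \Gr(m,d)$ is contained in a proper subvariety of one of two types: a weakly constraining pencil $\mathfrak{P}_{W,r}$, corresponding to a subspace $W$ whose associated $m$-wedges lie in a non-expanding $a_t$-weight space (equivalently $\dim(W)/r \leq d/m$); or a partial flag subvariety, corresponding to reductive $H$ containing $\{a_t\}$. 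Both options are excluded by hypothesis, forcing $\mu_\infty = m_{G/\Gamma}$. The hardest part of the argument is precisely this last step: setting up the dictionary between the algebraic subgroups $H$ produced by Ratner's theorem and the geometric loci in $\Gr(m,d)$, through a careful weight analysis on the Plücker embedding of the partial flag variety. This is essentially the content of the ABRS framework \cite{ABRS} and constitutes the technical core of \cite{Yang-translates}.
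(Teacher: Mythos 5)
This theorem is not proved in the paper at all: it is quoted from Yang \cite{Yang-translates} (building on \cite{Shah-translates1, ShahYang-translates}), so there is no in-paper argument to compare against. Your outline does reproduce the strategy of the actual proof --- non-divergence, unipotent invariance of weak-$\ast$ limits, Ratner's measure classification, and Dani--Margulis linearization --- and it also mirrors the architecture this paper uses for its own Theorem~\ref{thm: main} in \S\ref{sec: nondivergence}--\S\ref{sec: avoid sets and proof}.

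As a proof, however, your text has genuine gaps, and they are all concentrated at the same place: the ``basic lemma'' of linear dynamics (recorded in this paper as Theorem~\ref{thm: Yang1}). In the non-divergence step, $(C,\alpha)$-goodness of $s \mapsto \norm{a_t\varphi(s)v}$ is only half of the Kleinbock--Margulis hypothesis; one must also rule out the degenerate alternative that some primitive sublattice $\Lambda$ has $\sup_{s}\norm{a_t\varphi(s)\Lambda}$ small uniformly in $s$ as $t \to \infty$, and this is precisely where the no-pencil/no-flag hypothesis enters via the basic lemma --- your sketch omits this entirely and asserts that goodness alone ``prevents escape of mass.'' In the linearization step, the claim that trapping of $a_t\varphi(s)\eta_H(\cdot)$ near the invariant locus for a positive measure of $s$ forces the curve into a weakly constraining pencil or a partial flag subvariety is again the basic lemma plus the weight analysis; you correctly identify it as the technical core but defer it wholesale to \cite{ABRS, Yang-translates}. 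Finally, in the shearing step the direction $Y_s$ obtained from $\varphi(s+\tau)\varphi(s)^{-1}$ varies with $s$, so a single weak-$\ast$ limit of $\int_0^1 \delta_{a_t\varphi(s)x}\,\de s$ is not literally invariant under one fixed one-parameter unipotent subgroup; one must either localize to short arcs and disintegrate, or renormalize by the centralizer as this paper does with $\rot(\cdot)$ in \eqref{eq: choice of z()}. None of these is a wrong turn, but each is a step that must actually be carried out before the argument closes, and together they constitute essentially all of the content of \cite{Yang-translates}.
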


The same conclusion was obtained in earlier work by Shah and L.~Yang \cite{ShahYang-translates} under a stronger assumption called supergenericity.
The conditions on $\varphi$ are not to be considered sharp:
Suppose that $m=1$ in which case $G^+ \simeq \R^{d-1}$ and the above conditions for $\varphi: [0,1] \to G^+$ translate to the image of $\varphi$ not being contained in any affine hyperplane; Theorem~\ref{thm: Yang} in this case is due to Shah \cite{Shah-translates1}.
For $d=3$ work by Shi and Weiss \cite{ShiWeiss} (with an additional average over $t$) and Kleinbock, de Saxc\'e, Shah, and P.~Yang \cite{KSSY} addresses conditions under which translates of planar lines equidistribute.
Similar results exist for $d>3$ \cite{ShahYang-hyperplanes}. In the case $m=1$, the analyticity assumption has been weakened recently by Shah and P.~Yang \cite{ShahYang-smooth}.

\subsection{Birkhoff genericity and the main result}
In this article, we establish that almost every point on a curve in an expanding horosphere is Birkhoff generic for the diagonalizable flow under suitable conditions.

In view of Birkhoff's ergodic theorem, one may ask whether for an ergodic measure preserving system $\mathsf{X} = (X, \mathcal{B}, \mu, T)$ the ergodic averages $\frac{1}{N}\sum_{n=1}^N f(T^nx)$ converge for almost every point $x$ when the point $x$ is sampled with respect to a different measure.
Assuming that $X$ is locally compact $\sigma$-compact, we say that a point $x$ is Birkhoff generic (with respect to $\mu$ and $T$) if
\begin{align*}
  \frac{1}{N}\sum_{n=1}^N f(T^nx) \to \int f \de \mu
\end{align*}
for any $f \in C_c(X)$. By the Birkhoff ergodic theorem, $\mu$-almost every point is Birkhoff generic with respect to $\mu$.
The above question for $X = \mathbb{T}= \R/\Z$ has been studied by various authors including Host \cite{Host-normal} and Hochman and Shmerkin \cite{HochmanShmerkin}. Host established that $\nu$-almost every point is Birkhoff generic for $\times 2$ when $\nu$ is a $\times 3$ invariant and ergodic measure $\nu$ on $\mathbb{T}$ of positive entropy.
For example, almost any point on the middle-third Cantor set is Birkhoff generic for $\times 2$ i.e.~normal in base $2$ \cite{Schmidt-normal, Cassels-normal}.

Now let $\{a_t: t\in \R\}$ be as in \eqref{eq: a_t} and let $G^+$ be the expanding horospherical subgroup.
Birkhoff genericity for fractal measures on $G^+$ is known in many cases -- see \cite{EinsiedlerFishmanShapira, SimmonsWeiss, ProhaskaSert, ProhaskaShiSert}.
For certain subgroups of $G^+$ (and for general diagonalizable subgroups), Shi \cite{shi2020pointwise} established Birkhoff genericity.
Moreover, Fraczek, Shi, and Ulcigrai \cite{FraczekShiUlcigrai} and Zhang \cite{Zhang} proved Birkhoff genericity for certain submanifolds of the full horosphere in $\SL_d(\R) \ltimes (\R^d)^k$ which project surjectively onto the full horosphere in $\SL_d(\R)$.
All of the works \cite{shi2020pointwise, FraczekShiUlcigrai, Zhang}
are based on previous work of Chaika and Eskin \cite{ChaikaEskinBirkhoffGeneric}.

In the current article, we establish the following Birkhoff genericity result.

\begin{theorem}[Birkhoff genericity along curves]\label{thm: main}
  Let $G = \SL_d(\R)$, $\Gamma = \SL_d(\Z)$, $X = G/\Gamma$, $\{a_t: t \in \R\}$ as in \eqref{eq: a_t}, and $P$ the associated parabolic subgroup as in \eqref{eq: parabolic}.
  Let $\varphi:[0,1] \to G$ be an analytic curve whose image in $P\backslash G$ is not contained in any weakly constraining pencil or in any partial flag subvariety. Then for any $x \in X$ and for almost every $ s\in [0,1]$ the point $\varphi(s)x$ is Birkhoff generic with respect to the Haar probability measure on $X$ and the flow $\{a_t: t >0\}$ i.e.~for every $f\in C_c(X)$
  \begin{align*}
    \frac{1}{T} \int_0^T f(a_t \varphi(s)x) \de t \to \int_X f \de m_X.
  \end{align*}
\end{theorem}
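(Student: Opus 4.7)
The plan is to deduce Birkhoff genericity from P.~Yang's equidistribution result (Theorem~\ref{thm: Yang}) via the variance method pioneered in the work of Chaika and Eskin cited in the introduction. Fix a countable dense family $\{f_j\} \subset C_c(X)$; by a standard argument it suffices to show that for each $f \in \{f_j\}$ and almost every $s \in [0,1]$,
\begin{equation*}
  \tfrac{1}{T_k}\int_0^{T_k} f(a_t \varphi(s) x)\,\de t \longrightarrow \int_X f\,\de m_X
\end{equation*}
along a geometric subsequence $T_k = \euler^{k}$, and then to upgrade this to convergence along all $T \to \infty$ by monotone comparison after replacing $f$ by $f + \|f\|_\infty$. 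Since $X$ is non-compact, one also needs a non-escape of mass statement along the orbits $\{a_t\varphi(s)x\}$, which I would obtain by combining Theorem~\ref{thm: Yang} applied to cutoff functions in the cusp with the Kleinbock--Margulis non-divergence estimates for polynomial-like curves; this reduces the problem to bounded test functions effectively supported in a fixed compact subset of $X$.

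The core estimate is a variance bound for the deviation $B_T(s) = \tfrac{1}{T}\int_0^T f(a_t\varphi(s)x)\,\de t - \int f\,\de m_X$. Expanding the square yields
\begin{equation*}
  \int_0^1 |B_T(s)|^2\,\de s \;\leq\; \frac{2}{T^2}\int_{0\le t_1\le t_2\le T}\!\!\Psi(t_1,t_2)\,\de t_1\,\de t_2 + o(1),
\end{equation*}
where $\Psi(t_1, t_2) = \int_0^1 f(a_{t_1}\varphi(s)x)\,f(a_{t_2}\varphi(s)x)\,\de s - \bigl(\int f\,\de m_X\bigr)^2$. Setting $\tau = t_2 - t_1$ and $F_\tau(y) = f(y)\,f(a_\tau y)$, the inner integral equals $\int_0^1 F_\tau(a_{t_1}\varphi(s)x)\,\de s - (\int f \de m_X)^2$. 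For each fixed $\tau$, Theorem~\ref{thm: Yang} makes this expression tend as $t_1\to\infty$ to $\int_X F_\tau\,\de m_X - (\int f\,\de m_X)^2$, which in turn tends to $0$ as $\tau\to\infty$ by the Howe--Moore mixing theorem for the $\{a_t\}$-action on $X$. Converting this pair of limits into a usable variance estimate requires uniformity in $\tau$; I would obtain this by truncating $F_\tau$ to a fixed compact subset of $X$ using the non-escape input, approximating uniformly by a precompact family of compactly supported continuous functions, and then invoking Theorem~\ref{thm: Yang} for this finite family. Once $\int_0^1|B_T|^2\,\de s \to 0$ with a rate summable along $T_k$, Chebyshev's inequality and Borel--Cantelli give almost sure convergence along $T_k$, which then upgrades to all $T$ by monotonicity.

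The main obstacle I foresee is precisely in making the uniformity in $\tau$ quantitative enough so that the resulting variance is summable along $T_k$; it is conceivable that a smoothing in the diagonal region $|t_1-t_2|\le R$ plus a separate treatment for large $\tau$ is required. An attractive alternative, which I would pursue in parallel, is to lift the variance question to $X\times X$: the pair $\bigl(a_{t_1}\varphi(s)x, a_{t_2}\varphi(s)x\bigr)$ is the trajectory of the diagonally embedded curve $s\mapsto(\varphi(s),\varphi(s))\cdot(x,x)$ under the two-parameter diagonalizable flow $(a_{t_1},a_{t_2})$, and a $G\times G$-analogue of Theorem~\ref{thm: Yang} would immediately supply the desired joint decorrelation. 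The delicate point in this second approach is that the diagonal curve lies entirely inside the diagonal subgroup of $G\times G$, which is itself reductive and contains the diagonally embedded $\{a_t\}$; one therefore has to recast the pencil and partial flag conditions in this degenerate product setting and to check them directly from the non-degeneracy assumptions on $\varphi$. This structural check is the hardest step I anticipate in the whole argument.
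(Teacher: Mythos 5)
Your proposal is a second-moment (Chaika--Eskin style) argument, which is genuinely different from what the paper does, but both of the routes you sketch have gaps that are not technical footnotes --- they are the entire difficulty of the problem. First, the uniformity in $\tau$ you need cannot be obtained by the compactness argument you suggest: the family $F_\tau = f\cdot(f\circ a_\tau)$ is uniformly bounded and uniformly compactly supported, but it is not equicontinuous, since $a_\tau$ expands the horospherical directions and the modulus of continuity of $f\circ a_\tau$ degenerates as $\tau\to\infty$; hence $\{F_\tau\}_{\tau>R}$ is not precompact in $C_c(X)$, and Theorem~\ref{thm: Yang} (a qualitative, Ratner-based statement with no rate) cannot be applied uniformly over it. Second, even granting $\int_0^1|B_T|^2\,ds\to 0$, the absence of any rate is fatal for the Chebyshev--Borel--Cantelli step: summability of the variances forces a subsequence $T_k$ that may grow arbitrarily fast, while the monotonicity upgrade requires $T_{k+1}/T_k\to 1$; without effective decay of the variance one cannot have both. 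This is exactly why Chaika--Eskin and Shi can run this argument --- they start from measures admitting an effective equidistribution statement (full horospherical pieces, via mixing) --- whereas here the initial measure lives on a measure-zero curve for which only the ineffective Theorem~\ref{thm: Yang} is available. Your fallback via $G\times G$ does not repair this: the diagonal curve $s\mapsto(\varphi(s),\varphi(s))$ lies inside the diagonal copy of $G$, a proper reductive subgroup containing the relevant diagonalizable flow, i.e., precisely the degenerate ``partial flag subvariety'' situation excluded by Theorem~\ref{thm: Yang}; the limits of its translates under $(a_{t_1},a_{t_2})$ are not Haar on $X\times X$ and depend on $t_2-t_1$, so this is an equidistribution problem at least as hard as the theorem you are trying to prove.

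The paper avoids second moments altogether. For almost every $s$ it shows that every weak${}^*$ limit of the empirical measures $\mu_{s,T}$ is invariant under the unipotent one-parameter group $x\mapsto\exp(xY_s)$ tangent to the expanded curve (Theorem~\ref{thm: unipotent inv}, proved by a martingale argument rather than by correlation estimates); it establishes non-escape of mass (Theorem~\ref{thm: nondiv}); and it shows the limits give zero mass to the singular sets $N(W,H)$ via the Dani--Margulis linearization, combining Proposition~\ref{prop: excep int} (long intervals: quantitative non-divergence plus the linear-dynamics ``basic lemma'') with the Short Intervals Lemma~\ref{lem: martingale interval lemma} (an Azuma-type estimate). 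Ratner's theorem then identifies the limit as $m_X$. If you want to salvage a variance approach, you would first need an effective version of Theorem~\ref{thm: Yang} with a quantitative rate; no such statement is currently available.
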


\begin{remark}
  The non-degeneracy assumptions on the curve $\varphi$ in the case $m=1$ in either of the above results translates to assuming that the image of $\varphi$ in $G/P\simeq \mathbb{P}^d$ is not contained in any proper subspace.
\end{remark}

\begin{remark}
  We do not expect the conditions in either of the above theorems to be sharp and refer to \S\ref{sec: intro-further} for a discussion.
  In the upcoming work of Shah and P.~Yang Theorem~\ref{thm: main} for degenerate curves is established in some important cases.
\end{remark}

\begin{remark}
  The lattice $\SL_d(\Z)$ may be replaced by a congruence lattice $\Gamma< \SL_d(\Z)$ in the above without difficulty.
\end{remark}

\textbf{Acknowledgments:}
The authors would like to express their gratitude to Barak Weiss for suggesting this fascinating topic. 
We are also deeply thankful to Elon Lindenstrauss for insightful conversations and for generously sharing his ideas. We greatly appreciate the helpful comments from the anonymous referees. Finally, we would like to thank Roland Prohaska and Pengyu Yang for valuable discussions related to this work.

\subsection{Ingredients of the proof}\label{sec: ingredients}
The proof consists of roughly four main ingredients, the first three not being novel:
\begin{itemize}
  \item \textbf{Establishing unipotent invariance:} This is a general tool, first introduced in \cite[Prop.~3.1]{ChaikaEskinBirkhoffGeneric} and explored more in \cite[Prop.~2.2]{shi2020pointwise}. It shows that limits of Birkhoff averages starting at a suitable expanding set are invariant under unipotent directions, which are determined from local information of the starting point. Both \cite{ChaikaEskinBirkhoffGeneric, shi2020pointwise} use effective estimates of correlations to prove this unipotent invariance. 
  We rephrase the proof with martingales, as this is a recurring theme in the paper.
  \item \textbf{Linearization:}
        Provided that a measure $\mu$ is invariant under a unipotent subgroup, Ratner's measure classification \cite{ratner91-measure}
        implies that the ergodic components are of algebraic nature. Linearization is a general technique, introduced by Dani and Margulis \cite{DaniMargulis93}, transforming the problem of showing that $\mu$ is the Haar measure to countably many algebraic-like problems. These problems can be summarized as showing that ``some vectors do not spend a lot of time in a certain set, with high probability.''
        For the purposes of this overview, we omit discussing here the appearance of the (related) quantitative non-divergence techniques of Kleinbock and Margulis \cite{KleinbockMargulis98}.
  \item \textbf{Linear Dynamics:} The linearization technique requires that one rules out (among other things) the existence of a nonzero vector $v \in \R^d$ such that $\sup_{s \in [0,1]}\norm{a_t \varphi(s) v} \to 0$.
        Statements of this flavor have been established in the works of Shah \cite{Shah-translates1}, Shah, L.~Yang \cite{ShahYang-translates} and Yang \cite{Yang-translates} (usually called 'basic lemma') in various instances.
\end{itemize}
\textbf{Interlude, Combinatorial structure of the problems:}
For the sake of this introduction, we provide a combinatorial structure of the problems the linearization technique yields.
Let $\rho: \SL_d \to \SL_N$ be a representation defined over $\Q$.
For each time $t$ we have a collection $A_t$ of disjoint intervals $I\subset [0, 1]$, each of which is a connected component of a set of the form $\{s\in [0,1]: \norm{\rho(a_t\varphi(s))v_I}\leq 1\}$ for $v_I \in \Z^N$ nonzero.
For $\varepsilon>0$ let
\begin{align*}
  \mathrm{Bad}_t(\varepsilon) = \bigcup_{I \in A_t} \{s \in I: \norm{\rho(a_t\varphi(s))v_I}\leq \varepsilon\}
\end{align*}
The goal is to bound the amount of $t\in [0, T]$ for which $s\in \mathrm{Bad}_t(\varepsilon)$ in terms of $\varepsilon$ almost surely for $s\in [0,1]$.
\begin{itemize}
  \item \textbf{Interval combinatorics:}
        We first employ an argument by Kleinbock and Margulis \cite{KleinbockMargulis98} in their work on quantitative non-divergence, and obtain a bound on the total volume of exceptionally long intervals, that is,
        $$\left|\bigcup_{\substack{I\in A_t\\|I|>\euler^{-rt + t'}}}I\right| \ll \euler^{-\delta t'},$$
        for some $\delta>0$ and $t'< \delta t$.
        By the Borel–Cantelli lemma one may hence ignore the intervals $\{I\in A_t:|I|>\euler^{-rt + \sqrt t}\}$ (from now on these are called \emph{long intervals}). The Short Intervals Lemma \ref{lem: martingale interval lemma} is used to bound the rest (these are called \emph{short intervals}).
        The Short Intervals Lemma takes the place of Margulis functions, which are used for instance in \cite{ChaikaEskinBirkhoffGeneric, shi2020pointwise}.
\end{itemize}
\subsection{Further research}\label{sec: intro-further}
One can try to extend the setting of the problem into a more general one:
\begin{problem}
Extend Theorems \ref{thm: main} to non-abelian horospheres (cf.~Yang \cite{Yang-translates}).
\end{problem}
One obstruction in using our technique occurs when ruling out long intervals (Proposition~\ref{prop: excep int}).
For instance, when $G^+$ is abelian, a curve $s \mapsto a_t \varphi(s)v$ through a vector $v \in\R^d$ can be described in a small neighborhood of a point $s_0 \in [0,1]$ through a unipotent curve $\exp(x Y)w$ with $Y$ nilpotent and $w \in \R^d$. The same is not true when the horosphere is non-abelian.

Similarly, one may ask for the following extension:

\begin{problem}
Extend Theorems~\ref{thm: main} to smooth curves (cf.~Shah and Yang \cite{ShahYang-smooth}).
\end{problem}

In the spirit of Shi's work \cite{shi2020pointwise} (see also \cite[Thm.~1.10]{Shah-translates1}) we pose the following problem:

\begin{problem}
Let $\rho: G=\SL_d(\R) \to L$ be a representation of Lie groups and let $\Gamma< L$ be a lattice.
Let $\varphi:[0,1] \to G$ be an analytic curve whose image in $P\backslash G$ is not contained in any weakly constraining pencil or in any partial flag subvariety.
Let $x \in L/\Gamma$.
Show that for almost every $s \in [0,1]$ and any $f \in C_c(L/\Gamma)$
\begin{align*}
  \frac{1}{T}\int_0^T f(\rho(a_t\varphi(s))x) \de t \to \int f \de m_{\overline{\rho(G)x}}
\end{align*}
where $m_{\overline{\rho(G)x}}$ is the invariant probability measure on the orbit closure of $x$.
\end{problem}

In the spirit of \cite{EinsiedlerFishmanShapira, SimmonsWeiss,
  ProhaskaSert, ProhaskaShiSert} one may ask the following:

\begin{problem}
Can one prove Theorem \ref{thm: main} for friendly measures (see \cite{KleinbockLindenstraussWeiss}), in place of the uniform measure on a curve?
\end{problem}

One could also consider non-conventional ergodic averages (see e.g.~the work of Bourgain \cite{Bourgain1, Bourgain2} for polynomial averages):

\begin{problem}
Let $\{t_n\}$ be a sequence of integers.
Determine sufficient conditions on the curve $\varphi$ and the sequence $\{t_n\}$ so that for all $x \in X$ and for almost every $s \in [0,1]$
\begin{align*}
  \frac{1}{N}\sum_{n=1}^N f(a_{t_n}\varphi(s)x) \to \int_X f \de m_X
\end{align*}
for all $f \in C_c(X)$.
\end{problem}

A different type of questions lies in changing the methods of our proof:
\begin{problem}
Can one extend the argument given in \S\ref{sec: unipotent inv} to prove invariance in more directions, not only the tangent one?
\end{problem}
This would nullify the need to use the linearization technique, though one would still have to prove non-escape of mass.

\subsection{Structure of the article}
In \S\ref{sec: notations and preliminaries} we define the notations that we use and introduce preliminary tools on $(C, \alpha)$-good functions.
In \S\ref{sec: long inter} we use elementary algebraic manipulations combined with quantitative non-divergence \cite{KleinbockMargulis98} and results in linear dynamics \cite{Shah-translates1, Yang-translates} to control the `long intervals' in the sense of \S\ref{sec: ingredients}. In \S\ref{sec: short intervals lemma} we prove a completely probabilistic lemma which will help us control the `short intervals'.
In \S\ref{sec: nondivergence} we combine the previous tools and give a demonstration of the method, proving a (partially quantitative) nondivergence result.
In \S\ref{sec: unipotent inv} establish unipotent invariance of limits the Birkhoff averages. It is independent of the previous sections, and it uses probabilistic methods similar to those in \S\ref{sec: short intervals lemma}.
In \S\ref{sec: avoid sets and proof} we combine the previous results with Ratner's measure classification \cite{ratner91-measure} and the linearization technique \cite{DaniMargulis93} to prove Theorem \ref{thm: main}.
Finally, in \S\ref{sec: applications} we establish the applications in \S\ref{sec: dirichletimprovability}, \ref{sec: best approx}.

\section{Notation and preliminaries}
\label{sec: notations and preliminaries}
As in Theorem~\ref{thm: main}, $\{a_t: t \in \R\}$ shall be the (real) diagonalizable subgroup of $G = \SL_d(\R)$, $d \geq 3$ defined in \eqref{eq: a_t}.
Let $P$ be the parabolic subgroup in \eqref{eq: parabolic} and let $\mathfrak{p}$ be its Lie algebra.

Let $Z = Z(\{a_t\})<P$ be the centralizer subgroup of $\{a_t\}$.
Note that $Z$ acts by conjugation on the expanded horospherical subgroup
\begin{align*}
  G^+ = \{g \in G: a_t g a_{-t} \to \id \text{ as } t \to -\infty\}.
\end{align*}
For $\{a_t\}$ as above, one may identify
$G^+ \simeq \Lie(G^+) = \Fg^+ \simeq \Mat_{mn}(\R)$ and
\begin{align*}
  Z \simeq & (\GL_{m}(\R) \times \GL_n(\R))^1                                               \\
           & \quad:= \{(g_1, g_2)\in \GL_{m}(\R) \times \GL_n(\R): \det(g_1)\det(g_2) = 1\}
\end{align*}
where the action of $Z$ on $G^+$ corresponds to the action of $(\GL_{m}(\R) \times \GL_n(\R))^1$ on $\Mat_{mn}(\R)$ via $(g_1, g_2).X = g_1 Xg_2^{-1}$.

\begin{example}
  If $a_t = (\euler^{(d-1)t}, \euler^{-t}, \ldots, \euler^{-t})$ then $G^+ \simeq \R^{d-1}$, $Z \simeq \GL_{d-1}(\R)$, and the action of $Z$ on $G^+$ corresponds to the (non-standard) action of $\GL_{d-1}(\R)$ on $\R^{d-1}$ via $v \mapsto g.v = \det(g)^{-1}(g^{-1})^t v$.
\end{example}

\begin{lemma}\label{lem: centralizer Ad-orbits}
  There are real subvarieties $V_0 \subset V_1 \subset \ldots \subset V_{\min\{m, n\}}= \mathfrak{g}^+$ so that each $V_k$ is invariant under $Z$ and the action of $Z$ on $V_k \setminus V_{k-1}$ has at most two orbits both of which are open in $V_k$ (for the Hausdorff topology).
\end{lemma}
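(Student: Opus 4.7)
The plan is to stratify $\mathfrak{g}^+ \simeq \Mat_{mn}(\R)$ by matrix rank. I would set
\[ V_k = \{X \in \Mat_{mn}(\R) : \mathrm{rank}(X) \leq k\} \quad \text{for } 0 \leq k \leq \min(m,n), \]
which is a real algebraic subvariety (the vanishing locus of all $(k+1)\times(k+1)$ minors). These are nested with $V_{\min(m,n)} = \mathfrak{g}^+$, and each $V_k$ is $Z$-invariant because the action $(g_1,g_2)\cdot X = g_1 X g_2^{-1}$ of the whole of $\GL_m(\R)\times\GL_n(\R)$ preserves rank. The stratum $V_k \setminus V_{k-1}$ (with $V_{-1} := \emptyset$) is precisely the set of rank-exactly-$k$ matrices, and it is open in $V_k$ as the complement of the closed subvariety $V_{k-1}$.

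Next, the orbit structure follows from Smith normal form over $\R$: the rank-$k$ matrices form a single $\GL_m(\R) \times \GL_n(\R)$-orbit, with representative $E_k = \begin{pmatrix} I_k & 0 \\ 0 & 0 \end{pmatrix}$. Since $Z$ is the kernel of the character $\chi: (g_1, g_2) \mapsto \det(g_1)\det(g_2)$, inducing an isomorphism $(\GL_m(\R) \times \GL_n(\R))/Z \cong \R^*$, a standard coset argument identifies the number of $Z$-orbits on the rank-$k$ stratum with the index $[\R^* : \chi(\mathrm{Stab}(E_k))]$.

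The main technical step is the block-matrix computation of this stabilizer. Writing $g_1$ in $(k, m-k)$ blocks and $g_2$ in $(k, n-k)$ blocks, the equation $g_1 E_k = E_k g_2$ forces
\[ g_1 = \begin{pmatrix} A & B \\ 0 & D \end{pmatrix}, \quad g_2 = \begin{pmatrix} A & 0 \\ C' & D' \end{pmatrix}, \]
with the same $A \in \GL_k(\R)$ appearing in both, with $D \in \GL_{m-k}(\R)$ and $D' \in \GL_{n-k}(\R)$ whenever those dimensions are positive, and with arbitrary off-diagonal blocks $B, C'$. Hence $\chi(g_1, g_2) = \det(A)^2 \det(D)\det(D')$. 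The case analysis is then immediate: whenever $k < \max(m,n)$, at least one of $D, D'$ is present and $\chi$ surjects onto $\R^*$, giving a single $Z$-orbit. The sole exception is $k = m = n$, where $\chi(g_1, g_2) = \det(A)^2 \in \R_{>0}$; here there are exactly two $Z$-orbits, distinguished by $\mathrm{sgn}(\det X)$, which is $Z$-invariant since $\det(g_1 X g_2^{-1}) = \det(g_1)^2 \det(X)$ when $\chi = 1$. Openness of the orbits in $V_k$ follows because $Z$ and $\GL_m(\R) \times \GL_n(\R)$ share the same Lie algebra, so each $Z$-orbit has the same dimension as the full stratum.

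I do not anticipate a serious obstacle. The only subtlety is the square top-rank case $k = m = n$, where restricting to determinant one removes the ability to change $\mathrm{sgn}(\det X)$; in every other case the overall scaling freedom is absorbed into the free blocks $D$ or $D'$ of the stabilizer.
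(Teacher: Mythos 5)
Your proposal follows the same route as the paper: stratify $\mathfrak{g}^+\simeq\Mat_{mn}(\R)$ by rank and reduce to the rank normal form. Where the paper passes to the section $S_k$ of matrices supported in the top-left $k\times k$ block and observes that $(\GL_k(\R)\times\GL_k(\R))^1$ acts transitively on each sign-of-determinant class, you count orbits via the index $[\R^*:\chi(\Stab(E_k))]$, using normality of $Z$. Your block computation of the stabilizer and the resulting case analysis are correct, and in fact recover the sharper statement the paper relegates to a footnote (transitivity for $k<\max\{m,n\}$, exactly two orbits for $k=m=n$). So the orbit count is fine and, if anything, more systematic than the paper's.

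There is, however, one false assertion in your openness step: $Z=\ker\chi$ has codimension one in $\GL_m(\R)\times\GL_n(\R)$, so the two groups do \emph{not} share the same Lie algebra. The conclusion that the $Z$-orbits are open in the stratum is still true, but it needs a different justification. The cleanest fix along your lines: for $k\ge 1$ the element $(X_1,X_2)$ of $\Lie(\Stab(E_k))$ with $A$-block equal to $I_k$ and all other blocks zero satisfies $\mathrm{d}\chi(X_1,X_2)=\operatorname{tr}(X_1)+\operatorname{tr}(X_2)=2k\neq 0$, so $\Lie(\Stab(E_k))\not\subset\Lie(Z)$; since $\Lie(Z)$ has codimension one, $\Lie(Z)+\Lie(\Stab(E_k))$ is the full Lie algebra, hence the $Z$-orbit of $E_k$ has the same tangent space as the full stratum and is therefore open in it (and the stratum is open in $V_k$). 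Alternatively, note that the at most two $Z$-orbits partition the stratum and are permuted transitively by the full group ($Z$ being normal), so they all have the same dimension; a manifold cannot be covered by finitely many orbits of strictly smaller dimension, so each is open. Either repair is routine, but as written the openness step rests on an incorrect premise.
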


\begin{proof}
  Under the identification $\mathfrak{g}^+ \simeq \Mat_{mn}(\R)$ the action of $Z$ preserves the rank.
  For each $k \leq \min\{m, n\}$ let $V_k \subset \mathfrak{g}^+$ be the subvariety of matrices of rank at most $k$.
  Fix a rank $k \leq \min\{m, n\}$ and consider the action of $Z$ on $V_k \setminus V_{k-1}$; we prove that it has at most two orbits\footnote{A more careful argument show that the action of $Z$ on $V_k\setminus V_{k-1}$ is transitive if $k < \max\{m, n\}$ and has two orbits if $k=m=n$.}.
  A section for this action is given by
  \begin{align*}
    S_k = \left\{ \begin{pmatrix}
                    A & 0_{k, n-k} \\ 0_{m-k, k} & 0_{m-k, n-k}
                  \end{pmatrix}: A \in \GL_{k}(\R)\right\} \subset V_k
  \end{align*}
  where a subgroup of $Z$ identifiable with $(\GL_k(\R)\times \GL_k(\R))^1$ still acts on $S_k$ (in the natural manner). This new action is transitive on matrices $A$ whose determinants share the same sign.
\end{proof}

We let $\varphi:[0,1] \to G$ be an analytic curve.
We shall often assume that the image of $\varphi$ in $P\backslash G$ is not contained in any weakly constraining pencil.
Equivalently, the image of $\varphi$ in $P\backslash G$ is not contained in any weakly unstable Schubert variety \cite[Props.~6.4, 6.5]{Yang-translates}.

In the following we shall always write $r = \frac{1}{m}+ \frac{1}{n} >0$ for the expansion rate of $G^+$ (that is, the eigenvalue of $\Ad(a_t)$ acting on $\Fg^+$ is $\euler^{rt}$).
We define $\Phi^+(s, \xi)\in \Fg^+$ and $\Phi^{0-}(s, \xi)\in \mathfrak{p}$ for $s \in [0,1]$ and $\xi$ sufficiently small through
\begin{align}\label{eq:defPhi0-,+}
  \varphi(s+\xi)\varphi(s)^{-1} = \exp(\Phi^{0-}(s, \xi))\exp(\Phi^{+}(s, \xi)).
\end{align}
We remark that $\Phi^{0-}(\cdot,\cdot)$ and $\Phi^{+}(\cdot,\cdot)$ are analytic in both variables.
Moreover, we have $\Phi^{0-}(s, \xi) = O(\xi)$ and
\begin{align}\label{eq:defYs}
  \Phi^{+}(s,\xi) = \xi Y_s + O(\xi^2)
\end{align}
for some analytic curve $s \in [0,1] \to Y_s$.

\begin{lemma}\label{lem:Yneq0}
  If the image of $\varphi$ in $P\backslash G$ is non-constant, the curve $s \mapsto Y_s$ is non-zero at all but finitely many points.
\end{lemma}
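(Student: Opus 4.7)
The plan is to recognize $Y_s$ as the velocity, in a natural chart on $P\backslash G$, of the analytic curve $s \mapsto P\varphi(s)$, and then invoke real analyticity to conclude that the zero set is finite.

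First I would project the defining identity \eqref{eq:defPhi0-,+} to $P\backslash G$. Since $\exp(\Phi^{0-}(s,\xi)) \in P$, this projection kills the first factor and gives
\begin{align*}
P\varphi(s+\xi) = P\exp(\Phi^{+}(s,\xi))\,\varphi(s).
\end{align*}
The direct sum decomposition $\mathfrak{g} = \mathfrak{p} \oplus \Fg^+$ implies that the map $Z \in \Fg^+ \mapsto P\exp(Z)\varphi(s)$ is a local diffeomorphism from a neighborhood of $0$ onto a neighborhood of $P\varphi(s)$ in $P\backslash G$. In this chart the curve $\xi \mapsto P\varphi(s+\xi)$ is represented by $\xi \mapsto \Phi^{+}(s,\xi)$, whose velocity at $\xi = 0$ is $Y_s$ by \eqref{eq:defYs}. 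Consequently, $Y_s = 0$ exactly when the analytic curve $s \mapsto P\varphi(s)$ has vanishing derivative at $s$.

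Next, since $s \mapsto Y_s$ records the velocity of an analytic curve in $P\backslash G$ in a real-analytic chart, it is an analytic $\Fg^+$-valued function on $[0,1]$. If it vanished identically, the curve $s \mapsto P\varphi(s)$ would have zero derivative everywhere on $[0,1]$ and hence be constant, contradicting the hypothesis. Therefore $Y_s$ is not identically zero, and by analyticity its zero set is a discrete (hence finite) subset of the compact interval $[0,1]$.

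I do not anticipate any serious obstacle here. The only substantive point is that the $\exp(\Phi^{0-}(s,\xi))$ factor drops out on the left-coset side, which is built into the decomposition \eqref{eq:defPhi0-,+}; after that, the conclusion is a routine application of the identity theorem for real-analytic functions.
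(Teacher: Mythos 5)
Your proof is correct and takes essentially the same approach as the paper: identify $Y_s$ with the velocity of the analytic curve $s \mapsto P\varphi(s)$ (using that the $\exp(\Phi^{0-})$ factor is absorbed into the coset $P$) and then apply the identity theorem for real-analytic functions. The paper phrases this as a contradiction argument -- assuming $Y_s \equiv 0$ forces $P\varphi$ to be constant -- which is just the contrapositive of your final step.
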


\begin{proof}
  As $s \mapsto Y_s$ is analytic, we may assume by contradiction that $Y_s=0$ for all $s \in [0,1]$. Then for $\xi$ small the distance between $P\varphi(s)$ and $P\varphi(s+\xi)$ is of order $O(\xi^2)$ as
  \begin{align*}
    P\varphi(s+\xi) = P(I+O(\xi^2))\varphi(s) =P\varphi(s)(I+O(\xi^2))
  \end{align*}
  by \eqref{eq:defYs}. In particular, the derivative of $s \mapsto P\varphi(s)$ is zero everywhere and hence $s \mapsto P\varphi(s)$ is constant.
\end{proof}

The following lemma is a direct consequence of \eqref{eq:defPhi0-,+} and \eqref{eq:defYs}.

\begin{lemma}\label{lem: directionalasymp}
  For any $\xi \in \R$ sufficiently small and all $t >0$ we have
  \begin{align*}
    a_t \varphi(s+\xi) = (I+O(\xi+\xi^2\euler^{rt}))\exp(\xi \euler^{rt}Y_s)a_t \varphi(s).
  \end{align*}
  In particular, the distance between $a_t \varphi(s+\xi)$ and $a_t \varphi(s)$ is of order $O(\xi\euler^{rt})$.
\end{lemma}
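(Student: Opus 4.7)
The plan is to unpack the product decomposition \eqref{eq:defPhi0-,+}, push the factors through $a_t$ via the adjoint action, and then use that $\mathfrak{g}^+$ is abelian (for our $a_t$ from \eqref{eq: a_t}) to factor off the error from the main term $\exp(\xi\euler^{rt}Y_s)$.

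First, starting from $\varphi(s+\xi)\varphi(s)^{-1}=\exp(\Phi^{0-}(s,\xi))\exp(\Phi^{+}(s,\xi))$, I would write
\begin{align*}
  a_t\varphi(s+\xi)\varphi(s)^{-1}a_t^{-1}
  =\exp(\Ad(a_t)\Phi^{0-}(s,\xi))\,\exp(\Ad(a_t)\Phi^{+}(s,\xi))
\end{align*}
and then estimate each exponential separately. On $\mathfrak{p}$, the operator $\Ad(a_t)$ is uniformly bounded for $t\geq0$ (this is precisely the defining property of the parabolic $P$ in \eqref{eq: parabolic}, and concretely, $\Ad(a_t)$ is trivial on $\mathrm{Lie}(Z)$ and contracts $\mathfrak{g}^-$ by $\euler^{-rt}$). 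Combined with $\Phi^{0-}(s,\xi)=O(\xi)$, this gives $\|\Ad(a_t)\Phi^{0-}(s,\xi)\|=O(\xi)$, so by the usual Taylor bound $\exp(Z)=I+O(\|Z\|)$ for $\|Z\|$ bounded we obtain
\begin{align*}
  \exp(\Ad(a_t)\Phi^{0-}(s,\xi)) = I+O(\xi).
\end{align*}

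Second, on $\mathfrak{g}^+$ the operator $\Ad(a_t)$ acts by the scalar $\euler^{rt}$, so using \eqref{eq:defYs},
\begin{align*}
  \Ad(a_t)\Phi^{+}(s,\xi) = \xi\euler^{rt}Y_s+O(\xi^2\euler^{rt}).
\end{align*}
Here comes the key point: for $a_t$ as in \eqref{eq: a_t}, the horospherical Lie algebra $\mathfrak{g}^+\simeq\mathrm{Mat}_{mn}(\R)$ is abelian, so $\exp$ is a group homomorphism on it and elements of $\exp(\mathfrak{g}^+)$ commute. The $O(\xi^2)$ remainder in $\Phi^{+}(s,\xi)$ lies in $\mathfrak{g}^+$ (by definition), hence
\begin{align*}
  \exp(\Ad(a_t)\Phi^{+}(s,\xi))
  = \exp(O(\xi^2\euler^{rt}))\exp(\xi\euler^{rt}Y_s)
  = (I+O(\xi^2\euler^{rt}))\exp(\xi\euler^{rt}Y_s),
\end{align*}
provided $\xi^2\euler^{rt}$ is bounded (which is the regime in which the claimed estimate has content — otherwise the conclusion is vacuous).

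Third, combining the two estimates multiplicatively yields
\begin{align*}
  a_t\varphi(s+\xi)\varphi(s)^{-1}a_t^{-1}
  = (I+O(\xi))(I+O(\xi^2\euler^{rt}))\exp(\xi\euler^{rt}Y_s)
  = (I+O(\xi+\xi^2\euler^{rt}))\exp(\xi\euler^{rt}Y_s),
\end{align*}
which gives the displayed identity after multiplying both sides by $a_t\varphi(s)$ on the right. The ``in particular'' claim then follows because $\|\exp(\xi\euler^{rt}Y_s)-I\|=O(\xi\euler^{rt})$ by the Taylor bound, and for $\xi\leq1$ and $t\geq0$ both $\xi$ and $\xi^2\euler^{rt}$ are at most $\xi\euler^{rt}$, so the prefactor contributes an error of the same order.

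There is no real obstacle here; the only item requiring care is keeping track of which factor commutes with which, and this is handled by placing $\Phi^{0-}$ (the $\mathfrak{p}$-part) on the left and $\Phi^{+}$ (the $\mathfrak{g}^+$-part, abelian) on the right, so that the rearrangement $\exp(A+B)=\exp(A)\exp(B)=\exp(B)\exp(A)$ on $\mathfrak{g}^+$ can be invoked without BCH remainders.
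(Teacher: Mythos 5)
Your proof is correct and is essentially the argument the paper intends (the paper records this lemma as a ``direct consequence'' of \eqref{eq:defPhi0-,+} and \eqref{eq:defYs} without writing it out): conjugate by $a_t$, use that $\Ad(a_t)$ is non-expanding on $\mathfrak{p}$ for $t\geq 0$ and acts by the scalar $\euler^{rt}$ on the abelian $\mathfrak{g}^+$, and split the exponential of the $\mathfrak{g}^+$-part. One cosmetic remark: the caveat that $\xi^2\euler^{rt}$ be bounded is not actually needed, since in the block realization every $X\in\mathfrak{g}^+$ satisfies $X^2=0$, so $\exp(X)=I+X$ exactly and $\exp(O(\xi^2\euler^{rt}))=I+O(\xi^2\euler^{rt})$ without restriction.
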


Let $k \leq \min\{m, n\}$ be minimal such that $V_k \cap \{Y_s: s \in [0,1]\}$ is infinite.
By Lemma~\ref{lem:Yneq0}, we have $k>0$.
By analyticity, we have $Y_s \in V_k$ for all $s \in [0,1]$.
Since $V_{k-1} \cap \{Y_s: s \in [0,1]\}$ is finite by construction, we may assume without loss of generality that $Y_s \not\in V_{k-1}$ for all $s \in [0,1]$ by a covering argument.
In particular, by Lemma \ref{lem: centralizer Ad-orbits} the points $\{Y_s: s\in [0,1]\}$ lies on the same $Z$ orbit. Hence there exists an analytic curve $s \mapsto \rot(s) \in Z$ such that
\begin{align}\label{eq: choice of z()}
  \rot(s).Y_s = Y_0
\end{align}
for all $s \in [0,1]$.
We note that after the above reduction $Y_s \neq 0$ for all $s \in [0,1]$.

\begin{remark}
  It is not generally necessary (cf.~\cite{Yang-translates}) to map $Y_s$ to, say, $Y_{s'}$ within the centralizer of the flow and indeed it is only possible when the horosphere is abelian (or equivalently minimal). In the context of this article, it is however convenient.
\end{remark}

\subsection{\texorpdfstring{$(C, \alpha)$}{(C, alpha)}-good functions}\label{sec: Calpha}
We recall the following (by now standard) definition from \cite{KleinbockMargulis98}:
\begin{definition}[$(C, \alpha)$-good functions]
  Let $I \subset \R$ be an interval and let $f: I \to \R$.
  Given $C>0$ and $\alpha>0$ we say that $f$ is $(C, \alpha)$-good if for any subinterval $J\subset I$ and any $\varepsilon>0$ we have
  \begin{align*}
    |\{x \in J: |f(x)|< \varepsilon\}|\leq C \Big(\frac{\varepsilon}{\sup_{x \in J}|f(x)|}\Big)^\alpha |J|.
  \end{align*}
  Here (and henceforth), $|A|$ for a subset $A \subset \R$ indicates the Lebesgue measure of~$A$.
\end{definition}

For example, polynomials of degree at most $n$ are $(C_n, \frac{1}{n})$-good for some $C_n>0$ (cf.~\cite[Lemma 4.1]{DaniMargulis93}, \cite[Prop.~3.2]{KleinbockMargulis98}).

For any $\gamma>0$ and $J=[a-b, a+b] \subset \R$ an interval define $\gamma.J = [a-\gamma b, a+\gamma b]$ the centered $\gamma$-dilation of $J$.

\begin{lemma}\label{lem:dilation}
  Let $f:[0,1] \to \R$ be $(C, \alpha)$-good for some $(C, \alpha)$ and let $I \subset [0,1]$ be a maximal subinterval with $\sup_{s \in I}|f(s)|\leq 1$.
  If $J \subset I$ is an interval with $\sup_{s \in J}|f(s)|\leq \varepsilon$ then $\frac{1}{C\varepsilon^\alpha}.J \cap [0,1] \subset I$.
\end{lemma}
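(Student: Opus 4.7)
The plan is to argue by contradiction, exploiting the maximality of $I$ to locate a nearby point where $|f|$ reaches $1$, and then applying the $(C,\alpha)$-good condition on the enlarged interval connecting $J$ to that point. First I would dispense with the trivial range: if $C\varepsilon^\alpha\geq 1$ then $\gamma:=\frac{1}{C\varepsilon^\alpha}\leq 1$, so $\gamma.J\subseteq J\subseteq I$ and there is nothing to prove. Thus I may assume $\gamma>1$.

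Next, suppose for contradiction that $\gamma.J\cap[0,1]$ contains some point outside $I=[a,b]$. By the symmetric role of the two endpoints, I may assume that the right endpoint of $\gamma.J$ strictly exceeds $b$, which forces $b<1$. The maximality of $I$ together with continuity of $f$ then yields $|f(b)|=1$. In particular $b>J_{\mathrm{right}}$, since otherwise $|f(b)|\leq\sup_J|f|\leq\varepsilon<1$.

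Now comes the key step: set $J^*:=[J_{\mathrm{left}},b]\subseteq[0,1]$, so that $J\subsetneq J^*$ and $\sup_{J^*}|f|\geq|f(b)|=1$. Applying the $(C,\alpha)$-good property of $f$ on $J^*$ at a threshold slightly larger than $\varepsilon$ and letting it tend down to $\varepsilon$, one obtains
\begin{align*}
  |J|\leq \bigl|\{s\in J^*:|f(s)|\leq\varepsilon\}\bigr|\leq C\varepsilon^\alpha|J^*|=\gamma^{-1}|J^*|,
\end{align*}
i.e.\ $|J^*|\geq\gamma|J|$. Consequently the distance from the center of $J$ to $b$ is
\begin{align*}
  |J^*|-|J|/2\geq \gamma|J|-|J|/2\geq \gamma|J|/2,
\end{align*}
using $\gamma\geq 1$. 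This contradicts the assumption that $b$ lies strictly inside $\gamma.J=[J_{\mathrm{center}}-\gamma|J|/2,\,J_{\mathrm{center}}+\gamma|J|/2]$, completing the proof.

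No genuine obstacle appears here; the only mild subtlety is the gap between the strict inequality in the definition of $(C,\alpha)$-good and the non-strict hypothesis $\sup_J|f|\leq\varepsilon$, which is absorbed by the limit $\varepsilon'\searrow\varepsilon$. Conceptually the lemma just says that $(C,\alpha)$-goodness prevents a small sublevel set from sitting too close to a region on which $|f|$ attains size $1$, and the dilation factor $\gamma$ is exactly calibrated to this competition.
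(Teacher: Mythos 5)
Your proof is correct and follows essentially the same route as the paper: both arguments apply the $(C,\alpha)$-good inequality on an auxiliary interval joining $J$ to an endpoint of $I$ where maximality forces $|f|=1$, concluding that this interval must have length at least $\frac{1}{C\varepsilon^\alpha}$ times (roughly) that of $J$. The only differences are cosmetic — the paper argues the containment directly on $[a,s_0]$ with $s_0$ the center of $J$, whereas you argue by contradiction on $[J_{\mathrm{left}},b]$ — and your handling of the strict-versus-non-strict inequality is a legitimate (and appropriately flagged) refinement.
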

\begin{proof}
  Write $I = [a, b]$ and $J = [s_0-c, s_0+c]$. By the maximality condition of $I$, we get that either $|f(a)| = 1$ or $a = 0$. Similarly, either $|f(b)| = 1$ or $b = 1$.
  To show the desired containment, it is sufficient to show that
  \[\left[ s_0 - \frac{1}{C\varepsilon^\alpha}c, s_0\right] \cap [0,1] \subseteq I\qquad \text{and}\qquad\left[ s_0, s_0 + \frac{1}{C\varepsilon^\alpha}c\right] \cap [0,1] \subseteq I.\]
  We will show the first containment, and the second will follow similarly.
  If $a=0$ the containment follows since $s_0\in I$. Otherwise, $|f(a)|=1$.
  Applying the $(C, \alpha)$-property on $I^- = [a, s_0]$ yields that the measure of $J\cap I^{-}$ relative to $I^{-}$ is at most $ C\varepsilon^\alpha$ which implies the desired dilation statement.
\end{proof}

The following local to global property for $(C,\alpha)$-good functions is already mentioned in \cite[\S2.1]{Shah-translatesSO}, we include a full proof for lack of reference.

\begin{lemma}[Locality of the $(C, \alpha)$-goodness property.]\label{lem: C alpha local}
  Let $I$ be an interval and $f: I \to \RR$ be a function. Suppose that $I = \bigcup_{i=1}^k I_i$ is a union of intervals such that $f$ is $(C, \alpha)$ good on each interval $I_i$ for $C>1$ and $\alpha>0$, and for every $i,j \in \{1,\ldots,k\}$ the intersection $I_{i}\cap I_{j}$ is either empty or has length $|I_{i}\cap I_{j}| \ge \eta$.
  Then $f$ is $\Big(C^kk\big(\frac{|I|}{\eta}\big)^{k-1}, \alpha \Big)$-good.
\end{lemma}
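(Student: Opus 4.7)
My plan is to avoid direct induction on $k$ (which only yields the suboptimal constant $2^{k-1}C^k(|I|/\eta)^{k-1}$) in favor of a chain argument that relates each local supremum $M_i:=\sup_{J\cap I_i}|f|$ to the global supremum $M:=\sup_J|f|$ in one shot.

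First I will establish a two-interval ratio bound: for intervals $A, B$ on each of which $f$ is $(C, \alpha)$-good, and for an interval $J$ with $|J\cap A\cap B|\ge \eta_0$, if $M_A:=\sup_{J\cap A}|f|\ge M_B:=\sup_{J\cap B}|f|$, then $(M_A/M_B)^\alpha\le C|A|/\eta_0$. This follows by testing the $(C,\alpha)$-goodness of $f$ on the subinterval $J\cap A$ at level $M_B+\delta$: the set $J\cap A\cap B$ lies in the bad set since $|f|\le M_B$ there, giving $|J\cap A\cap B|\le C((M_B+\delta)/M_A)^\alpha|J\cap A|$, and I let $\delta\to 0$.

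Next, given $J\subseteq I$, I reduce to the case that $J$ is not contained in any single $I_i$ (else the result is immediate). Ordering $I_1,\ldots,I_k$ from left to right, I note that $S:=\{i:J\cap I_i\ne\emptyset\}$ is a contiguous range of indices. I fix $i_0\in S$ with $M_{i_0}=M$ and then prune \emph{redundant} indices --- those $i\in S$ with $J\cap I_i\subseteq J\cap I_{i'}$ for some $i'\ne i$. A short case analysis on the two endpoints of $J$ shows that redundant indices only occur at the left and right extremes of $S$, and the surviving set $S^*$ satisfies $|J\cap I_j\cap I_{j+1}|\ge\eta$ for every two consecutive $j,j+1\in S^*$. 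I may assume $i_0\in S^*$. I will then chain from $i_0$ to each $i\in S^*$ through consecutive links (chain length $\le k-1$); applying the two-interval bound to each link yields $|\log M_j-\log M_{j+1}|\le\alpha^{-1}\log(C|I|/\eta)$, and telescoping gives
\[
  (M/M_i)^\alpha\le (C|I|/\eta)^{k-1}.
\]
To finish I combine this with the $(C,\alpha)$-goodness of $f$ on $I_i$ to get $|\{x\in J\cap I_i:|f(x)|<\varepsilon\}|\le C(M/M_i)^\alpha(\varepsilon/M)^\alpha|J\cap I_i|\le C^k(|I|/\eta)^{k-1}(\varepsilon/M)^\alpha|J\cap I_i|$, and sum over the at most $k$ indices in $S^*$ using $\sum_i|J\cap I_i|\le k|J|$.

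The main obstacle will be the pruning step. When $J$ starts strictly inside the second-leftmost interval $I_{i_L+1}$, the geometric overlap $|J\cap I_{i_L}\cap I_{i_L+1}|$ may be strictly less than $\eta$, so the chain cannot directly traverse this link; instead I must verify that $J\cap I_{i_L}\subseteq I_{i_L}\cap I_{i_L+1}\subseteq J\cap I_{i_L+1}$ in that case, so that $i_L$ is redundant and is dropped from $S$. An analogous analysis handles the right extreme. Once the boundary geometry is handled, the chain has every consecutive overlap of length $\ge\eta$ and the remaining bookkeeping is routine.
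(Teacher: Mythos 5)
Your argument is correct and is essentially the paper's own proof: the two-interval ratio bound is exactly the paper's inequality comparing $\sup_{J_i}|f|$ and $\sup_{J_j}|f|$ via the $(C,\alpha)$-property on an overlap of length $\ge\eta$, your pruning of redundant indices is the paper's discarding step for pieces whose pairwise intersection meets the boundary of $J$, and the telescoping over at most $k-1$ links followed by summation over the surviving pieces yields the same constant $C^k k(|I|/\eta)^{k-1}$. The only (immaterial) difference is that the paper chains along an arbitrary path in the connected intersection graph of the $J_i$, whereas you impose a left-to-right ordering; the graph formulation sidesteps the minor edge cases (e.g.\ nested $I_i$, for which $S$ need not be a contiguous range) that your ordering-based bookkeeping has to handle by hand.
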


\begin{proof}
  Let $J \subset I$ be a subinterval. We will prove the goodness property with respect to this interval.
  For any $i \in \{1,\ldots,k\}$ set $J_i = I_i \cap J$.
  In the following we shall call two indices $i,j \in \{1,\ldots,k\}$ paired if $I_i \cap I_j \neq \emptyset$.
  Whenever $i,j$ are paired, we have either $J_i \cap J_j = I_i \cap I_j$ or one of the interval $J_i,J_j$ may be discarded in the induced cover of $J$.
  Indeed, if the intersection $I_i\cap I_j$ does not intersect $\overline{J}$ one of the intervals $J_i,J_j$ is empty. 
  Also, if it contains a boundary point of $J$ then either $J_i \subset J_j$ or $J_j \subset J_i$.
  After discarding intervals, we may assume without loss of generality that $I_i\cap I_j = J_i \cap J_j$ for all $i,j$.

  Now suppose that $i,j$ are paired. We claim that
  \begin{align}\label{eq:pairedint}
    \sup_{x \in J_i}|f(x)| \geq \tilde{\eta} \sup_{x \in J_j}|f(x)|
  \end{align}
  where $\tilde{\eta} = (\eta/(C|I|))^{\frac{1}{\alpha}}$. Indeed, as $f$ is $(C,\alpha)$-good on $I_j$ we have
  \begin{align*}
    \eta \leq |J_i \cap J_j| \leq \big|\big\{x \in J_j:|f(x)|\leq \sup_{x\in J_i}|f(x)|\big\}\big|
    \leq C \bigg(\frac{\sup_{x\in J_i}|f(x)|}{\sup_{x\in J_j}|f(x)|}\bigg)^\alpha|I|
  \end{align*}
  which implies \eqref{eq:pairedint}.

  Note that $\sup_{x\in J}|f(x)| = \sup_{x\in J_{i_0}}|f(x)|$ for some $i_0$.
  Consider the graph whose vertices are indices $i=1,\ldots, k$ and whose edges are $(i,j)$ where $i,j$ are paired.
  It is connected since the intervals $J_i$ cover the connected set $J$.
  Iterating \eqref{eq:pairedint} over a path between $i_0$ to any other vertex $i$ yields that for every $i=1,\ldots, k$ we have $\sup_{x\in J_i}|f(x)| \ge \tilde{\eta}^{k-1} \sup_{x\in J}|f(x)|$.
  Finally, for every $\varepsilon>0$ we have
  \begin{align*}
    |\{x \in J: |f(x)| \leq \varepsilon\}|
     & \leq \sum_{i=1}^k |\{x \in J_i: |f(x)| \leq \varepsilon\}|                                                    \\
     & \leq C \sum_{i=1}^k \Big(\frac{\varepsilon}{\sup_{x\in J_i}|f(x)|}\Big)^\alpha |J_i|                          \\
     & \leq C\tilde{\eta}^{-\alpha(k-1)} \sum_{i=1}^k \Big(\frac{\varepsilon}{\sup_{x\in J}|f(x)|}\Big)^\alpha |J_i| \\
     & \leq Ck\tilde{\eta}^{-\alpha(k-1)} \Big(\frac{\varepsilon}{\sup_{x\in J}|f(x)|}\Big)^\alpha |J|
  \end{align*}
  as desired.
\end{proof}

The following lemma in particular establishes uniform $(C, \alpha)$-goodness for certain families of analytic functions.

\begin{lemma}\label{lem: C, alpha good}
  Suppose that $\varphi:[0,1]\to M_{N\times N}(\R)$ is analytic.
  Then there exist constants $C > 0$, $\alpha>0$, and $M>0$ such that for any $v \in \R^N$ and $g \in M_{N\times N}(\R)$ the following hold:
  \begin{itemize}
    \item The function $s \mapsto \norm{g\varphi(s)v}$ is $(C, \alpha)$-good.
    \item For any $x>0$ the set
          \begin{align*}
            \{s \in [0,1]: \norm{g\varphi(s)v}< x\}
          \end{align*}
          has at most $M$ connected components.
  \end{itemize}
\end{lemma}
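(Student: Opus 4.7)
My plan is to deduce both assertions from a single uniform property of finite-dimensional spaces of analytic functions. Observe first that each coordinate of $s \mapsto g\varphi(s)v$ has the form $(g\varphi(s)v)_i = \sum_{j,k} g_{ij}\varphi(s)_{jk}v_k$ and hence lies in the \emph{fixed} finite-dimensional subspace $V \subset C^\omega([0,1])$ spanned by the $N^2$ entries $\varphi(\cdot)_{jk}$, independently of the choice of $g$ and $v$. Since any two norms on $\R^N$ are equivalent with a constant depending only on $N$, I will work throughout with the Euclidean norm, in which case
\begin{align*}
\|g\varphi(s)v\|^2 = \sum_{i=1}^N f_i(s)^2, \quad f_i \in V,
\end{align*}
and $\|g\varphi(s)v\|^2 - x^2$ lies in the finite-dimensional space $W' = \mathrm{span}\{fg : f,g \in V\} + \R$ of analytic functions on $[0,1]$.

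The lemma will then follow from the core sublemma: for any finite-dimensional subspace $W \subset C^\omega([0,1])$ there exist constants $C,\alpha,M > 0$ such that every $f \in W$ is $(C,\alpha)$-good on $[0,1]$ and has at most $M$ zeros on $[0,1]$ counted with multiplicity. Applied to $W = V$, the first part gives $(C,\alpha)$-goodness of $\max_i |f_i|$---immediate from the definition, since on any subinterval the maximum is dominated by a single $f_i$---which in turn gives $(C',\alpha)$-goodness of the Euclidean norm $\|g\varphi(s)v\|$ up to a dimensional constant. Applied to $W = W'$, the zero count bounds the number of sign changes of $\|g\varphi(s)v\|^2 - x^2$ and hence the number of connected components of $\{s \in [0,1] : \|g\varphi(s)v\| < x\}$.

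To prove the core sublemma I would run a local-to-global argument built on Lemma~\ref{lem: C alpha local}. For each $s_0 \in [0,1]$ let $K(s_0)$ be the smallest integer $K \geq 1$ such that the Taylor evaluation map
\begin{align*}
T_{s_0, K} \colon W \to \R^K, \quad f \mapsto \left(f(s_0), f'(s_0)/1!, \ldots, f^{(K-1)}(s_0)/(K-1)!\right)
\end{align*}
is injective; such $K$ exists because a non-zero analytic function cannot have all Taylor coefficients vanishing at $s_0$ and $W$ is finite-dimensional. Injectivity of $T_{s_0,K}$ for fixed $K$ is an open condition in $s_0$, so $K(\cdot)$ is upper semicontinuous on $[0,1]$ and hence uniformly bounded by some $K_0$. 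By compactness of the unit sphere $S_W = \{f \in W : \|f\|_\infty = 1\}$ together with continuity of derivatives in $s_0$, the sup-norm of the Taylor polynomial $P_{s_0,f}$ of degree $K_0 - 1$ on small intervals around $s_0$ is bounded below uniformly in $s_0,f$, while the remainder $f - P_{s_0,f}$ is $O((s-s_0)^{K_0})$ with a constant uniform in $f \in S_W$. Hence on a neighborhood $I_{s_0}$ of $s_0$, of length bounded below uniformly, $|f|$ is comparable to $|P_{s_0,f}|$ within a factor of $2$. Since polynomials of degree $\leq K_0 - 1$ are uniformly $(C_{K_0}, 1/(K_0-1))$-good with at most $K_0 - 1$ zeros, the same properties transfer to $f$ on $I_{s_0}$ with slightly worse constants. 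Covering $[0,1]$ by finitely many such intervals and invoking Lemma~\ref{lem: C alpha local} then yields uniform constants $C,\alpha$ and a uniform bound on the total number of zeros, completing the sublemma.

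The main obstacle is the uniformity in the Taylor-comparison step: verifying that $|P_{s_0,f}|$ dominates $|f - P_{s_0,f}|$ on an interval whose length is independent of both $f \in S_W$ and $s_0 \in [0,1]$. This is where compactness of $S_W$ together with the continuous dependence of Taylor data on $s_0$ are essential, and it is the single place where all the uniformity in the sublemma is concentrated. Once this is in place, everything else reduces to the local-to-global mechanism already recorded in Lemma~\ref{lem: C alpha local}.
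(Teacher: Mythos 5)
Your overall architecture is the one the paper uses: reduce to a fixed finite-dimensional space of analytic functions, run a compactness argument over (unit sphere)$\times[0,1]$ to get uniform local constants, and patch with Lemma~\ref{lem: C alpha local}. The preliminary reductions are fine (the coordinate functions lie in a fixed space $V$, the max of $(C,\alpha)$-good functions is $(C,\alpha)$-good, zeros of $\|g\varphi(s)v\|^2-x^2$ control the component count, and the uniform bound $K_0$ on the order of injectivity of the Taylor map is correctly justified). The problem is the local step, exactly where you locate ``the main obstacle'': compactness supplies the uniform constants, but it does not make the Taylor-comparison deduction valid. Your estimates give $\sup_{I_{s_0}}|f-P_{s_0,f}|\le\tfrac12\sup_{I_{s_0}}|P_{s_0,f}|$, i.e.\ comparability of suprema over the \emph{whole} interval $I_{s_0}$. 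This is not pointwise comparability: near a zero of $P_{s_0,f}$ at distance $\sim h$ from $s_0$, the remainder has size $\sim C'h^{K_0}\sim\sup_{I_{s_0}}|P_{s_0,f}|$ while $|P_{s_0,f}|$ is arbitrarily small, so $|f|$ and $|P_{s_0,f}|$ are not within a factor of $2$ there. And sup-norm comparability over $I_{s_0}$ does not transfer $(C,\alpha)$-goodness, because the definition quantifies over \emph{all} subintervals $J\subset I_{s_0}$ and all $\varepsilon>0$, including $\varepsilon$ far smaller than the remainder; a function within a factor $2$ of $P$ in sup-norm can still vanish on a whole subinterval, so the implication you invoke is false without further input. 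The same issue affects the zero count: sup-norm closeness to a polynomial bounds neither the number of zeros of $f$ nor the number of components of its sublevel sets.

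The repair uses data you already have and is precisely the paper's route. From the uniform lower bound $\max_{j\le K_0-1}|f^{(j)}(s_0)|/j!\ge c$ on $S_W\times[0,1]$ together with uniform bounds on the next derivative, one gets for each $(f,s_0)$ an index $k\le K_0-1$ and an interval $I_{s_0}$ of uniformly bounded-below length on which $|f^{(k)}|\ge c/2$ (the case $k=0$ being trivial, since then $|f|$ is bounded below on $I_{s_0}$). The mean value theorem then bounds the number of connected components of $\{s\in I_{s_0}:|f(s)|<x\}$ by $k$, and \cite[Lemma 3.3]{KleinbockMargulis98} --- a genuine analytic lemma about $C^k$ functions with non-vanishing $k$-th derivative, not a perturbation of the polynomial case --- gives $(C_p,\alpha_p)$-goodness on $I_{s_0}$. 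With that local input, your covering and the application of Lemma~\ref{lem: C alpha local} go through as written.
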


\begin{proof}
  Since $f$ is $(C, \alpha)$-good if and only if $f^2$ is $(C, \alpha/2)$-good, we may prove the lemma for $s \mapsto \norm{g\varphi(s)v}^2$ instead.
  Let $\mathcal{F}$ be the linear span of the products of all pairs of entry functions of $\varphi$. The space $\mathcal{F}$ is a finite-dimensional vector space as it is generated by $N^2(N^2+1)/2$ elements.
  The map $s\mapsto \norm{g\varphi(s)v}^2$ is in $\mathcal F$ for every $t\in \RR$, $v\in \R^N$ and $g \in M_{N\times N}(\R)$, hence we will prove the conclusions of the lemma for all $f \in \mathcal{F}$.

  Let $S_{\mathcal{F}}$ be a unit sphere in $\mathcal{F}$ (with respect to any norm).
It is sufficient to prove the conclusions of the lemma for functions in $S_\mathcal{F}$ since any non-zero function in $\mathcal{F}$ is a multiple of a function in $S_\mathcal{F}$ (by normalizing).

  For every $p=(f, s)\in S_{\mathcal{F}} \times [0,1]$ there is $k=k_p\in \NN$ such that $|f^{(k)}(s)| > 0$ by analyticity of $f$.
  We can find a neighborhood $U_p\times I_p$ of $p=(f, s)\in S_{\mathcal{F}}\times [0,1]$ and $\varepsilon_p>0$ such that for every $p_0 = (f_0, s_0)\in U_p\times I_p$ we have $|f^{(k)}_0(s_0)|>\varepsilon_p$.
  Moreover, we may assume that $I_p$ is always an interval.
  Since $S_{\mathcal{F}}\times [0,1]$ is compact we deduce that there is a finite collection $P\subset S_{\mathcal{F}}\times [0,1]$ such that $\{U_p\times I_p: p\in P\}$ covers $S_{\mathcal{F}}\times [0,1]$.

  By the mean value theorem if $f\in U_p$ then $\{s\in I_p:|f(s)|<x\}$ has at most $k_p$ connected components for any $x >0$. The second part of the lemma follows from the finiteness of $P$.
  For the first part of the lemma we may apply \cite[Lemma 3.3]{KleinbockMargulis98} and obtain that for any $f \in U_p$ the restriction $f|_{I_p}$ is $(C_p, \alpha_p)$-good for some $C_p, \alpha_p>0$ (depending on $k_p$).
  Thus, the first part of the lemma follows also from the finiteness of $P$ and Lemma \ref{lem: C alpha local}.
\end{proof}

\section{Long intervals}
\label{sec: long inter}

\subsection{Contraction for long intervals with short vectors}

\begin{claim}
  \label{claim: polynomial size}
  Let $P(x) = a_0x^n+a_1x^{n-1}+a_2x^{n-2} +\cdots + a_n$ be a polynomial. Then for every $s>0$ we have
  \begin{align*}
    \max_{x\in [0, s]}|P(x)| \asymp_n \max_{k=0, \ldots, n}|a_ks^{n-k}|.
  \end{align*}
\end{claim}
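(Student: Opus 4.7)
My plan is to reduce to the case $s=1$ by rescaling, and then appeal to the equivalence of norms on the finite-dimensional vector space of real polynomials of degree at most $n$.

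For the upper bound direction, I would simply apply the triangle inequality: for any $x \in [0, s]$,
\[
|P(x)| \leq \sum_{k=0}^n |a_k|\, x^{n-k} \leq \sum_{k=0}^n |a_k|\, s^{n-k} \leq (n+1) \max_{k=0,\ldots,n} |a_k s^{n-k}|,
\]
which gives one direction with constant $n+1$.

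For the lower bound, I would perform the substitution $x = sy$, so that
\[
P(sy) = \sum_{k=0}^n a_k s^{n-k} y^{n-k} = \sum_{k=0}^n b_k y^{n-k}, \qquad b_k := a_k s^{n-k},
\]
and note that $\max_{x \in [0,s]} |P(x)| = \max_{y \in [0,1]} |Q(y)|$ where $Q(y) = \sum_k b_k y^{n-k}$. Thus it suffices to show that for polynomials $Q$ of degree at most $n$ on $[0,1]$, one has
\[
\max_k |b_k| \leq C_n \max_{y \in [0,1]} |Q(y)|.
\]
This is the statement that the $\ell^\infty$-norm on coefficients and the sup-norm on $[0,1]$ are equivalent on the $(n+1)$-dimensional real vector space $\R_{\leq n}[y]$ of polynomials of degree at most $n$. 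Both functionals are norms: the sup-norm vanishes only on the zero polynomial since a polynomial of degree $\leq n$ vanishing on an interval is identically zero, and the coefficient norm is obviously a norm. Equivalence of norms in finite dimension then yields a constant $C_n$ depending only on $n$.

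There is no serious obstacle here; the only point to remark is that the implicit constants depend only on $n$ (and not on $s$ or on $P$) precisely because the rescaling $x = sy$ absorbs all $s$-dependence into the coefficients $b_k$, reducing everything to a norm comparison on a fixed finite-dimensional space. Combining the two inequalities gives $\max_{x \in [0,s]} |P(x)| \asymp_n \max_k |a_k s^{n-k}|$.
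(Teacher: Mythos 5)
Your proof is correct and follows essentially the same route as the paper: rescale via $x=sy$ to reduce to $s=1$, then invoke equivalence of norms (coefficient sup-norm versus sup-norm on $[0,1]$) on the finite-dimensional space of polynomials of degree at most $n$. The explicit triangle-inequality bound for one direction is a harmless elaboration of the same idea.
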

\begin{proof}
  Using rescaling it is sufficient to prove the lemma for $s=1$ then the claim follows from the fact that both sides are norms in the coefficient vector.
\end{proof}
\begin{claim}\label{claim: unipotent homomorphisms bound}
  Let $h\in \mathfrak{sl}_N(\RR)\setminus \{0\}$ be a nilpotent matrix.
  Then there is a diagonalizable flow $t \mapsto b_t = \exp(b't)\in \SL_N(\RR)$ (where $b' \in \mathfrak{sl}_N(\RR)$) and a constant $\delta \geq \frac{1}{N}$ such that for every $t_0\ge 0$, vector $v\in \RR^N$ and
  interval $I\subseteq \RR$ with $|I|\ge \euler^{t_0}$ we have
  \[
    \max_{s\in I} \|b_{t_0}\exp(sh) v\| \asymp_{h} \euler^{-\delta t_0}\max_{s\in I}\|\exp(sh) v\|.
  \]
\end{claim}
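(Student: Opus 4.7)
The plan is to apply Jacobson--Morozov to extend $h$ to an $\mathfrak{sl}_2$-triple $(h, H, Y) \subset \mathfrak{sl}_N(\RR)$ with $H$ semisimple and $[H, h] = 2h$, and to decompose $V = \RR^N = \bigoplus_{d \ge 1} W_d$ into isotypic components for the resulting $\mathfrak{sl}_2$-action, where $W_d$ is the sum of all irreducible subrepresentations of dimension $d$. I then define
\[
\delta = \frac{1}{2N} \sum_d (\dim W_d)(d-1), \qquad b' = -\tfrac12 H + \sum_d \alpha_d\, \pi_{W_d}, \qquad \alpha_d = \tfrac{d-1}{2} - \delta,
\]
with $\pi_{W_d}$ the projection onto $W_d$. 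The formula for $\delta$ is exactly what forces $\operatorname{tr}(b') = 0$, and $b'$ is semisimple over $\RR$ because $H$ commutes with each $\pi_{W_d}$, so $b_t := \exp(tb') \in \SL_N(\RR)$ is a diagonalizable flow. Since $h \neq 0$, some $d \ge 2$ must satisfy $\dim W_d \ge 2$, giving $(d-1)\dim W_d \ge 2$ and hence $\delta \ge 1/N$.

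To prove the estimate, I first reduce to $I = [0, L]$ by the substitution $v \mapsto v' = \exp(a h) v$ with $a$ the left endpoint of $I$, using $\exp(sh) v = \exp((s-a)h) v'$. Equip $V$ with an inner product under which the $H$-weight spaces are orthogonal, and work within a single irreducible summand $V_\alpha \subset W_d$ of dimension $d$ with weight basis $w_0, \ldots, w_{d-1}$ satisfying $H w_m = (d-1-2m) w_m$ and $h w_m = m w_{m-1}$. Expanding $v|_{V_\alpha} = \sum_j c_j w_j$ gives
\[
\exp(sh) v = \sum_m p_m(s)\, w_m, \qquad p_m(s) = \sum_{j \ge m} \binom{j}{m} c_j\, s^{j-m}.
\]
Applying Claim~\ref{claim: polynomial size} to each $p_m$ on $[0, L]$ yields $\max_{s \in [0, L]} |p_m(s)| \asymp \max_{j \ge m} L^{j-m} |c_j|$; a further maximum over $m$ is attained at $m = 0$ since $L \ge 1$, giving $\max_{s \in [0, L]} \|\exp(sh) v\|_{V_\alpha} \asymp \max_j |c_j|\, L^j$.

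For the twisted quantity, $b_{t_0}$ multiplies the $w_m$-component of $\exp(sh) v$ by $e^{(\alpha_d - (d-1-2m)/2)t_0} = e^{(m - \delta) t_0}$, so the analogous computation produces
\[
\max_{s \in [0, L]} \|b_{t_0} \exp(sh) v\|_{V_\alpha} \asymp e^{-\delta t_0} \max_j |c_j|\, L^j \max_{0 \le m \le j} (e^{t_0}/L)^m.
\]
The hypothesis $L \ge e^{t_0}$ forces $(e^{t_0}/L)^m \le 1$ for $m \ge 0$, with equality at $m = 0$, so the inner maximum equals $1$; this gives the ratio $\asymp_h e^{-\delta t_0}$ on each $V_\alpha$, and the orthogonality of the decomposition $V = \bigoplus_\alpha V_\alpha$ (together with the equivalence of $\ell^\infty$ and $\ell^2$ on finitely many summands) lifts it to all of $V$. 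The main obstacle is the final bookkeeping in the display: arranging that the weight-dependent factor $e^{m t_0}$ exactly cancels the gain $L^{-m}$ coming from the reduced polynomial degree, so that both the plain and the twisted maxima reduce to $\max_j |c_j| L^j$ up to the single prefactor $e^{-\delta t_0}$, and verifying that a common choice of the correction $\alpha_d$ yields the \emph{same} $\delta$ across all isotypic components so that the $\asymp$ bound is two-sided.
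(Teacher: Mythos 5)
Your proof is correct and is essentially the paper's argument in different clothing: the isotypic decomposition under the Jacobson--Morozov $\mathfrak{sl}_2$-triple is exactly the Jordan block decomposition of $h$, your weight-basis computation of the polynomials $p_m$ via Claim~\ref{claim: polynomial size} matches the paper's per-block estimate, and your correction term $\alpha_d = \frac{d-1}{2}-\delta$ (forcing $\operatorname{tr}(b')=0$ and a common $\delta$ across components) is the same device as the paper's factor $\euler^{-(\delta-\delta_i)t}$, yielding the identical $\delta = \frac{1}{2N}\sum_i N_i(N_i-1)$.
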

\begin{proof}
  First, we will prove the claim whenever
  $h$ is a nontrivial Jordan block,
  $$h = \begin{pmatrix}
      0 & 1 &        &   \\
        & 0 & \ddots &   \\
        &   & \ddots & 1 \\
        &   &        & 0
    \end{pmatrix}.$$
  Let $b(t) = \exp\left(t \diag \left(\frac{1-N}{2}, \frac{3-N}{2}, \dots, \frac{N-1}{2}\right)\right)$.
  We may shrink $I$ and assume that $|I| = \euler^{t_0}$. By changing $v$ we may assume that $I = [0, \euler^{t_0}]$.
  Now if $v = (v_i)_{i=0}^{N-1}$ then by Claim \ref{claim: polynomial size} the maximal $k$-th coordinate satisfies
  \begin{align*}
    \max_{s\in I}|(\exp(sh)v)_k| \asymp_N \max_{i=0,\ldots,N-1-k} \euler^{it_0}|v_{i+k}|.
  \end{align*}
  It follows that
  \[\max_{s\in I}\|\exp(sh) v\| \asymp_N \max_{i=0,\ldots,N-1} \euler^{it_0}|v_{i}|.\]
  Similarly
  \begin{align*}
    \max_{s\in I} \|b(t_0)\exp(sh) v\|
     & \asymp_N \max_{k=0,\ldots,N-1}\big(\euler^{(1+2k-N)t_0/2}\max_{i=0,\ldots,N-1-k} \euler^{it_0}|v_{i+k}|\big) \\
     & \asymp_N \euler^{(1-N)t_0/2}\max_{i=0,\ldots,N-1} \euler^{it_0}|v_{i}|.
  \end{align*}
  The result follows for $\delta = \frac{N-1}{2}$.
  Now, whenever $h$ is of Jordan normal form, say, $h = \diag(h_0, \dots, h_{k})$ where $h_i\in \lSL_{N_i}(\RR)$ is a Jordan block.
  Let $b_0(t), \dots, b_{k}(t)$ be the corresponding homomorphisms and $\delta_i = \frac{N_i-1}{2}$ for $i=0, \dots, k$.
  Let $\delta = \frac{1}{N}\sum_{i=0}^{k} N_i \delta_i$ be the weighted average of the $\delta_i$. It satisfies that
  $\sum_{i=0}^k N_i (\delta-\delta_i) = 0$.
  Then $b(t) = \diag(\euler^{-(\delta-\delta_i)t}b_i(t): i=0, \dots, k)$ satisfies the desired result. Note that if $h$ is nontrivial, then one of the $N_i$ is greater or equal $2$, and hence $\delta\geq \frac{1}{N}$.
  Since every nilpotent $h\in \lSL_N(\RR)$ is conjugate to a Jordan normal form, the result follows.
\end{proof}

Recall the definition of $\rot(\cdot)$ from \eqref{eq: choice of z()}.

\begin{lemma}\label{lem: contraction long int}
  For every nontrivial $\rho: G\to \SL_N(\RR)$ there exists diagonalizable flow $t \in \R \mapsto b_t\in  \SL_N(\RR)$
  and constants $\delta_1, \delta_2>0$ such that for every $t_1<\delta_1 t_0$, we have the following:
  For every interval $J\subseteq [0,1]$ with $|J| \ge \euler^{-rt_0 + t_1}$ and for every $v\in \RR^N$ we have
  \begin{align}
    \label{eq: main lemma ineq}
    \max_{s\in J} \|b_{t_1}\rho(a_{t_0}\rot(s)\varphi(s)) v\| \asymp \euler^{-\delta_2 t_1}\max_{s\in J}\|\rho(a_{t_0}\varphi(s)) v\|.
  \end{align}
\end{lemma}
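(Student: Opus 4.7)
The plan is to reduce to Claim~\ref{claim: unipotent homomorphisms bound} by exploiting that the normalization $\rot(s).Y_s = Y_0$ replaces the moving tangent vectors $Y_s$ by a single fixed direction $Y_0$. Since $\rho$ is nontrivial and $\SL_d(\R)$ has simple Lie algebra, the derivative $h := d\rho(Y_0)$ is nonzero; and since $Y_0 \in \Fg^+$ is nilpotent as an element of $\mathfrak{sl}_d(\R)$, $h$ is nilpotent in $\mathfrak{sl}_N(\R)$. Take $b_t$ and $\delta \geq 1/N$ from Claim~\ref{claim: unipotent homomorphisms bound} applied to $h$; I shall verify the lemma with $\delta_2 := \delta$ and $\delta_1 > 0$ to be chosen sufficiently small below.

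\emph{Core local identity.} Fix $s_0 \in [0,1]$. Since $\rot(s) \in Z$ commutes with $a_t$, combining Lemma~\ref{lem: directionalasymp}, the Taylor expansion $\rot(s_0 + \xi) = \rot(s_0)(I + O(\xi))$, and the relation $\Ad(\rot(s_0))Y_{s_0} = Y_0$ from \eqref{eq: choice of z()} (used to move $\rot(s_0)$ past the exponential of $Y_{s_0}$), one obtains
\begin{equation*}
a_{t_0}\rot(s_0+\xi)\varphi(s_0+\xi) = \bigl(I + O(\xi + \xi^2 \euler^{rt_0})\bigr)\exp(uY_0)\,a_{t_0}\rot(s_0)\varphi(s_0),
\end{equation*}
with $u = \xi\euler^{rt_0}$ and the error measured in matrix norm with constants depending only on $\varphi$. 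Setting $f(s) := \rho(a_{t_0}\rot(s)\varphi(s))v$ and applying $\rho$ yields the intrinsic linearization
\begin{equation*}
f(s_0 + \xi) = \bigl(I + O(\xi + \xi^2 \euler^{rt_0})\bigr)\exp(uh)\,f(s_0).
\end{equation*}

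Next, cover $J$ by subintervals $J^{(j)}$ of length exactly $\euler^{-rt_0 + t_1}$, each with base point $s_0^{(j)}$, and set $w^{(j)} := f(s_0^{(j)})$. On $J^{(j)}$ the variable $u$ traverses an interval of length $\euler^{t_1}$, and since $|\xi| \leq \euler^{-rt_0 + t_1}$ the error factor becomes $I + O(\euler^{-rt_0 + 2t_1})$. Claim~\ref{claim: unipotent homomorphisms bound} applied to the idealized map $u \mapsto \exp(uh)w^{(j)}$ gives
\begin{equation*}
\max_u\|b_{t_1}\exp(uh)w^{(j)}\| \asymp \euler^{-\delta t_1}\max_u\|\exp(uh)w^{(j)}\|.
\end{equation*}
The main obstacle is absorbing the multiplicative error after left-multiplication by $b_{t_1}$: since $\|b_{t_1}\|, \|b_{t_1}^{-1}\| \leq \euler^{Ct_1}$ for some $C = C(\rho) > 0$, the induced additive perturbation to $b_{t_1}\exp(uh)w^{(j)}$ is bounded in norm by $\euler^{(C+2)t_1 - rt_0}\max_u\|\exp(uh)w^{(j)}\|$. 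Requiring this to be negligible compared to $\euler^{-\delta t_1}\max_u\|\exp(uh)w^{(j)}\|$ reduces to the linear constraint $(C + 2 + \delta)t_1 < rt_0$, which we enforce by picking $\delta_1 < r/(C + 2 + \delta)$. Both inequalities in the desired $\asymp$ then follow on each $J^{(j)}$; taking maxima over $j$ yields the $f$-comparison on $J$. Finally, since $\|\rho(\rot(s))\|$ and $\|\rho(\rot(s))^{-1}\|$ are bounded uniformly in $s$ by compactness of $[0,1]$, one has $\max_{s\in J}\|\rho(a_{t_0}\varphi(s))v\| \asymp \max_{s\in J}\|f(s)\|$, which converts the $f$-comparison into \eqref{eq: main lemma ineq}.
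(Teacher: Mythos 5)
Your proposal is correct and follows essentially the same route as the paper's proof: reduce to subintervals of length exactly $\euler^{-rt_0+t_1}$, use Lemma~\ref{lem: directionalasymp} together with the normalization \eqref{eq: choice of z()} to write the translated curve as a bounded perturbation of $\exp(uY_0)$ applied to a fixed base point, apply Claim~\ref{claim: unipotent homomorphisms bound} to $h=\mathrm{D}\rho(Y_0)$, and absorb the error after conjugation by $b_{t_1}$ by taking $\delta_1$ small. The only cosmetic difference is that you bound the perturbation additively against the maximum rather than conjugating it into a bounded matrix as the paper does; both are valid.
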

\begin{proof}
  Since for every $L<|J|$ and a function $f: J\to \RR$ we have
  \[\max_{s\in J}f(s) = \max_{\substack{J'\subset J\\|J'| = L}}\max_{s\in J'} f(s), \]
  it is sufficient to prove the claim for any subinterval of $J$ with length $\euler^{-rt_0 + t_1}$, hence we may assume that $|J| = \euler^{-rt_0 + t_1}$ and write $J = [s_0, s_0+\euler^{-rt_0 + t_1}]$.
  Set $s=s_0+\varepsilon$ for some $\varepsilon \in \left[0, \euler^{-rt_0 + t_1}\right]$.
  Lemma~\ref{lem: directionalasymp} implies that
  \begin{align*}
    a_{t_0} \varphi(s) = (I+O(\varepsilon+\varepsilon^2\euler^{rt_0})) \exp(\varepsilon\euler^{rt_0}Y_{s_0})a_{t_0}\varphi(s_0).
  \end{align*}
  Hence,
  \begin{align*}
    a_{t_0}\rot(s)\varphi(s)
     & = \rot(s) a_{t_0}\varphi(s)                                                                                                         \\
     & = (I+O(\varepsilon))\rot(s_0)(I+O(\varepsilon+\varepsilon^2\euler^{rt_0})) \exp(\varepsilon\euler^{rt_0}Y_{s_0})a_{t_0}\varphi(s_0) \\
     & = A_{s_0}(\varepsilon)\exp(\varepsilon\euler^{rt_0}Y_{0})\rot(s_0)a_{t_0}\varphi(s_0)
  \end{align*}
  where $A_{s_0}(\varepsilon)\in \SL_d(\R)$ satisfies $A_{s_0}(\varepsilon) = I+O(\varepsilon + \varepsilon^2\euler^{rt_0})$.
  The range for $\varepsilon$ implies that $\varepsilon + \varepsilon^2\euler^{rt_0}\ll \euler^{-rt_0+2t_1} $.

The image of the homomorphism $x\mapsto \rho(\exp(xY_0))$ is a one-parameter unipotent subgroup and, hence, the homomorphism is of the form $x \mapsto \exp(h_0x)$ for some nonzero nilpotent $h_0\in \lSL_N(\RR)$ (explicitly, $h_0 = \mathrm{D}\rho(Y_0)$).
  Let $t \mapsto b(t)$ and $\delta_0>0$ be the diagonalizable flow and the constant from Claim \ref{claim: unipotent homomorphisms bound} for this $h_0$ respectively.
  For every $v\in \RR^N$ we have
  \begin{align}
    \label{eq: LHS of main ineq}
    \max_{s\in J} & \|b(t_1)\rho(a_{t_0}\rot(s)\varphi(s)) v\|                                                                                              \\&= \nonumber
    \max_{\varepsilon\in [0, \euler^{-rt_0 + t_1}]} \|b(t_1)\rho(A_{s_0}(\varepsilon) \exp(\varepsilon\euler^{rt_0}Y_0) \rot(s_0) a_{t_0} \varphi(s_0)) v\| \\
                  & =\nonumber
    \max_{\varepsilon\in [0, \euler^{-rt_0 + t_1}]} \|\rho(A_{s_0}(\varepsilon))^{b(t_1)} b(t_1)\exp(\varepsilon \euler^{rt_0} h_0) v'\|
    ,
  \end{align}
  where $\rho(A_{s_0}(\varepsilon))^{b(t_1)}$ is the conjugation of $\rho(A_{s_0}(\varepsilon))$ by $b(t_1)$ and $$v' = \rho(\rot(s_0) a_{t_0} \varphi(s_0))v.$$
  Since $A_{s_0}(\varepsilon)$ is of distance $O(\euler^{-rt_0+2t_1})$ from the identity matrix, it follows that $\rho(A_{s_0}(\varepsilon))$ is of distance  $O(\euler^{-rt_0+2t_1})$ from the identity matrix as well.
  Since the eigenvalues of $b(t_1), b(t_1)^{-1}$ are bounded by $Rt_1$ for some $R>0$ depending only on $b$, and hence only on $\rho$, it follows that $\rho(A_{s_0}(\varepsilon))^{b(t_1)}$ is of distance
  $\euler^{-rt_0+ (2R+2)t_1}$ of the identity element. Provided that $t_1 < \frac{r}{2R+2}t_0$ we get that $\rho(A_{s_0}(\varepsilon))^{b(t_1)}$ is bounded and hence we may omit this term in Eq. \eqref{eq: LHS of main ineq}, that is, it is proportional to
  \begin{align}\label{eq: LHS of main ineq 1}
    \max_{x\in [0, \euler^{t_1}]} \|b(t_1)\exp(x h_0) v'\|.
  \end{align}
  Now Claim \ref{claim: unipotent homomorphisms bound} implies that
  \begin{align}\label{eq: main lemma ineq 1}
    \max_{x\in [0, \euler^{t_1}]} \|b(t_1)\exp(x h_0) v'\|\asymp \euler^{-\delta t_1}\max_{x\in [0, \euler^{t_1}]} \|\exp(x h_0) v'\|.
  \end{align}
  We have seen that the left-hand side of \eqref{eq: main lemma ineq 1} is asymptotic to the left-hand side of \eqref{eq: main lemma ineq}.
  Moreover, we obtain in a similar manner
  \begin{align*}
    \max_{x\in [0, \euler^{t_1}]} \|\exp(x h_0) v'\|
    \asymp
    \max_{s\in J} \|\rho(\rot(s)a_{t_0}\varphi(s)) v\| \asymp \max_{s\in J} \|\rho(a_{t_0}\varphi(s)) v\|.
  \end{align*}
  using that $s \mapsto \rot(s)$ is bounded, which implies that $s \mapsto \rho(\rot(s))$ is bounded as well. The result thus follows from \eqref{eq: main lemma ineq 1}.
\end{proof}

\subsection{Atypicality of long intervals with short vectors}

Lemma~\ref{lem: contraction long int} together with the quantitative non-divergence results by Kleinbock and Margulis \cite{KleinbockMargulis98} implies the following result, which roughly states that in any representation the set of points $s$ contained in an unexpectedly long interval with a short vector is small (unless a natural obstruction occurs).

\begin{proposition}[Avoiding exceptional intervals]\label{prop: excep int}
  Let $\varphi: [0,1]\to G$ be an analytic curve whose image in $P\backslash G$ is not contained in any weakly constraining pencil or in any partial flag subvariety with respect to $\{a_t\}$.
  Let $\rho: \SL_d \to \SL_N$ be a representation defined over $\Q$ and
  let $\mathcal{U} \subset \Z^N$ be a set of non-zero vectors.
  There exist constants $\delta>0$ and $C_1>0$ depending on $\rho$ with the following property.

  Let $\psi: (0, \infty)\to (0, \infty)$ be a function with $\psi(t) < \delta t$ for all $t >0$.
  Then one of the following is true for all $g \in \SL_d(\R)$, $R>0$, and $t>0$ sufficiently large (depending on $g, R$):
  \begin{enumerate}
    \item Let $\mathcal{VB}_t$ be the union of all intervals $I \subset [0,1]$ with $|I|\geq \euler^{-rt+\psi(t)}$ and with
          \begin{align*}
            I \subset \{s \in [0,1]: \norm{\rho(a_t \varphi(s)g)v} <R\}
          \end{align*}
          for some $v \in \mathcal{U}$.
          Then $|\mathcal{VB}_t|\le C_2\euler^{-\delta\psi(t)}$ for a constant $C_2 = C_2(R, g)$.
    \item There exists a $\rho(\SL_d(\R))$-invariant vector $v \in \mathcal{U}$ with $\norm{v} \leq C_1 R$.
  \end{enumerate}
\end{proposition}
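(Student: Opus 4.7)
The strategy is to apply the contraction Lemma~\ref{lem: contraction long int} on each long interval in order to turn the bound $\|\rho(a_t\varphi(s)g)v\|<R$ into an exponentially smaller bound on the rescaled curve $s\mapsto b_{\psi(t)}\rho(a_t\rot(s)\varphi(s)g)$, and then to invoke the quantitative non-divergence theorem of Kleinbock--Margulis~\cite{KleinbockMargulis98}. The dichotomy in the proposition mirrors the standard KM dichotomy: either non-divergence holds and gives option~(1) via the $(C,\alpha)$-good bound, or it fails on some rational subspace, in which case the basic lemma of Shah~\cite{Shah-translates1} and Yang~\cite{Yang-translates} forces that subspace to be $\rho(\SL_d(\R))$-invariant, yielding option~(2).

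Concretely, let $(b_t)$, $\delta_1,\delta_2$ come from Lemma~\ref{lem: contraction long int} applied to $\rho$, and let $C_0,\alpha_0$ be the goodness constants of Lemma~\ref{lem: C, alpha good} for the analytic map $s\mapsto\rho(\rot(s)\varphi(s))$. For each long interval $I\subset\mathcal{VB}_t$ with witness $v\in\mathcal U$, Lemma~\ref{lem: contraction long int} applied with $t_0=t$, $t_1=\psi(t)$ gives
\[
  \sup_{s\in I}\|h_t(s)v\|\;\le\;C_3 R\, e^{-\delta_2\psi(t)}=:\varepsilon_t,\qquad h_t(s):=b_{\psi(t)}\rho(a_t\rot(s)\varphi(s)g),
\]
so $\mathcal{VB}_t\subset\{s\in[0,1]:\exists v\in\mathcal U,\ \|h_t(s)v\|<\varepsilon_t\}$. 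Now apply Kleinbock--Margulis to $h_t$ on $[0,1]$; Lemma~\ref{lem: C, alpha good} supplies uniform $(C_0,\alpha_0)$-goodness of $s\mapsto\|A\rho(\rot(s)\varphi(s))w\|$ and all its exterior-power analogues, providing the KM input. Either (a) $\sup_{s\in[0,1]}\|h_t(s)[W]\|\ge\rho^*:=C_1 R$ for every primitive rational subspace $W\subseteq\R^N$ (with integer representative $[W]\in\wedge^{\dim W}\Z^N$), in which case KM yields $|\mathcal{VB}_t|\le C(\varepsilon_t/\rho^*)^{\alpha_0/N}$, which is $\le C_2 e^{-\delta\psi(t)}$ provided $\delta<\alpha_0\delta_2/N$, so option~(1) holds; or (b) some primitive rational $W$ violates this bound.

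In case~(b), since the diagonal $b$-flow has eigenvalues linear in $\psi(t)$ and $\rot(\cdot)$ is bounded, $\|b_{\psi(t)}^{-1}\|\le e^{C_4\psi(t)}$, and hence
\[
  \sup_{s\in[0,1]}\|\wedge^{\dim W}\rho(a_t\varphi(s)g)[W]\|\;\le\;C_5\, e^{C_4\psi(t)}\rho^*\;\le\;C_5\rho^*\, e^{C_4\delta t}.
\]
The Shah--Yang basic lemma applied to $\wedge^{\dim W}\rho$ then forces $[W]$ to be $\wedge^{\dim W}\rho(\SL_d(\R))$-invariant: under the non-degeneracy hypothesis on $\varphi$ (no weakly constraining pencils, no partial flag subvarieties), the above left-hand side grows at some rate $e^{\eta t}$ with $\eta>0$ unless $[W]$ is invariant, and $\delta$ is chosen so that $C_4\delta<\eta$. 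From the resulting invariant rational subspace and a witness $v_I\in\mathcal U\cap W$ for a long interval $I$, $\|v_I\|\le C_1 R$ follows because invariance turns $\rho(a_t\varphi(s)g)v_I$ into $\rho(\varphi(s)g)v_I$, a bounded-distortion curve in $s$. The main obstacle is precisely this last step: one must calibrate $\delta$ so that the rescaling factor $e^{C_4\psi(t)}$ is strictly dominated by the basic lemma's expansion rate $e^{\eta t}$, and invoke a sufficiently uniform form of the basic lemma applicable to all candidate rational subspaces $W$.
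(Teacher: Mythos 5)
Your skeleton (contraction lemma on each long interval, then Kleinbock--Margulis, then the basic lemma of Shah--Yang on the failure mode) is exactly the paper's, but there are two genuine gaps in how you close the argument.

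First, the failure mode of Kleinbock--Margulis produces a rational subspace $U_t$ that may vary with $t$, and the basic lemma (Theorem~\ref{thm: Yang1}) needs a \emph{fixed} vector $u$ with $\pi_+(\rho(\varphi(s)g)u)=0$ for \emph{all} $s\in[0,1]$. You flag this as ``the main obstacle'' but do not resolve it. The paper resolves it by supposing alternative (b) occurs along a sequence $t_j\to\infty$, passing to a subsequence where $\dim U_{t_j}$ is constant and the normalized pure integral vectors $u_j/\norm{u_j}$ converge to some pure $u$, and then using discreteness of $\bigwedge^m\Z^N$ (so $\norm{u_j}\gg 1$) together with the lower bound $\norm{\rho(a_{t_j})\pi_+(w)}\gg \euler^{\alpha t_j}\norm{\pi_+(w)}$ to force $\pi_+(\rho(\varphi(s)g)u)\equiv 0$ in the limit. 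Only then does the basic lemma apply, and in the irreducible case the conclusion is a \emph{contradiction} (an invariant proper subspace of an irreducible representation), so alternative (b) is simply excluded for $t$ large.

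Second, and more seriously, your derivation of option (2) does not work. You propose to extract an invariant witness $v_I\in\mathcal U\cap W$ from the KM-bad subspace $W$, but there is no reason whatsoever that a vector $v\in\mathcal U$ witnessing a long interval lies in the subspace produced by the KM dichotomy; these two objects are unrelated. In the paper, option (2) arises from an entirely different mechanism: one decomposes $\R^N=W_0\oplus\bigoplus_{i>0}W_i$ into the trivial isotypic component $W_0$ and irreducible summands (possible since $\rho$ is defined over $\Q$), applies the already-proven irreducible case to each projection $\pi_i(v)$ with $i>0$, and concludes that if option (1) fails then some witness $v$ must have $\pi_i(v)=0$ for all $i>0$, i.e.\ $v$ itself is a $\rho(G)$-invariant integer vector of norm $\leq R$. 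Your argument never decomposes $\rho$ and never confronts the case where a witness $v\in\mathcal U$ is itself invariant (in which case $\{s:\norm{\rho(a_t\varphi(s)g)v}<R\}$ can be all of $[0,1]$ for every $t$ and option (1) is hopeless), so option (2) is not actually established.
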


We shall heavily use the following result from linear dynamics (often referred to as 'basic lemma') -- see also Shah's basic lemma in \cite[\S4]{Shah-translates1} as well as \cite[\S4]{ShahYang-translates}.

\begin{theorem}[{Yang \cite{Yang-translates}}]\label{thm: Yang1}
  Let $\varphi: [0,1]\to G$ be an analytic curve whose image in $P\backslash G$ is not contained in any weakly constraining pencil.
  Let $\rho: G \to \SL_N(\R)$ be a representation, let $V^+\subset \R^N$ be the sum of all eigenspaces of eigenvalue $>1$ for $\rho(a_t)$, $t\geq 0$, and write $\pi_+: \R^N \to V^+$ for the projection with respect to the weight decomposition.
  Let $v \in \R^N$ be a nonzero vector such that for all $s \in [0,1]$
  \begin{align}\label{eq: nonexpandedcurve}
    \pi_+(\rho(\varphi(s))v) = 0.
  \end{align}
  Then $\rho(G)v$ is closed. Moreover, if the image $\varphi$ in $P\backslash G$ is not contained in any partial flag subvariety with respect to $\{a_t\}$, then $v$ is $G$-invariant.
\end{theorem}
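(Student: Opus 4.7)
The strategy is to translate the hypothesis on $v$ into an algebraic condition on $G$, exploit analyticity/algebraicity to promote that condition from the curve $\varphi([0,1])$ to a large subvariety, and then classify the resulting subvariety of $P\backslash G$ in terms of pencils and partial flag subvarieties.

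\textbf{Step 1 (Reductions).} Since $G=\SL_d(\R)$ is semisimple and $\rho$ is an algebraic representation, decompose $\rho=\bigoplus_i \rho_i$ into irreducibles and $v=\sum_i v_i$ correspondingly. The condition $\pi_+(\rho(\varphi(s))v)=0$ splits over $i$, so it suffices to treat one irreducible summand at a time (observing that the $G$-orbit of a sum is closed if and only if each projection is, and $G$-invariance passes to summands). I therefore assume $\rho$ is irreducible.

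\textbf{Step 2 (Algebraic envelope of the condition).} Consider
\[
Y_v\;=\;\{g\in G:\rho(g)v\in V^{\leq 0}\}\;=\;\{g\in G:\pi_+(\rho(g)v)=0\},
\]
which is a real algebraic subvariety of $G$. Since $V^{\leq 0}$ is $P$-invariant, $Y_v$ is left $P$-invariant, so it descends to an algebraic subvariety $\overline{Y}_v\subseteq P\backslash G$. The hypothesis says $\varphi([0,1])\subseteq Y_v$; by analyticity of $\varphi$ and algebraicity of $Y_v$, the Zariski closure of $\varphi([0,1])$ inside $P\backslash G$ is contained in $\overline{Y}_v$, and in particular $\overline{Y}_v$ is non-empty and contains the image curve.

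\textbf{Step 3 (Case $\overline{Y}_v = P\backslash G$).} Here $\rho(g)v\in V^{\leq 0}$ for all $g\in G$, so the $G$-invariant subspace $W:=\mathrm{span}(\rho(G)v)$ lies in $V^{\leq 0}$. On $W$, all weights of $\{a_t\}$ are $\leq 0$. But the weights of $\{a_t\}$ on any algebraic $G$-representation are symmetric about $0$ under the Weyl group of a maximal torus through $\{a_t\}$, so all weights on $W$ equal $0$, i.e.\ $\rho(a_t)$ acts trivially on $W$. Since $\{a_t\}$ is not central in the simple group $G$, its Lie-algebra $G$-orbit spans $\mathfrak{g}$, so no nontrivial irreducible $\rho$ can have $\rho(a_t)$ trivial; hence $W$ is trivial and $v$ is $G$-invariant. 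This gives both conclusions simultaneously.

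\textbf{Step 4 (Classification of proper $\overline{Y}_v$, the main obstacle).} Suppose $\overline{Y}_v\subsetneq P\backslash G$. The hard part is to show that any irreducible component $Z$ of $\overline{Y}_v$ containing the image of $\varphi$ is either (i) a weakly constraining pencil $\mathfrak{P}_{W,r}$, or (ii) a partial flag subvariety $PHg$ for a reductive $H\supseteq\{a_t\}$. The idea is to exploit the algebraic/linear structure of the defining condition: identifying $P\backslash G\cong\Gr(m,d)$, the set $\overline{Y}_v$ is cut out by the vanishing of the $V^+$-component of $\rho(g)v$, which is a Schubert-type condition on the flag given by $g$. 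Following Yang's analysis via the weight decomposition of $\rho$ relative to $\{a_t\}$ (and the dictionary between pencils and weakly unstable Schubert varieties recalled at the end of \S\ref{sec: notations and preliminaries}), any proper such locus must be cut out either by a linear obstruction on $V=g\R^m$ with ratio $\dim(W)/r\leq d/m$ (yielding a weakly constraining pencil) or by the orbit of a proper reductive subgroup containing $\{a_t\}$ (yielding a partial flag subvariety). This classification is the content of the basic lemma and is where the structure theory of \cite{ABRS} and \cite{Yang-translates} enters.

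\textbf{Step 5 (Conclusion).} The first hypothesis rules out case (i), and the strengthened hypothesis additionally rules out case (ii). Under the first hypothesis alone, one is forced into case (ii): then $\overline{Y}_v\subseteq PHg$, and the inclusion $\mathrm{Stab}_G(v)\supseteq g^{-1}Hg\cap\mathrm{Stab}_G(v)$ coming from the $H$-structure, together with the algebraic identification of $\rho(G)v$ with $G/\mathrm{Stab}_G(v)$, forces $\mathrm{Stab}_G(v)$ to be reductive (by Matsushima/Mostow, closedness of $\rho(G)v$ is equivalent to reductivity of the stabilizer). Under both hypotheses, cases (i) and (ii) are excluded, leaving only Step 3, so $v$ is $G$-invariant. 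The main obstacle is genuinely Step 4, for which I would simply invoke the classification in \cite[Props.~6.4, 6.5]{Yang-translates}.
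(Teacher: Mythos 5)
There is a genuine gap, and it sits exactly where you locate the ``main obstacle'': Step 4 is not proved, and the statement you propose to invoke does not exist in the form you need. Propositions 6.4 and 6.5 of \cite{Yang-translates} only give the dictionary between weakly constraining pencils and weakly unstable Schubert varieties; they do not say that every irreducible component of $\overline{Y}_v=\{Pg:\pi_+(\rho(g)v)=0\}$ \emph{is} a pencil or a partial flag subvariety, and that assertion is stronger than what is true. The correct inputs are conditional containments, split according to whether $\rho(G)v$ is closed: if it is \emph{not} closed, then the curve is contained in a weakly unstable Schubert variety, i.e.\ a weakly constraining pencil (this is \cite[Props.~2.7, 6.5]{Yang-translates} and is how the first conclusion is obtained); if it \emph{is} closed and $v$ is not fixed, a separate argument is needed to produce a partial flag subvariety containing the curve. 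That second argument is the real content of the ``moreover'' part and is what the paper supplies (following \cite[Props.~5.4, 5.6]{Yang-translates}): by \eqref{eq: nonexpandedcurve} the limits $w_s=\lim_{t\to\infty}\rho(a_t\varphi(s))v$ exist and, by the already established closedness, equal $\rho(\xi(s)\varphi(s))v$ for some $\xi(s)\in G$; the stabilizer $F$ of $w_0$ is reductive by Matsushima's criterion and contains $\{a_t\}$; a local computation shows one may take $\xi(s)\in P$, and then $\varphi(s)\in PFg$ for all $s$ with $g=\xi(0)\varphi(0)$, so either $F=G$ (and $v$ is $G$-invariant) or the curve lies in a partial flag subvariety. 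Your Step 5 runs this implication backwards: from a containment $\overline{Y}_v\subseteq PHg$ you want to deduce reductivity of $\mathrm{Stab}_G(v)$ and hence closedness, but $H$ has no a priori relation to $\mathrm{Stab}_G(v)$ and no argument is offered; in the correct proof the reductive group is \emph{constructed} as the stabilizer of the limit vector, after closedness has been secured from the pencil hypothesis.

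A secondary but real defect is Step 1: closedness of an orbit does not decompose over the isotypic summands in either direction, so the reduction to irreducible $\rho$ is not available for the closedness conclusion (it is fine for $G$-invariance). For the flow $a_t=\diag(\euler^{t},\euler^{-t})$ acting on $\R^2$, the orbit of $(1,1)$ is closed while the orbits of its projections are not; conversely, for $\SL_2(\R)$ acting diagonally on pairs of binary quadratic forms, the point $v=(xy,\;x^2+xy)$ has both coordinate orbits closed (both forms are nondegenerate), yet $\diag(t,t^{-1})\cdot v\to(xy,xy)$ as $t\to\infty$ and $(xy,xy)$ is not in the orbit, so the joint orbit is not closed (Luna's criterion requires $\mathrm{Stab}_G(v_1)v_2$ closed, not merely $Gv_2$ closed). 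The paper's argument never performs this reduction, and neither should you; the rest of your outline (Steps 2 and 3, and the use of the pencil hypothesis to rule out the non-closed case) is in the right spirit but, as it stands, the theorem's second assertion is entirely unproved in your proposal.
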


\begin{proof}
  We merely point out how to combine the proven statements in \cite{Yang-translates}.
  If $\rho(G)v$ were not closed, \cite[Props.~2.7, 6.5]{Yang-translates} would imply that the image of $\varphi$ in $P\backslash G$ would be contained in a weakly constraining pencil yielding a contradiction.
  So $\rho(G)v$ is closed. We now proceed as in \cite[Props.~5.4, 5.6]{Yang-translates}.
  In particular, by \eqref{eq: nonexpandedcurve} there exists for every $s \in [0,1]$ an element $\xi(s) \in G$ with
  \begin{align*}
    \lim_{t \to \infty} \rho(a_t \varphi(s))v = \rho(\xi(s)\varphi(s))v =: w_s
  \end{align*}
  Fix $s \in [0,1]$ and let $F_s<G$ be the stablizer subgroup of $w_s$.
  By Matsushima's criterion, $F_s$ is reductive.
  It is easy to see that $F_s$ contains $\{a_t\}$.

  We claim that we may assume $\xi(s) \in P$.
  Indeed, by definition of $w_s$ there exists for every $t>0$ some $\varepsilon_t$ in a small neighborhood of the identity of $G$ such that $w_s = a_t \xi(s) a_{t}^{-1} \varepsilon_t w_s$.
  In particular, $a_t \xi(s)^{-1} a_{t}^{-1}$ is bounded in $G/F$ which implies that $\xi(s)\in FP$ by a local calculation (indeed, $\xi(s)a_t^{-1}$ is in a neighborhood of the identity coset of $F\backslash G$ for $t$ large enough). This proves the claim.

  Now let $g = \xi(0)\varphi(0)$ and $F = F_0$.
  As $\xi(0) \in P$ and $a_t \varphi(s)v$ converges, $a_t \xi(0)\varphi(s)g^{-1}F$ converges in $G/F$.
  Moreover, $\xi(0)\varphi(s)g^{-1}$ is in a neighborhood of the identity for $s$ close to $0$ and a Lie algebra calculation shows that $\xi(0)\varphi(s)g^{-1} \in PF$.
  Hence, $\varphi(s) \in PFg$ for all $s \in [0,1]$ where $PFg$ is a proper flag subvariety of $P \backslash G$ unless $F =G$ i.e.~$v$ is $G$-invariant.
\end{proof}

\begin{proof}[Proof of Proposition \ref{prop: excep int}]
  Suppose for simplicity first that $\rho$ is irreducible (and nonzero).
  Let $\rot(\cdot)$ be as in \eqref{eq: choice of z()}, let $\delta>0$ to be determined, and let $t \mapsto b_t $ be as in Lemma~\ref{lem: contraction long int}.
  If $\delta$ is sufficiently small, then for any interval $J \subset [0,1]$ with $|J|\geq \euler^{-rt+\psi(t)}$ and any $v \in \R^N$ with
  \begin{align*}
    \max_{s \in J}\norm{\rho(a_t\varphi(s))v} \leq R
  \end{align*}
  we have for some constant $c = c(\rho)>0$
  \begin{align}\label{eq: applycontr}
    \max_{s \in J}\norm{b_{\psi(t)}\rho(\rot(s)a_t\varphi(s))v} \leq c R \euler^{-\delta \psi(t)}.
  \end{align}

  We now invoke the quantitative non-divergence results by Kleinbock and Margulis \cite[Thm.~5.2]{KleinbockMargulis98}. To that end, note first that for any $v \in \R^N$ the map
  \begin{align*}
    s \mapsto \norm{b_{\psi(t)}\rho(a_t \rot(s)\varphi(s))v}
  \end{align*}
  is $(C, \alpha)$-good for some $(C, \alpha)$ depending on $\rho$ and $\varphi$.
  Indeed, we may apply Lemma~\ref{lem: C, alpha good} for $s \mapsto \rho(\rot(s)\varphi(s))$ and $g = b_{\psi(t)}\rho(a_t)$.
  In particular, \cite[Thm.~5.2]{KleinbockMargulis98} implies that for some sufficiently small $\kappa>0$ one of the following options holds for every $t>0$:
  \begin{enumerate}[(a)]
    \item The measure of the set of $s \in [0,1]$ so that
          \begin{align*}
            \norm{b_{\psi(t)}\rho(a_t \rot(s)\varphi(s)g)v} \leq c \euler^{-\delta\psi(t)}R
          \end{align*}
          for some nonzero $v \in \Z^N$
          is $\ll \euler^{-\kappa\delta\psi(t)}$.
    \item There exists a rational subspace $U_t \subset \Q^N$ such that
          \begin{align*}
            \sup_{s \in [0,1]}\norm{b_{\psi(t)}\rho(a_t\rot(s) \varphi(s)g)U_t(\Z)}
            \leq c \euler^{-\frac{\delta}{2}\psi(t)}R.
          \end{align*}
  \end{enumerate}
  We prove that option (b) can only occur in a bounded set of times (depending on $g, R$); this concludes the special case when $\rho$ is irreducible in view of \eqref{eq: applycontr}.
  Suppose option (b) holds for a sequence $t_j \to \infty$.
  Restricting to a subsequence we may assume that the subspaces $U_j=U_{t_j}$ have equal dimension $m$.
  Also, we may assume that if $u_j \in \bigwedge^m \Z^N$ is a pure integral vector corresponding to $U_j(\Z)$ then $\frac{u_j}{\norm{u_j}} \to u$ for some (possibly irrational) pure vector $u \in \bigwedge^m W_i$ of norm one.
  Note that (b) implies
  \begin{align}\label{eq: slowexp}
    \sup_{s \in [0,1]}\norm{\rho(a_{t_j}\varphi(s)g)u_j} \ll \euler^{\star\psi({t_j})}R.
  \end{align}

  Let $\pi_+: W = \bigwedge^m \R^N \to W^+$ be the projection onto the $a_t$-expanded subspace $W^+$.
  By definition, $W^+$ is the direct sum of all $a_t$-eigenspaces with eigenvalue $\euler^{\lambda t}$ for some $\lambda>0$ and the projection is with respect to the eigenspace decomposition.
  Note that there exists $\alpha>0$ (depending on $\rho$) such that for any $w \in W$
  \begin{align*}
    \norm{\rho(a_t)\pi_+(w)} \gg \euler^{\alpha t}\norm{\pi_+(w)}.
  \end{align*}
  In particular, for any $s \in [0,1]$ by discreteness of $W(\Z)$
  \begin{align*}
    \norm{\rho(a_{t_j}\varphi(s)g)u_{j}}
     & \gg \norm{\pi_+(\rho(a_{t_j}\varphi(s)g)u_{j})}
    \gg \euler^{\alpha t_j} \norm{\pi_+(\rho(\varphi(s)g)u_j)}           \\
     & \gg \euler^{\alpha t_j}\norm{u_j}\norm{\pi_+(\rho(\varphi(s)g)u)}
    \gg \euler^{\alpha t_j}\norm{\pi_+(\rho(\varphi(s)g)u)}
  \end{align*}
  which implies together with \eqref{eq: slowexp} that
  \begin{align*}
    \sup_{s \in [0,1]}\norm{\pi_+(\rho(\varphi(s)g)u)} \ll \euler^{-\alpha t_j+ \star \psi(t_j)}R.
  \end{align*}
  For $j \to \infty$ this implies that $\pi_+(\rho(\varphi(s)g)u) = 0$. By Theorem~\ref{thm: Yang1} we obtain that $u$ is $\rho(G)$-invariant.
  But $u$ is pure and hence the corresponding subspace is also $\rho(G)$-invariant. By irreducibility of $\rho$, this is a contradiction, which concludes this special case.

  Now suppose that $\rho$ is general.
  Write $\R^N$ as a direct sum of subspaces
  \begin{align}\label{eq: decomp irred}
    \R^N = \bigoplus_{i=0}^k W_i
  \end{align}
  where $W_0$ is the subspace of $G = \SL_d(\R)$-invariant vectors and where $W_i$ for $i>0$ are irreducible subspaces (all defined over $\Q$).
  This is possible because $\rho$ is defined over $\Q$.
  We may replace $R$ by a constant multiple and assume that the norm $\norm{\cdot}$ is given by $\norm{v} = \max_i \norm{\pi_i(v)}$ where $\pi_i: \R^N \to W_i$ denotes the $G$-invariant projection relative to \eqref{eq: decomp irred}.
  We let $M>0$ be a sufficiently large integer so that $M \pi_i(\Z^N) \subset W_i(\Z)$.

  Let $I \subset [0,1]$ be an interval with $|I|\geq \euler^{-rt+\psi(t)}$ and with
  \begin{align*}
    I \subset \{s \in [0,1]: \norm{\rho(a_t \varphi(s)g)v} <R\}
  \end{align*}
  for some $v \in V(\Z)\setminus\{0\}$.
  Then for every $i=0, \ldots, k$, the interval $I$ is contained in
  \begin{align*}
    \{s \in [0,1]: \norm{\rho(a_t \varphi(s)g)\pi_i(v)}< R\}.
  \end{align*}
  By the already proven special case, we know that for all $i>0$, the union of all connected components of the above sets of length at least $\euler^{-rt + \psi(t)}$ and with $\pi_i(v) \neq 0$ has measure $\ll MR \euler^{-\kappa\delta\psi(t)}$ for some $\delta, \kappa>0$ depending only on~$N$.
  Then either (1) holds or there exists an interval $I$ and a vector $v\in \mathcal{U}$ as above with the additional property that $\pi_i(v) = 0$ for all $i > 0$. But then $v \in W_0(\Z)$ as well as $\norm{v}\leq R$ so (2) holds.
\end{proof}

\section{The Short Intervals Lemma}
\label{sec: short intervals lemma}
For every interval $I = [a-b, a+b]\subset \RR$ denote by $|I| := 2b$ the length and by $\gamma . I = [a-\gamma b, a+\gamma b]$ for all $\gamma>0$ the centered $\gamma$-dilation as in \S\ref{sec: Calpha}.

\begin{lemma}[Short Intervals]\label{lem: martingale interval lemma}
  Fix $0<\beta<1<\gamma$ and a function $f: \NN\to \NN$ such that $\lim_{n\to \infty}\frac{f(n)}{n/\log\log n} = 0$.
  Let $(A_n)_{n=0}^\infty$ be a sequence of sets of intervals in $[-1,1]$ such that for every $I\in A_n$ we have
  $\beta^{n+f(n)} \le |I| \le \beta^{n-f(n)}$ and the intervals $\{\gamma.I: I\in A_n\}$ are disjoint.
  Let
  $B_n = \bigcup_{I\in A_n}I$. Then
  \[\limsup_{N\to \infty}\frac{1}{N}\#\{1\le n\le N: x\in B_n\} \le \frac{1}{\gamma} \text{ a.s. for }x\in [-1,1]\]
\end{lemma}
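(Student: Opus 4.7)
The plan is to realise $S_N := \sum_{n=1}^N \mathbbm{1}_{B_n}(x)$ as a controlled martingale plus a drift whose asymptotic average is dictated by the disjointness hypothesis, and then to argue that both behave suitably almost surely. The first moment bound is immediate: since $\{\gamma.I : I \in A_n\}$ is a disjoint family of intervals inside a bounded region, $\sum_{I \in A_n} |I| = \tfrac{1}{\gamma}\sum_{I \in A_n}|\gamma.I| \le \tfrac{2}{\gamma}$, giving $\mathbb{P}(x \in B_n) \le 1/\gamma$ and hence $\mathbb{E}[S_N] \le N/\gamma$.

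The key geometric input is a covariance-type estimate exploiting the scale separation. For $n < m$, within each $I \in A_n$ the intervals $I' \in A_m$ contained in $I$ have mutually disjoint $\gamma$-dilates, which sit inside an $O(\beta^{m-f(m)})$-neighbourhood of $I$. This yields $|B_m \cap I| \le |I|/\gamma + O(\beta^{m - f(m)})$, and summing over $I \in A_n$ with $\#A_n \ll \beta^{-n-f(n)}$ produces
\[ |B_n \cap B_m| \;\le\; |B_n|/\gamma \;+\; O\bigl(\beta^{(m-n) - f(n) - f(m)}\bigr). \]
Now set $\mathcal{F}_n = \sigma(\mathbbm{1}_{B_1}, \ldots, \mathbbm{1}_{B_n})$ and decompose $S_N = M_N + D_N$ where $M_N = \sum_{n=1}^N \bigl(\mathbbm{1}_{B_n} - \mathbb{E}[\mathbbm{1}_{B_n} \mid \mathcal{F}_{n-1}]\bigr)$ is a bounded-increment martingale and $D_N = \sum_{n=1}^N \mathbb{E}[\mathbbm{1}_{B_n} \mid \mathcal{F}_{n-1}]$. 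Azuma--Hoeffding combined with Borel--Cantelli along, say, $N_k = k^2$ gives $M_N/N \to 0$ almost surely, so it remains to prove $D_N \le N/\gamma + o(N)$ almost surely.

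For the drift, on any atom $A$ of $\mathcal{F}_{n-1}$ all of whose constituent intervals are of length much larger than $\beta^{n - f(n)}$, the above covariance estimate localises to give $\mathbb{E}[\mathbbm{1}_{B_n} \mid A] \le 1/\gamma + o(1)$. The central remaining task, and the main obstacle, is to bound the measure of those $x$ whose atom contains a component of length comparable to $\beta^{n - f(n)}$: the past filtration generates at most $O(\beta^{-(n-1) - f(n-1)})$ cut points, but those inherited from a single $A_k$ are pairwise separated by at least $(\gamma - 1)\beta^{k + f(k)}$, so such short atoms can arise only through cross-scale coincidences of endpoints. The growth restriction $f(n) = o(n / \log \log n)$ is precisely what is needed to absorb the resulting error $\beta^{-f(n)}$ when summed against a Borel--Cantelli subsequence of roughly $\exp(n / \log \log n)$-growth; monotonicity of $N \mapsto S_N$ then interpolates the almost sure bound back to every $N$.
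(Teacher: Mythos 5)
Your first-moment bound and the estimate $|B_m\cap I|\le |I|/\gamma+O(\beta^{m-f(m)})$ are fine, and the Azuma step for $M_N$ is fine. The gap is in the drift bound, and it is not an edge case that the growth condition on $f$ can ``absorb'': with the filtration $\mathcal F_{n-1}=\sigma(\mathbbm 1_{B_1},\dots,\mathbbm 1_{B_{n-1}})$ the bound $\EE[\mathbbm 1_{B_n}\mid\mathcal F_{n-1}]\le 1/\gamma+o(1)$ is simply false, and in the worst case the decomposition is circular. Here is a concrete family of inputs. Since the hypothesis only requires $|I|\in[\beta^{n+f(n)},\beta^{n-f(n)}]$, once $f(n)+f(n+1)\ge 1$ a single length $L=\beta^{n+f(n)}$ is admissible at all scales $m$ with $n-O(f(n))\le m\le n+O(f(n))$. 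So take $A_n$ to be a maximal family of intervals of length $L$ with disjoint $\gamma$-dilates (so $|B_n|\approx 2/\gamma$) and set $A_{n+1}=A_{n+2}=\dots=A_{n+k}=A_n$ for $k\asymp f(n)$; then repeat with a fresh block. Now $B_m=B_{m-1}$ is $\mathcal F_{m-1}$-measurable for every $m$ inside a block, so $\EE[\mathbbm 1_{B_m}\mid\mathcal F_{m-1}]=\mathbbm 1_{B_m}$ there: your martingale increments vanish, $D_N=S_N+O(N/f(N))$, and the statement ``$D_N\le N/\gamma+o(N)$ a.s.''\ is exactly the lemma you set out to prove. (The conclusion of the lemma still holds for this example --- each block contributes $2f(n)$ hits with probability $1/\gamma$, giving average $1/\gamma$ and showing the constant is sharp --- but your decomposition makes no progress towards it.) The same example refutes the heuristic that short atom components only arise from ``cross-scale coincidences of endpoints'': here every component of the relevant atom has length exactly $\max_{I\in A_n}|I|$, with no coincidences, and the set of $x$ in such atoms has measure $\approx 1/\gamma$, not $o(1)$.

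The paper's proof avoids this trap by changing the probability space rather than conditioning on the sets $B_j$ themselves: the random point is built as $y_\infty=c_0+\sum_n x_n\beta^{T_n}$ with $x_n$ i.i.d.\ uniform, so that conditionally on the past the point is uniform on a single interval of \emph{known} length $2\beta^{T_m}$, and Claim~\ref{claim: volume of intersection with sparse intervals} applies directly to that interval. The scales are then grouped into blocks $I_m$ separated by gaps of size $f'+2a$, precisely so that the conditioning interval at the start of a block is longer than every interval of every $A_n$ counted inside the block; the supermartingale runs over blocks, and the hypothesis $f(n)=o(n/\log\log n)$ enters only through the Azuma error $\sqrt{\sum_k(\#I_k)^2\log m}\big/\sum_k\#I_k$. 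If you want to salvage a filtration-based argument you must replace $\sigma(\mathbbm 1_{B_1},\dots,\mathbbm 1_{B_{n-1}})$ by a filtration of this controlled-interval type; no bound on the measure of ``bad atoms'' of the natural filtration can work, because as shown above $B_n$ itself may be measurable with respect to the past.
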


Before proving the lemma we state an important tool which was introduced by Azuma, \cite[Thm.~1]{azuma1967weighted}. 
The form we use here is not identical to the one given in Azuma's work, and we will provide a reduction from \cite[Thm.~1]{azuma1967weighted} to Lemma \ref{lem: azuma ineq} below.
\begin{definition}
  Let $(M_n)_{n=0}^\infty$ be a sequence of random variables such that $M_n$ is $\Sigma_n$ integrable, where $\Sigma_n$ are a filtration of a probability space $(\Sigma, \mu)$.
  The sequence $(M_n)_{n=0}^\infty$ is called a \emph{martingale} if $\EE(M_n|\Sigma_{n-1}) = M_{n-1}$. It is called \emph{super-martingale} if $\EE(M_n|\Sigma_{n-1}) \le M_{n-1}$.
\end{definition}
\begin{lemma}[Azuma's inequality]\label{lem: azuma ineq}
  For every super-martingale $(M_n)$ and a sequence of positive numbers $c_n$ such that $|M_n-M_{n-1}|\le c_n$ for every $n\ge 1$ one has
  \begin{align}\label{eq: azuma ineq}
    \limsup_{n \to \infty} \frac{M_n}{\sqrt {2 (c_1^2+\ldots+c_n^2)\log n}} \le 1
  \end{align}
  almost surely.
\end{lemma}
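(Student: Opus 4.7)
The plan is to deduce the almost-sure estimate \eqref{eq: azuma ineq} from Azuma's classical exponential tail bound (\cite[Thm.~1]{azuma1967weighted}) together with a direct application of the Borel--Cantelli lemma. Recall that the tail bound asserts that for any super-martingale $(M_n)$ with $|M_n - M_{n-1}| \le c_n$ and any $\lambda \ge 0$,
\[
\PP(M_n - M_0 \ge \lambda) \le \exp\!\Bigl(-\frac{\lambda^2}{2S_n}\Bigr), \qquad S_n := c_1^2+\ldots+c_n^2.
\]
Azuma's paper states this for martingales; the extension to super-martingales comes from the observation that the underlying Hoeffding-type bound $\EE(e^{\theta D}\mid \Sigma) \le e^{\theta^2 c^2/2}$, valid when $|D|\le c$ and $\EE(D\mid \Sigma)=0$, actually holds whenever $\EE(D\mid \Sigma)\le 0$ and $\theta \ge 0$, by a one-line convexity argument. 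Iterated conditioning then yields $\EE(e^{\theta(M_n - M_0)}) \le e^{\theta^2 S_n/2}$, and Chernoff's bound with $\theta = \lambda/S_n$ produces the stated tail inequality.

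Given this, fix $\varepsilon > 0$ and apply the tail bound with $\lambda_n = (1+\varepsilon)\sqrt{2 S_n \log n}$ (assuming as we may that $S_n \to \infty$, since otherwise $M_n$ is an $L^2$-bounded super-martingale, hence a.s.\ convergent, while $\sqrt{2S_n\log n}\to \infty$ and the conclusion is trivial). This gives
\[
\PP\bigl(M_n - M_0 \ge (1+\varepsilon)\sqrt{2 S_n \log n}\bigr) \le n^{-(1+\varepsilon)^2}.
\]
Since $(1+\varepsilon)^2>1$, the series $\sum_n n^{-(1+\varepsilon)^2}$ converges, so the first Borel--Cantelli lemma implies that almost surely only finitely many of these events occur. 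Since $M_0$ is integrable and hence a.s.\ finite, the contribution of $M_0$ to $M_n/\sqrt{2S_n\log n}$ tends to zero. Intersecting the resulting full-measure events over $\varepsilon = 1/k$ for $k \in \NN$ yields \eqref{eq: azuma ineq}.

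The argument is entirely standard and I do not foresee a genuine obstacle. The two mildly subtle points are (i) checking that the Hoeffding moment bound survives the relaxation from conditional mean zero to conditional mean non-positive (needed precisely because we allow super-martingales rather than martingales), and (ii) disposing of the degenerate regime $S_n \not\to \infty$ and of the additive $M_0$ contribution; both are one-line observations as described above.
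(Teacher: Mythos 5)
Your proof is correct, but it takes a different route from the paper's, most visibly in how the super-martingale case is handled. The paper quotes Azuma's Eq.~(3.1) as a black box for martingales (that equation already being the almost-sure $\limsup$ statement for weighted sums of bounded martingale differences) and then reduces the super-martingale case to the martingale case by constructing, via an explicit convex-combination trick applied to each conditional increment, a dominating martingale $M_n'\ge M_n$ on the same filtration with the same increment bounds. You instead re-derive the exponential tail bound $\PP(M_n-M_0\ge\lambda)\le \exp(-\lambda^2/(2S_n))$ from scratch, observing that the conditional Hoeffding lemma only needs $\EE(D\mid\Sigma)\le 0$ and $\theta\ge 0$ (which is correct: the convexity bound gives $\EE(e^{\theta D}\mid\Sigma)\le\cosh(\theta c)+c^{-1}\EE(D\mid\Sigma)\sinh(\theta c)\le e^{\theta^2c^2/2}$), and then you conclude by Borel--Cantelli with $\lambda_n=(1+\varepsilon)\sqrt{2S_n\log n}$. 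Your argument is more self-contained; the paper's is shorter on the page but leans on the cited result and on a structural domination lemma that is reusable elsewhere. One small remark: your parenthetical treatment of the regime $S_n\not\to\infty$ is both unnecessary and slightly off --- a super-martingale with $\sum c_n^2<\infty$ need not be $L^2$-bounded or convergent (its compensator can diverge, e.g.\ $M_n=-\sum_{k\le n}1/k$), but this does not matter: since the $c_n$ are positive, $S_n\ge c_1^2>0$, so $\sqrt{2S_n\log n}\to\infty$ in all cases and your main Borel--Cantelli argument already yields $\limsup_n M_n/\sqrt{2S_n\log n}\le 1+\varepsilon$ without any case distinction.
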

\begin{proof}[{Reduction to \cite[Thm.~1]{azuma1967weighted}}:]
  First, whenever $(M_n)_{n=0}^\infty$ is a martingale, \eqref{eq: azuma ineq} follows from \cite[Eq.~(3.1)]{azuma1967weighted} where $x_n = (M_{n}-M_{n-1})/c_n$ and $a_{nk}=c_k$.

Denote the filtration of $(M_n)_{n=0}^\infty$ by $(\Sigma_n)_{n=0}^\infty$.
If $M_n$ is only a super-martingale, we will find a martingale $(M'_n)_{n=0}^\infty$ defined on the same filtration such that $M_n\le M'_n$ for all $n\ge 0$. 
We will then get the desired result by using the lemma already established for $(M_n')_{n=0}^\infty$.

We will construct $(M_n')_{n=0}^\infty$, by setting $M_0' = M_0$ and recursively choosing $M_n'$ by demanding $M_n'-M_{n-1}' \ge M_n-M_{n-1}$ and $\EE(M_n'-M_{n-1}'|\Sigma_{n-1}) = 0$. To this end, we show how to construct for every random variable $X$ with $\EE(X) \le 0$
  another random variable $X'$ with
  \begin{itemize}
    \item
          $\EE(X')=0$,
    \item
          $X'\ge X$ a.s., and
    \item if $X\in [-c,c]$ almost surely then so is $X'$.
  \end{itemize}
  Then we could use this construction to define $M_n'-M_{n-1}'$ from $M_n-M_{n-1}$ conditioned on $\Sigma_{n-1}$.
  If $X$ is the constant random variable, define $X'=0$.
  If $X\le 0$ a.s., define $X' = X-\EE(X)$. Otherwise, denote by $\max(X)$ the minimal value such that $\PP(X \le \max(X)) = 1$. By the assumption, $\max(X) > 0$ and $\max(X) > \EE(X)$.
  Then we choose $X' = \frac{-\EE(X)}{\max(X)-\EE(X)}\max(X) + \frac{\max(X)}{\max(X)-\EE(X)}X$, this is a convex combination between $X$ and $\max(X)$, and it can be seen to satisfy the conditions above.

\end{proof}
\begin{claim}\label{claim: volume of intersection with sparse intervals}
  Let $A$ be a collection of intervals such that $\gamma.I$ are disjoint for $I\in A$. Let $J$ be an interval in $\RR$.
  Then the total length of $J\cap \bigcup_{I\in A}I$ is at most $|J|/\gamma + 2\max_{I\in A}|I|$.
\end{claim}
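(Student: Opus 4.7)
The plan is to classify each interval $I \in A$ that meets $J$ with positive measure according to whether its $\gamma$-dilation $\gamma.I$ is entirely contained in $J$ or not. Let $L = \max_{I \in A}|I|$.

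First, I would deal with the interior contribution. Let $A^{\mathrm{in}} = \{I \in A : \gamma.I \subset J\}$. Since $I \subset \gamma.I$, these intervals satisfy $I \subset J$, so their contribution to $|J \cap \bigcup_{I} I|$ equals $\sum_{I \in A^{\mathrm{in}}} |I|$. The $\gamma$-dilations $\{\gamma.I : I \in A^{\mathrm{in}}\}$ are disjoint subsets of $J$, hence
\[
\sum_{I \in A^{\mathrm{in}}} \gamma |I| \;=\; \sum_{I \in A^{\mathrm{in}}} |\gamma.I| \;\leq\; |J|,
\]
which yields $\sum_{I \in A^{\mathrm{in}}} |I| \leq |J|/\gamma$.

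Next, for the boundary contribution, let $A^{\partial}$ be the collection of $I \in A$ with $I \cap J$ of positive measure but $\gamma.I \not\subset J$. The key observation is that any such interval's $\gamma$-dilation must contain an endpoint of $J$: indeed, if $\gamma.I$ contained no endpoint of $J$ and $\gamma.I \not\subset J$, then $\gamma.I$ would be disjoint from the interior of $J$, forcing $I \cap J$ to have measure zero, a contradiction. Since $J$ has exactly two endpoints and the dilations $\gamma.I$ are pairwise disjoint, each endpoint lies in at most one dilation, so $|A^{\partial}| \leq 2$. Each such interval contributes at most $|I| \leq L$ to the total, giving $\sum_{I \in A^{\partial}} |I \cap J| \leq 2L$.

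Putting the two contributions together yields the claimed bound $|J|/\gamma + 2L$. The only mildly subtle point is the dichotomy in the second step — ensuring that every $I$ with $I \cap J$ of positive measure falls into exactly one of the two classes — but this follows from the elementary fact that $\gamma.I$ is a connected interval containing $I$, so if $\gamma.I$ leaves $J$ while $I$ still meets $J$ in positive measure, it must cross an endpoint of $J$. (A harmless edge case is when a single $\gamma.I$ contains all of $J$; then disjointness precludes any other interval from meeting $J$, and the single contribution $|I| \leq L \leq |J|/\gamma + 2L$ is well within the desired bound.)
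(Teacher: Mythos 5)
Your proof is correct and yields exactly the stated bound, but it organizes the argument differently from the paper. The paper's proof keeps all intervals meeting $J$ together: it enlarges $J$ by $\gamma\max_{I\in A}|I|$ on each side so that the enlarged interval contains \emph{every} dilation $\gamma.I$, and then disjointness gives $\sum_{I}\gamma|I|\le |J|+2\gamma\max_{I\in A}|I|$, i.e.\ $\sum_I|I|\le |J|/\gamma+2\max_{I\in A}|I|$ in one stroke. You instead split into the intervals whose dilations lie inside $J$ (packed into $J$ itself, giving the $|J|/\gamma$ term) and the at most two intervals whose dilations straddle an endpoint of $J$ (giving the $2\max_{I\in A}|I|$ term). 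Both arguments rest on the same disjointness-packing idea; yours is marginally longer but has the virtue of explaining the constant $2$ concretely as "one straddling interval per endpoint," whereas the paper's is shorter at the cost of a slightly more generous enlargement. Your connectedness argument for why a dilation not contained in $J$ but meeting $J$ in positive measure must contain an endpoint is sound, and the edge case you flag is indeed harmless.
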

\begin{proof}
  Let $a = \max_{I\in A}|I|$.
  Without loss of generality assume all $I\in A$ intersect $J$ non-trivially.
  Then
  \[\left| J\cap \bigcup_{I\in A}I \right| \le \sum_{I\in A}|I|.\]
  On the other hand, if we enlarge $J$ by $\gamma a$ on each side it would contain all intervals $\gamma.I$ for $I\in A$.
  Hence $|J| + 2\gamma a\ge \sum_{I\in A}\gamma|I|$. The result follows.
\end{proof}

\begin{proof}[Proof of Lemma \ref{lem: martingale interval lemma}]
  We first perform a few reductions.
  Fix $\delta\in (0,1)$.
  Let $f'(n) := \left\lfloor\delta \frac{n}{\log\log (n+10)}\right\rfloor$; for all but finitely many $n \in \N$ we have $f'(n)\ge f(n)$. Note that $f'$ is monotone increasing and satisfies 
  \begin{align}\label{eq: f' doesnt increasing too much}
    f'(n+1) \le f'(n) + 1,
  \end{align}
  for all $n$. 
  Without changing the correctness of the claim, replace $A_n$ with an empty set whenever $f'(n)/2 < f(n)$.
  Hence it is sufficient to show that if for every $I\in A_n$ we have
  $\beta^{n+f'(n)/2} \le |I| \le \beta^{n-f'(n)/2}$ then
  \begin{align}\label{eq: reformulation 1 of martingale stuff}
    \limsup_{N\to \infty}\frac{1}{N}\#\{1\le n\le N: x\in B_n\} \le \frac{1}{\gamma} + o_\delta(1)\text{ a.s. for }x\in [-1,1].
  \end{align}
  Note that the sequence $(n+f'(n)/2)_{n=1}^\infty$ goes over all integers once, up to a negligible proportion.
  Whenever possible, replace $A_n$ by $A_n' = A_{n'}$ where $n'$ satisfies $n = n' + f(n')/2$ (and by the empty set otherwise).
  With this choice $\beta^{n} \le |I| \le \beta^{n-f'(n)}$ for every $I\in A_n$.

  Fix $a>0$ to be determined later.
  Fix a sequence of positive integers $n_0<n_1<n_2<\dots$ that satisfy $n_{i+1} = n_{i}+f'(n_{i+1})+2a$. Such a sequence exists for every $n_0$ by Eq. \eqref{eq: f' doesnt increasing too much}.
  Denote
  \begin{align*}
    I_m = \{n_{2m-1}, n_{2m-1} + 1, \dots, n_{2m} - 1\}
  \end{align*}
  and let $T_m = n_{2m} + a$.
  Note that for every $n\in I_m$ we have $n\le T_m - a$ and 
  \[n-f'(n) \stackrel{\eqref{eq: f' doesnt increasing too much}}{\ge} n_{2m-1} - f'(n_{2m-1}) = T_{m-1} + a.\]
  Let $J_m = \bigcup_{k=1}^m I_k$.
  It is enough to show that
  \begin{align} \label{eq: what martingale lemma needs}
    \limsup_{m\to \infty} \frac{1}{\#J_m}\sum_{n\in J_m} \mathbbm{1}(x\in B_{n}) & =
    \limsup_{m\to \infty} \frac{1}{\#J_m}\sum_{k=1}^m \sum_{n\in I_k} \mathbbm{1}(x\in B_{n}) \\&\nonumber
    \le \frac{1}{\gamma} + o_{\delta, a}(1)\text{ a.s. for }x.
  \end{align}
  This inequality together with the similar inequality for the shifted sequence $n_i' = n_{i+1}$ proves the bound of Eq.~\eqref{eq: reformulation 1 of martingale stuff} for $N \in (n_i)_{i=0}^\infty$. Since $\frac{n_i-n_{i-1}}{n_i}\xrightarrow{i\to \infty}0$ this sequence is dense enough to imply Eq.~\eqref{eq: reformulation 1 of martingale stuff}.

  Consider a sequence of i.i.d.~uniform $x_n\in [-1,1]$ and some $c_0\in [-1,1]$.
  Define $y_m = c_0 + \sum_{n=0}^{m-1} x_n\beta^{T_n}$ for all $m \in \NN\cup \{\infty\}$.
  Since the law of $y_\infty$ is continuous with positive density on $\left[c_0-\sum_{n=0}^{\infty} \beta^{T_n}, c_0+\sum_{n=0}^{\infty} \beta^{T_n}\right]$, it is sufficient to bound from above
  \[
    \limsup_{m\to \infty} \frac{1}{\#J_m}\sum_{k=1}^{m} \sum_{n\in I_k} \mathbbm{1}(y_\infty\in B_{n}).
  \]
  for all $c_0\in \RR$.
  Since $|y_{\infty}-y_m| < \beta^{T_{m}}\frac{1}{1-\beta}$ we can relate the condition of $y_\infty$ being inside an interval with a similar condition on $y_m$ in the following sense:
  For every $n\in I_m$ any interval $I\in A_n$ is of length $|I|\ge \beta^n$. If $y_\infty$ lies in $I$ then $y_m$ lies in $(1+\varepsilon).I$ for
  $\varepsilon = 2\beta^{a}\frac{1}{1-\beta}>\beta^{T_{m}-n}\frac{1}{1-\beta}$.

  Hence it is sufficient to bound
  \[\limsup_{m\to \infty} \frac{1}{\#J_m}\sum_{k=1}^m \sum_{n\in I_k} \mathbbm{1}(y_k\in B_{n}'), \]
  for $B_n' = \bigcup_{I\in A_n}(1+\varepsilon).I$.

  For every $n\in I_{m+1}$, the conditioned probability that $y_{m+1}\in B_{n}'$ given $y_m$
  is at most $p = \frac{1+\varepsilon}{\gamma} + (1+\varepsilon)\beta^{a}$, by Claim
  \ref{claim: volume of intersection with sparse intervals} applied to
  $J = [y_m-\beta^{T_m}, y_m + \beta^{T_m}]$, $A = \{(1+\varepsilon).I: I\in A_n\}$ and $\gamma/(1+\varepsilon)$ (instead of $\gamma)$.
  Hence the conditional expectation satisfies
  $\EE(\sum_{n\in I_{m+1}} \mathbbm{1}(y_{m+1}\in B_{n}')|y_m)\le p\#I_{m+1}$.
  Thus
  \[M_m = \sum_{k=0}^m \sum_{i\in I_k} \left(\mathbbm{1}(y_\infty\in B_{n}) - p \right), \]
  is a super-martingale, with differences $|M_m-M_{m-1}|\leq \#I_m$.
  Azuma's Inequality in Lemma~\ref{lem: azuma ineq} implies that
  \begin{align}\label{eq: from azuma}
    \limsup_{m\to \infty} \frac{M_m}{\sqrt {2(\sum_{k=0}^m (\#I_k)^2)\log m}} \le 1.
  \end{align}
  We will show that
  \begin{align}\label{eq: resulting ineq}
    \limsup_{m\to \infty} \frac{\sqrt {2(\sum_{k=0}^m (\#I_k)^2)\log m}}{\sum_{k=0}^m\#I_k} = o_\delta(1),
  \end{align}
  as the product of Eqs.~\eqref{eq: from azuma} and \eqref{eq: resulting ineq} implies Eq. \eqref{eq: what martingale lemma needs}.
  Since
  $\sum_{k=0}^m (\#I_k)^2\le \#I_m(\sum_{k=0}^m \#I_k)$, it is sufficient to have
  \[\limsup_{m\to \infty} \frac{\#I_m \log m}{\sum_{k=0}^m\#I_k} = o_\delta(1).\]

  Denote by $M = n_{2m}$.
  Note that $\lim_{m \to \infty}\frac{\sum_{k=0}^m\#I_k}{M} = 1/2$.
  Rewriting $m$ in terms of $M$ we have $m \le \sum_{n=1}^{M} \frac{1}{f'(n)}$. Indeed,
  \[m  = \sum_{k=1}^m 1 = \sum_{k=1}^m \sum_{n\in I_k} \frac{1}{\#I_k} = 
  \sum_{k=1}^m \sum_{n\in I_k} \frac{1}{f'(n_{2k})} \le 
  \sum_{k=1}^m \sum_{n\in I_k} \frac{1}{f'(n)} 
  \le \sum_{n=1}^{M} \frac{1}{f'(n)}.\]
  Using $\#I_m = f'(M)$ we need to show
  \[
    \limsup_{M \to \infty} \frac{f'(M) \log \left(\sum_{n=1}^{M} \frac{1}{f'(n)} \right)}{M} = o_\delta(1).
  \]
  We may replace $f'(M)$ by $\delta M / \log\log M$ and obtain
  \begin{align*}
    \frac{\delta \log \left(\sum_{n=1}^{M} \frac{\log \log n}{\delta n}\right)}{\log\log M} \le
    \frac{\delta \log \left(\log M \log \log M\right) - \delta \log \delta}{\log\log M} + o_M(1) \xrightarrow{M\to \infty} \delta.
  \end{align*}
  Rewriting this in terms of $\#I_m$ we obtain
    \[\limsup_{m\to \infty} \frac{\#I_m \log m}{\sum_{k=0}^m\#I_k} \le 2\delta,\]
  and hence 
  \begin{align}\label{eq: resulting ineq2}
    \limsup_{m\to \infty} \frac{\sqrt {2(\sum_{k=0}^m (\#I_k)^2)\log m}}{\sum_{k=0}^m\#I_k} \le 2\sqrt {\delta}.
  \end{align}

  Altogether, using $\varepsilon= 2\beta^a\frac{1}{1-\beta}$ and $p = (1+\varepsilon)/\gamma + (1+\varepsilon)\beta
    ^a$,
  \begin{align*}
    \limsup_{m\to \infty} \frac{1}{\#J_m}\sum_{k=0}^m \sum_{i\in I_k} \mathbbm{1}(y_k\in B_{n}') & \le
    p + \limsup_{m\to \infty} \left(\frac{M_m}{\#J_m}\right) \stackrel{\eqref{eq: from azuma}\cdot\eqref{eq: resulting ineq2}}{\le} p + 2\sqrt {\delta} \\
    & \le  \frac{1 + \frac{2\beta^{a}}{1-\beta}}{\gamma} + \left(1+\frac{2\beta^{a}}{1-\beta}\right)\beta^{a} + 2\sqrt {\delta} \\
    & =\frac{1}{\gamma}+ O(\beta^a+\sqrt{\delta})
  \end{align*}
  This implies \eqref{eq: what martingale lemma needs} as desired.
\end{proof}

\section{Quantitative non-divergence}
\label{sec: nondivergence}

We write
\begin{align*}
  \height: X \to \R_{>0}, \ g\Gamma \mapsto \inf\{\norm{gv}: 0 \neq v \in \Z^d\}^{-1}
\end{align*}
for the standard height function on $X$. This is a proper and continuous function on $X$.
In this section, we prove the following result.

\begin{theorem}[Quantitative non-divergence]\label{thm: nondiv}
  Let $\varphi: [0,1]\to G$ be an analytic curve so that the image of $\varphi$ in $P \backslash G$ is not contained in any weakly constraining pencil or in any proper flag subvariety with respect to $\{a_t\}$.
  Let $x \in X$, and let $\mu_{s, T}=\frac{1}{T}\int_0^T \delta_{a_t\varphi(s)x}\de t$ for any $T>0$.
  There is a constant $\kappa>0$ so that for almost every $s \in [0,1]$
  \begin{align*}
    \limsup_{T \to \infty}\mu_{s, T}(\{y \in X: \height(y) > \varepsilon^{-1}\}) \ll \varepsilon^\kappa.
  \end{align*}
\end{theorem}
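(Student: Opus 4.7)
The plan is to exploit that $\operatorname{ht}(g\Gamma) > \varepsilon^{-1}$ is equivalent to the existence of a nonzero $v \in \Z^d$ with $\|gv\| < \varepsilon$, thereby reducing the theorem to a shortest-vector problem for the standard representation $\rho_{\mathrm{std}}\colon \SL_d \to \SL_d$. This representation has no nonzero $\SL_d(\R)$-invariant vector, so alternative~(2) in Proposition~\ref{prop: excep int} is automatically vacuous. Writing $x = g_0\Gamma$, I would first discretize time: because $\operatorname{ht}(a_t\varphi(s)x)$ changes by only a bounded multiplicative factor as $t$ varies over a unit interval, it suffices to bound the upper density along $t\in\mathbb N$ of the bad set
\[
B_t(\varepsilon) \;=\; \{s \in [0,1] : \operatorname{ht}(a_t\varphi(s)x) > \varepsilon^{-1}\}
\]
uniformly for almost every $s$.

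Following the combinatorial framework of \S\ref{sec: ingredients}, for each integer $t$ and primitive $v \in g_0\Z^d$ let $\hat I_v(t)$ denote a maximal subinterval of $[0,1]$ on which $\|a_t\varphi(s)v\|\le 1$; Lemma~\ref{lem: C, alpha good} bounds the number of such components per $v$ by a constant $M$. Split these into \emph{long} components ($|\hat I_v(t)|\ge\euler^{-rt+\psi(t)}$) and \emph{short} components, for $\psi(t)=c\log t$ to be chosen. For the long case, Proposition~\ref{prop: excep int} yields that the set $\mathcal L_t$ of $s$ lying in some long component has measure $\ll \euler^{-\delta\psi(t)}=t^{-c\delta}$; taking $c>1/\delta$ and applying Borel--Cantelli over integer times, for a.e.~$s$ one has $s\in\mathcal L_t$ for only finitely many $t$, so the long components contribute nothing to the limsup.

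For short components, $(C,\alpha)$-goodness forces the subset $J\subset\hat I_v(t)$ where $\|a_t\varphi(s)v\|\le\varepsilon$ to be a single interval of length $\le C\varepsilon^\alpha|\hat I_v(t)|$, and Lemma~\ref{lem:dilation} guarantees that its $\gamma$-dilation with $\gamma=(C\varepsilon^\alpha)^{-1}$ stays inside $\hat I_v(t)$. After a harmless dyadic binning by length (or padding up to scale $\beta^t$ with $\beta=\euler^{-r}$), collecting the resulting short intervals across all primitive $v$ into a family $A_t$ and setting $f(t):=\psi(t)/r=o(t/\log\log t)$ as required, Lemma~\ref{lem: martingale interval lemma} yields that for a.e.~$s$ the upper density of $t$ for which $s$ lies in the short bad set is at most $1/\gamma\ll\varepsilon^\alpha$. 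Summing the two contributions gives the theorem with $\kappa=\alpha$.

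The main obstacle is verifying the disjointness of dilated short intervals required by Lemma~\ref{lem: martingale interval lemma}: at any $s$ high in the cusp, many distinct primitive vectors can be simultaneously short, so the $J$ coming from different $v$ (and \emph{a fortiori} their $\gamma$-dilations) may overlap. I would resolve this via a Vitali-type selection of a maximal $\gamma$-separated subfamily of $A_t$, which covers the original bad set with only bounded overcount and thus yields a bounded-multiplicity version of Lemma~\ref{lem: martingale interval lemma}; alternatively, restricting to $\varepsilon$ below a threshold and invoking Minkowski's second theorem on the parent components $\hat I_v(t)\cap \hat I_{v'}(t)$ controls the number of simultaneously short vectors. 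Either route costs only a constant factor in the final density bound and preserves the $O(\varepsilon^\alpha)$ estimate.
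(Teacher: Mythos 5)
Your overall architecture (discretize time, reduce to short vectors, split into long components handled by Proposition~\ref{prop: excep int} plus Borel--Cantelli and short components handled by Lemma~\ref{lem: martingale interval lemma}) matches the paper's proof, and you correctly locate the crux: the disjointness hypothesis of the Short Intervals Lemma fails if you only track rank-one data, because a point high in the cusp sees many simultaneously short primitive vectors. The problem is that neither of your proposed fixes closes this gap. Minkowski's second theorem does \emph{not} bound the number of primitive $v\in\Lambda$ with $\|v\|\le\varepsilon$ by a constant: if $\Lambda$ contains a rank-$2$ sublattice of covolume $\delta\ll\varepsilon^2$, that sublattice alone contains $\asymp\varepsilon^2/\delta$ such primitive vectors, and this quantity is unbounded as $\delta\to 0$, so there is no ``bounded-multiplicity'' version of Lemma~\ref{lem: martingale interval lemma} to invoke. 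The Vitali route fails for a quantitative reason: extracting a subfamily with disjoint $\gamma$-dilations forces you to cover the discarded intervals by the $O(\gamma)$-dilations of the selected ones, and since $\gamma=(C\varepsilon^\alpha)^{-1}$ those dilations have length comparable to the full parent components $\hat I_v(t)$; the Short Intervals Lemma applied to that enlarged family returns a density bound of order $1$ rather than $\varepsilon^\alpha$, so the gain you are trying to prove evaporates exactly in the step meant to rescue it.

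The paper's resolution is the Kleinbock--Margulis flag combinatorics, which is the actual content of Lemma~\ref{lem: kleinbockmargulis} and of most of \S\ref{sec: nondivergence}: one works with primitive \emph{sublattices of all ranks} (equivalently, pure vectors in all exterior powers $\bigwedge^k\Z^d$), builds a tree of nested intervals labelled by partial flags, and shows that the resulting intervals can be grouped into at most $1+6+\cdots+6^{d-1}$ families, each consisting of pairwise \emph{disjoint} intervals, such that every $s$ with $\height(a_t\varphi(s)x)>\varepsilon^{-1}$ lies in an $\varepsilon$-sublevel interval of one of these families. Only then is Lemma~\ref{lem: martingale interval lemma} applied, once per family, with Proposition~\ref{prop: excep int} applied to every exterior power of the standard representation (not just $\rho_{\mathrm{std}}$) to discard the long intervals. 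A secondary, fixable omission: the Short Intervals Lemma also needs a \emph{lower} bound $|I|\ge\beta^{n+f(n)}$ on the interval lengths, and the $\varepsilon$-sublevel components can be arbitrarily short; the paper enlarges each to the containing component of the $2\varepsilon$-sublevel set and uses Lemma~\ref{lem: directionalasymp} to get length $\gg\euler^{-rt}$, whereas naive ``padding to scale $\beta^t$'' risks re-breaking disjointness.
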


The authors suspect that the assumptions on $\varphi$ can be weakened (see e.g.~the difference in assumptions between \cite[Thms.~1.1, 1.3]{Yang-translates}); the current proof however requires the strong assumptions given above.
The proof uses ingredients from the quantitative non-divergence results of Kleinbock and Margulis \cite{KleinbockMargulis98}, a construction of which we summarize in the following lemma.

\begin{lemma}\label{lem: kleinbockmargulis}
  For any $t>0$ there exists a finite collection $\mathcal{P}_t$ consisting of sets of intervals with the following properties:
  \begin{itemize}
    \item $|\mathcal{P}_t|\leq 1+6+6^2+\ldots+6^{d-1}$.
    \item Any $P \in \mathcal{P}_t$ is a set of disjoint intervals.
    \item For any $P \in \mathcal{P}_t$ and any $J\in P$ there exists a primitive sublattice $\Lambda_J < \Z^d$ so that $J$ is a connected component of
          \begin{align*}
            \{s \in [0,1]: \norm{a_t \varphi(s)g \Lambda_J}\leq 1\}.
          \end{align*}
    \item For any $\varepsilon\in (0,1)$ and any $s\in [0,1]$ with $\height(a_t \varphi(s)x) > \frac{1}{\varepsilon}$ there exists $P \in \mathcal{P}_t$ and $J \in P$ with $\norm{a_t\varphi(s)\Lambda_J}\leq \varepsilon$.
  \end{itemize}
\end{lemma}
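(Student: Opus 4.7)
The plan is to follow the non-divergence scheme of Kleinbock and Margulis \cite{KleinbockMargulis98}, adapted to our analytic-curve setting, organizing the relevant intervals according to the rank of the witnessing sublattice. Throughout write $x = g\Gamma$, so that $\height(a_t\varphi(s)x)^{-1}$ equals the infimum of $\norm{a_t\varphi(s)gv}$ over nonzero $v\in \Z^d$.

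First, I would stratify by rank. For each $k\in \{0,1,\ldots,d-1\}$, define $\mathcal{C}_k$ as the collection of intervals $J\subseteq [0,1]$ which arise as connected components of $\{s: \norm{a_t\varphi(s)g\Lambda}\le 1\}$ for some primitive sublattice $\Lambda<\Z^d$ of rank $k$; the case $k=0$ contributes the single interval $[0,1]$ (taking $\Lambda = 0$ with wedge-norm $1$), accounting for the leading $1$ in the bound. To each $J\in \mathcal{C}_k$ I would attach a canonical $\Lambda_J$, chosen to minimize the norm at a distinguished interior point of $J$, in the spirit of the Kleinbock-Margulis minimality selection. The covering property is then immediate: given $s$ with $\height(a_t\varphi(s)x)>1/\varepsilon$, Minkowski's theorem produces a nonzero primitive $v\in \Z^d$ with $\norm{a_t\varphi(s)gv}\le \varepsilon$, so $s$ lies in some $J\in \mathcal{C}_1$ with $\norm{a_t\varphi(s)g\Lambda_J}\le \varepsilon$ by the minimality choice (or one pushes to higher rank if the canonical selection forces this).

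The main step is to disjointify each $\mathcal{C}_k$. Here I would invoke the wedge-product inequality at the heart of the Kleinbock-Margulis argument: if $\Lambda, \Lambda'$ are primitive sublattices of rank $k$, then (after primitivization) $\norm{g\Lambda}\cdot \norm{g\Lambda'}$ controls $\norm{g(\Lambda\cap \Lambda')}\cdot \norm{g(\Lambda+\Lambda')}$. This, combined with the maximality of the intervals in $\mathcal{C}_k$ and the $(C,\alpha)$-goodness supplied by Lemma~\ref{lem: C, alpha good}, limits the local multiplicity of $\mathcal{C}_k$ at any $s\in [0,1]$ to at most $6^k$. Because the intersection graph of any family of real intervals is perfect (chromatic number equals clique number), $\mathcal{C}_k$ admits a partition into at most $6^k$ subfamilies of pairwise disjoint intervals, each forming one element of $\mathcal{P}_t$. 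Summing across $k=0,1,\ldots,d-1$ yields $|\mathcal{P}_t|\le 1+6+\cdots+6^{d-1}$.

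The main obstacle is the multiplicity bound with the explicit constant $6$; extracting this factor requires a careful combinatorial tracking of the wedge-product inequality along the rank filtration, though only the dimensional boundedness of $|\mathcal{P}_t|$ actually matters for the subsequent proof of Theorem~\ref{thm: nondiv}. Crucially, the uniform $(C,\alpha)$-goodness of $s\mapsto \norm{a_t\varphi(s)g\Lambda}$ in $\Lambda$ and $t$ afforded by Lemma~\ref{lem: C, alpha good} is what permits the Kleinbock-Margulis machinery (originally formulated for polynomial trajectories) to apply uniformly across $t$ and all primitive sublattices in our analytic-curve setting.
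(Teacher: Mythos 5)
Your plan---take \emph{all} connected components of $\{s:\norm{a_t\varphi(s)g\Lambda}\le 1\}$ stratified by the rank of $\Lambda$, bound the pointwise multiplicity of each stratum $\mathcal{C}_k$ by $6^k$, and color---has a genuine gap at the multiplicity bound, and this is not a technicality one can ``carefully track'': the bound is false. For a fixed $t$ the number of rank-$k$ primitive sublattices with $\norm{a_t\varphi(s_0)g\Lambda}\le 1$ at a given $s_0$ is not controlled by $d$. For instance, if $a_t\varphi(s_0)g\Z^d$ contains two independent vectors of length $\delta$ (which happens for suitable $g$, $s_0$, $t$), then on the order of $\delta^{-2}$ primitive vectors are short at $s_0$, each contributing a (generically distinct) interval of $\mathcal{C}_1$ containing $s_0$. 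The submodularity inequality $\norm{\Lambda\cap\Lambda'}\,\norm{\Lambda+\Lambda'}\le\norm{\Lambda}\,\norm{\Lambda'}$ does not bound the number of short sublattices of a given rank; in Kleinbock--Margulis it serves to single out a distinguished \emph{flag} among them, not to count them. Since for interval graphs the chromatic number equals the clique number (i.e.\ the pointwise multiplicity), your coloring of $\mathcal{C}_k$ would need a number of colors depending on $t$ and $x$, which destroys exactly the uniformity $|\mathcal{P}_t|\le 1+6+\cdots+6^{d-1}$ that the application in Theorem~\ref{thm: nondiv} requires (there one fixes $j\le k_d$ and lets $t$ vary).

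The missing idea is that one must \emph{select} a small subfamily of components, and the selection has to be organized along flags rather than by rank. The paper builds a tree of depth $\le d$ whose vertices are pairs $(I,\mathcal{F})$ with $\mathcal{F}$ a partial flag of primitive sublattices; the children of $(I_1,\mathcal{F}_1)$ are obtained by taking, among the maximal components inside $I_1$ attached to lattices \emph{compatible} with $\mathcal{F}_1$, a covering subfamily of minimal cardinality. A minimal cover by intervals has pointwise multiplicity $\le 2$, and any component attached to a descendant meets at most $3$ of the children's intervals; grouping children by their index modulo $6$ then produces at most $6^{\ell}$ pairwise-disjoint families at depth $\ell$, whence the stated cardinality. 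Your parenthetical ``or one pushes to higher rank if the canonical selection forces this'' is precisely where the remaining work lives: once components are discarded (as they must be to get bounded multiplicity), the vector that is $\varepsilon$-short at $s$ need not label any retained interval, and the covering property is recovered only by descending the tree---at each step either some member of the current flag is already $\varepsilon$-short at $s$, or one adjoins to the flag a compatible primitive lattice manufactured from the short vector (the primitivization of $\Lambda_1+\Z v$). Without this flag-indexed selection and descent, your argument establishes neither the cardinality bound nor the covering property.
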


Here, $\norm{\Lambda}$ denotes the covolume of a lattice $\Lambda\subset \R^d$ in the subspace it spans. 

\subsection{Proof of Lemma~\ref{lem: kleinbockmargulis}}
As mentioned, a proof of Lemma~\ref{lem: kleinbockmargulis} is (at least implicitly) contained in \cite{KleinbockMargulis98}.
We give here a direct proof for the reader's convenience, which will occupy this subsection.

Write $x = g \Gamma$.
We first construct a finite tree $T_{\varphi, t}$ labeled by pairs $(I, \mathcal{F})$ where $I \subset [0,1]$ is a closed interval and $\mathcal{F}$ is a partial flag of primitive sublattices of $\Z^d$.
This labeling will satisfy that whenever $(I',\mathcal{F}')$ is a descendant of $(I,\mathcal{F})$ we have $I'\subseteq I$ and $\mathcal{F}\subset \mathcal{F}'$ by the construction to follow.
Moreover, any vertex (labeled by) $(I, \mathcal{F})$ in the tree will satisfy
  \begin{itemize}
    \item For all $\Lambda\in \mathcal{F}$
          \begin{align}\label{eq:proptreeA}
            \sup_{s \in I} \norm{a_t \varphi(s)g\Lambda} \leq 1.
          \end{align}
    \item For all $\Lambda\not\in\mathcal{F}$ compatible with $\mathcal{F}$
          \begin{align}\label{eq:proptreeB}
            \sup_{s \in I} \norm{a_t \varphi(s)g\Lambda} \ge 1.
          \end{align}
Here, a primitive lattice $\Lambda \subset \Z^d$ is compatible with a partial flag of primitive lattices $\mathcal{F}$ if $\mathcal{F}\cup\{\Lambda\}$ is a flag.
  \end{itemize}
We point out that we will consider an additional different labeling later on.
  
To construct the root of the tree $T_{\varphi, t}$, let $I_0 = [0,1]$ and $\cF_0$ be a maximal partial flag of primitive sublattices of $\Z^d$ that satisfies that \eqref{eq:proptreeA} holds for all for all $\Lambda\in \cF$ on the interval $I_0$. 
The root $(I_0, \cF_0)$ satisfies \eqref{eq:proptreeB} by its construction.

To construct the rest of the tree $T_{\varphi, t}$, it is sufficient to construct the children of a pair $(I_1, \mathcal{F}_1)$ already in the tree.
For every interval $I \subset I_1$ denote by $\cL_{I}$ the finite collection of primitive lattices $\Lambda\not\in \mathcal{F}_1$ that are compatible with $\mathcal{F}_1$ and for which $I$ is a connected component of $\{s \in I_1: \norm{a_t \varphi(s)g\Lambda}\leq 1\}$. 
  Let $\mathcal{I}$ be the collection of intervals $I \subset I_1$ for which $\cL_{I}$ is nonempty. Note that $\mathcal{I}$ depends on the considered vertex $(I_1,\mathcal{F}_1)$ whose children we are constructing.
  
  \begin{claim}
    The set $\mathcal{I}$ is finite.
  \end{claim}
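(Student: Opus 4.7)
The plan is to bound the number of intervals in $\mathcal{I}$ by separately controlling how many primitive sublattices $\Lambda$ contribute to some $\cL_{I}$ with $I \in \mathcal{I}$, and how many connected components each such $\Lambda$ gives rise to inside $I_1$. I would organise this by splitting according to the rank $k \in \{1,\dots,d-1\}$ of $\Lambda$; via the Pl\"ucker embedding, each primitive $\Lambda$ of rank $k$ compatible with $\mathcal{F}_1$ corresponds to a primitive pure vector $v_\Lambda \in \bigwedge^k \Z^d$ (unique up to sign), and $\norm{a_t\varphi(s)g\Lambda} = \norm{\rho_k(a_t\varphi(s)g) v_\Lambda}$, where $\rho_k$ denotes the $k$-th exterior power representation.

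For the first ingredient I would show that only finitely many $v_\Lambda$ can appear. If $\Lambda \in \cL_{I}$ for some $I \in \mathcal{I}$, there exists $s \in I_1$ with $\norm{\rho_k(a_t\varphi(s)g) v_\Lambda} \leq 1$. Since $s \mapsto \rho_k(a_t\varphi(s)g)$ is continuous on the compact interval $I_1$ with values in $\GL(\bigwedge^k \R^d)$, the operator norm of its inverse is bounded uniformly on $I_1$ by a constant depending only on $t, g, \varphi, k$. Hence $\norm{v_\Lambda}$ is uniformly bounded, and by discreteness of $\bigwedge^k \Z^d$ only finitely many such $v_\Lambda$ occur.

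For the second ingredient I would apply Lemma \ref{lem: C, alpha good} to the analytic map $s \mapsto \rho_k(\varphi(s))$ (taking the auxiliary matrix in the lemma to be $\rho_k(a_t)$ and the vector to be $\rho_k(g) v_\Lambda$) to conclude that $\{s \in [0,1] : \norm{\rho_k(a_t\varphi(s)g) v_\Lambda} \leq 1\}$ has at most $M = M(\varphi, k)$ connected components, uniformly in $\Lambda, t, g$. Summing this uniform bound over the finitely many contributing $\Lambda$ and the $d-1$ possible ranks yields the finiteness of $\mathcal{I}$. I do not anticipate any serious obstacle here; the argument amounts to bookkeeping together with the uniform bounds supplied by Lemma \ref{lem: C, alpha good}.
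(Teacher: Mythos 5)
Your proposal is correct and follows essentially the same two-step route as the paper: a compactness/discreteness argument showing that only finitely many primitive sublattices can have $\min_{s\in I_1}\norm{a_t\varphi(s)g\Lambda}\le 1$, followed by the observation that each such lattice contributes only finitely many connected components. The only cosmetic difference is that for the second step the paper argues directly from analyticity (the function $s\mapsto\norm{a_t\varphi(s)g\Lambda}^2$ is either constant or equals $1$ at finitely many points), whereas you invoke the uniform bound of Lemma~\ref{lem: C, alpha good}; both suffice.
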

  \begin{proof}
    For every $\Lambda$ compatible with $\cF_1$, since $s\mapsto \norm{a_t \varphi(s)g\Lambda}^2$ is analytic, we deduce that it is either constant $1$ or $\norm{a_t \varphi(s)g\Lambda}^2 = 1$ has finitely many solutions in $I_1$.
In either case, the set $\{s \in I_1: \norm{a_t \varphi(s)g\Lambda}\leq 1\}$ has only finitely many connected components.
    Hence it is enough to show that there is only finitely many primitive lattices $\Lambda$ compatible with $\cF$ such that $\min_{s\in I_1}\norm{a_t \varphi(s)g\Lambda} \le 1$. 
Any such lattice $\Lambda$ satisfies $\norm{\Lambda} \leq C$ for some constant $C>0$ depending on $\varphi,g,t$. Since there are finitely many (primitive) lattices of covolume at most $C$ we conclude.
  \end{proof}

We aim to cover $\bigcup_{I \in \mathcal{I}} I$ in an `optimal' manner through a subcollection.
Let $\mathcal{I}'$ denote the collection of intervals in $\mathcal{I}$ that are maximal with respect to inclusion. 

  \begin{claim}\label{claim: sup bigger then 1}
    For every primitive lattice $\Lambda$ compatible with $\cF_1$ and $I \in \mathcal{I}'$ we have that 
    \begin{align*}
      \sup_{s\in I}\norm{a_t \varphi(s)g\Lambda} \ge 1. 
    \end{align*}
  \end{claim}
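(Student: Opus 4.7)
The plan is to prove the claim by contradiction. Suppose there exists a primitive lattice $\Lambda$ compatible with $\cF_1$ and some $I \in \mathcal{I}'$ with $\sup_{s\in I}\norm{a_t \varphi(s)g\Lambda} < 1$. Note that $\Lambda \notin \cF_1$, since otherwise the compatibility assumption is vacuous (we may take this as implicit; alternatively, the statement is only meaningful when $\Lambda \notin \cF_1$, since lattices in $\cF_1$ already only satisfy the opposite bound \eqref{eq:proptreeA}).

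First, I would use compactness of $I$ and continuity of $s \mapsto \norm{a_t \varphi(s)g\Lambda}$ to pick some $\varepsilon > 0$ with $\sup_{s\in I}\norm{a_t \varphi(s)g\Lambda} \leq 1-\varepsilon$, and then extract an open (in $I_1$) neighborhood $U$ of $I$ on which $\norm{a_t \varphi(s)g\Lambda} < 1$ still holds. Next, I would let $I^\ast$ be the connected component of $I$ in the closed set $\{s \in I_1 : \norm{a_t \varphi(s)g\Lambda} \leq 1\}$. Since $I \subseteq U \subseteq I^\ast$ and $U$ is open in $I_1$ containing $I$, the interval $I^\ast$ must strictly contain $I$ whenever at least one endpoint of $I$ lies in the interior of $I_1$; the only remaining possibility is $I = I_1$.

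Case 1 would be $I^\ast \supsetneq I$. Here $\Lambda \in \cL_{I^\ast}$ by construction (as $\Lambda \notin \cF_1$ is compatible with $\cF_1$ and $I^\ast$ is a connected component of $\{s \in I_1 : \norm{a_t \varphi(s)g\Lambda} \leq 1\}$), so $I^\ast \in \mathcal{I}$; but $I^\ast \supsetneq I$ then contradicts the maximality of $I$ in $\mathcal{I}'$. Case 2 would be $I = I_1$, which gives $\sup_{s\in I_1}\norm{a_t \varphi(s)g\Lambda} < 1$, directly contradicting property \eqref{eq:proptreeB} at the parent vertex $(I_1, \cF_1)$.

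The main obstacle I anticipate is purely bookkeeping at the boundary: one has to carefully separate the situation in which $I$ has an endpoint coinciding with an endpoint of $I_1$ (where extension is only possible on one side) from the situation $I = I_1$ (where no extension is possible). The key observation is that if $\sup_{s\in I} < 1$ strictly, continuity lets us extend $I$ into the interior of $I_1$ at any interior-type endpoint of $I$, forcing $I^\ast \supsetneq I$ in every case except $I = I_1$; this clean dichotomy is what makes the two-case argument exhaustive.
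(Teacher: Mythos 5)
Your proposal is correct and follows essentially the same route as the paper: assume $\sup_{s\in I}\norm{a_t\varphi(s)g\Lambda}<1$, observe that the connected component of $\{s\in I_1:\norm{a_t\varphi(s)g\Lambda}\le 1\}$ containing $I$ would then strictly contain $I$ (contradicting maximality of $I$ in $\mathcal{I}$) unless $I=I_1$, in which case \eqref{eq:proptreeB} at $(I_1,\cF_1)$ is violated. Your explicit treatment of the case $\Lambda\in\cF_1$ and of the boundary bookkeeping only spells out what the paper's two-line argument leaves implicit.
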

  
  \begin{proof}
    If the claim fails for some interval $I\in \mathcal{I}'$, then $I$ is contained in the set $\{s \in I_1: \norm{a_t \varphi(s)g\Lambda} < 1\}$. 
    If $I \subsetneq I_1$ then this is a contradiction to the maximality of $I$ in $\mathcal{I}$. 
    If $I = I_1$ then we get a contradiction to \eqref{eq:proptreeB} for the vertex $(I_1,\cF_1)$. 
  \end{proof}
  
  Note that $\bigcup_{I\in  \mathcal{I}}I = \bigcup_{I\in  \mathcal{I}'}I$. 
  We take a subcollection $\mathcal{I}'' \subset \mathcal{I}'$ of minimal cardinality so that $\bigcup_{I\in  \mathcal{I}''}I = \bigcup_{I\in  \mathcal{I}}I$. Then necessarily
  \begin{align}\label{eq: sum le 2}
    \sum_{J \in \mathcal{I}''}1_J \leq 2, \qquad 
    \sum_{J \in \mathcal{I}''}1_J(\min(I_1)) \leq 1,\qquad
    \sum_{J \in \mathcal{I}''}1_J(\max(I_1))\leq 1.
  \end{align}
  
  \begin{claim}\label{claim: at most 3}
    Let $\Lambda<\Z^d$ be a lattice compatible with $\cF_1$. 
    Any connected component of $\{s \in I_1: \norm{a_t \varphi(s)g\Lambda}\leq 1\}$ intersects at most $3$ intervals in the collection $\mathcal{I}''$. 
  \end{claim}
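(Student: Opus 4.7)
The plan is as follows. I will first reduce to the case $\Lambda \notin \mathcal{F}_1$, since if $\Lambda \in \mathcal{F}_1$ then \eqref{eq:proptreeA} gives $\{s \in I_1 : \norm{a_t\varphi(s)g\Lambda} \le 1\} = I_1$, a trivial case not arising in the relevant tree construction. Under this reduction $\Lambda \in \mathcal{L}_J$, so $J$ itself belongs to $\mathcal{I}$.

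I argue by contradiction: suppose four distinct intervals $I_1, I_2, I_3, I_4 \in \mathcal{I}''$ intersect $J$, ordered by left endpoint. First, by the minimality of $\mathcal{I}''$ as a subcover of $\bigcup \mathcal{I}'$, one has $I_i \cap I_{i+2} = \emptyset$ for $i = 1, 2$: otherwise the left-endpoint ordering places $I_{i+1} \subset I_i \cup I_{i+2}$, so $I_{i+1}$ could be removed from $\mathcal{I}''$ without losing coverage. Next, since $J \in \mathcal{I}$, the maximality of each $I_i$ in $\mathcal{I}'$ rules out $I_i \subsetneq J$. A short case check using the disjointness from the previous step then rules out $I_i = J$ (one of $I_{i\pm 2}$ would fail to meet $J$) and $I_i \supsetneq J$ (one of $I_{i\pm 2}$ would then meet $I_i$). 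Hence each $I_i$ extends past exactly one endpoint of $J = [a,b]$, and by the ordering $I_1, I_2$ extend past $a$ while $I_3, I_4$ extend past $b$.

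The final contradiction will be algebraic. At $s = a$, the three distinct primitive sublattices $\Lambda$, $\Lambda_{I_1}$, and $\Lambda_{I_2}$ are each compatible with $\mathcal{F}_1$ and all satisfy $\norm{a_t\varphi(a)g\,\cdot\,} \le 1$. I will apply the Minkowski product inequality $\norm{g(\Xi_1 + \Xi_2)}\,\norm{g(\Xi_1 \cap \Xi_2)} \le \norm{g\Xi_1}\,\norm{g\Xi_2}$ for primitive sublattices $\Xi_1, \Xi_2$ pointwise on $I_1 \cap I_2$, combined with the global bounds $\norm{a_t\varphi(s)g\Lambda_{I_j}} \le 1$ on $I_j$ for $j=1,2$. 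This will produce a new primitive sublattice $\Lambda^*$ compatible with $\mathcal{F}_1$, not in $\mathcal{F}_1$, and with $\sup_{s \in I_1}\norm{a_t\varphi(s)g\Lambda^*} \le 1$, contradicting the maximality property \eqref{eq:proptreeB} of $\mathcal{F}_1$ on $I_1$.

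The main obstacle will be this last algebraic step: upgrading a pointwise bound at $s = a$ to a global bound on $I_1$ requires careful case analysis of the nesting (or failure of nesting) of $\Lambda$, $\Lambda_{I_1}$, $\Lambda_{I_2}$ within the flag $\mathcal{F}_1$, and verifying both compatibility and non-membership in $\mathcal{F}_1$ for the sum or intersection that is forced to have small norm. The combinatorial Steps 1--3 are essentially forced by the minimal-subcover framework.
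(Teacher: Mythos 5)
The combinatorial reductions in your first two paragraphs are essentially correct and overlap with the start of the paper's argument: minimality of $\mathcal{I}''$ forces the four intervals to be pairwise non-nested and gives the disjointness of every other interval (this is \eqref{eq: sum le 2}), and your case analysis showing that each of the four intervals straddles exactly one endpoint of the connected component $J$, two on each side, is sound (it uses $J\in\mathcal{I}$, which is where the reduction to $\Lambda\notin\mathcal{F}_1$ is genuinely needed). The problem is the final, "algebraic" step, which is where all the content of your proof would have to live, and as proposed it does not work. A bound $\sup_{s\in I_1}\|a_t\varphi(s)g\Lambda^*\|\le 1$ would not contradict \eqref{eq:proptreeB}, which only asserts $\sup\ge 1$ (the two are compatible, and Claim~\ref{claim: sup bigger then 1} shows this borderline behaviour actually occurs). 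More seriously, the Minkowski submodularity inequality only yields covolume information pointwise on $J_1\cap J_2$; at each such $s$ it tells you that \emph{at least one} of $\Lambda_{J_1}+\Lambda_{J_2}$, $\Lambda_{J_1}\cap\Lambda_{J_2}$ is short, with no control over which one, no control off $J_1\cap J_2$, and no guarantee that the resulting sublattice is compatible with $\mathcal{F}_1$ (the $\Lambda_{J_i}$ are each compatible with $\mathcal{F}_1$ but need not be compatible with one another). There is no mechanism for promoting this to a supremum bound over all of $I_1$, so the obstacle you flag at the end is not a technical wrinkle but a dead end.

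The missing idea is purely combinatorial and is the paper's finishing move. Since $\Lambda\notin\mathcal{F}_1$ is compatible with $\mathcal{F}_1$, the connected component $J$ lies in $\mathcal{I}$ and hence is contained in some maximal interval $[x,y]\in\mathcal{I}'$. This $[x,y]$ meets all four intervals $[e_1,f_1]\prec\dots\prec[e_4,f_4]$, so $x\le f_1$ and $y\ge e_4$, whence the two middle intervals satisfy $[e_2,f_2],[e_3,f_3]\subset[e_1,f_1]\cup[e_4,f_4]\cup[x,y]$. Replacing $[e_2,f_2]$ and $[e_3,f_3]$ by $[x,y]$ in $\mathcal{I}''$ then preserves $\bigcup_{I\in\mathcal{I}''}I=\bigcup_{I\in\mathcal{I}}I$ while strictly decreasing the cardinality, contradicting the minimality of $\mathcal{I}''$. (In your configuration the same swap works, since $[e_2,f_2]\subset[e_1,f_1]\cup[x,y]$ and $[e_3,f_3]\subset[e_4,f_4]\cup[x,y]$.) No algebra of sublattices is needed; the point is to bring in the maximal interval of $\mathcal{I}'$ containing $J$ and use it to exhibit a smaller subcover.
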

  \begin{proof}
    Assume to the contrary that a connected component $I$ of the set $\{s \in I_1: \norm{a_t \varphi(s)g\Lambda}\leq 1\}$ intersects  $4$ different intervals $[e_1,f_1],[e_2,f_2],[e_3,f_3],[e_4,f_4] \in \mathcal{I}''$. 
Since none of these intervals contains another (by minimality of $\mathcal{I}''$), we may assume that $e_1<e_2<e_3<e_4$ and $f_1<f_2<f_3<f_4$ possibly after reordering. 
Since $I \in \mathcal{I}$ intersects the above four intervals non-trivially, so does any interval $[x,y]\in \mathcal{I}'$ containing $I$. 
    Then $x\le f_1$, $y\ge e_4$, and
\begin{align*}
[e_2,f_2],[e_3,f_3] \subset [e_1,f_1]\cup [e_4,f_4] \cup [x,y].
\end{align*}    
Replacing $[e_2,f_2]$ and $[e_3,f_3]$ in $\mathcal{I}''$ with $[x,y]\in \mathcal{I}'$ thus does not change the union over the contained intervals and decreases the size of $\mathcal{I}''$. This contradicts the minimality of the size of $\mathcal{I}''$ and hence the claim follows.
  \end{proof}

The children of the vertex $(I_1, \mathcal{F}_1)$ in the tree are then set to be $(I, \mathcal{F}_1\cup \{\Lambda_I\})$ for $I \in \mathcal{I}''$, where $\Lambda_I$ is any choice of element in $\cL_I$.
Note that any such child satisfies \eqref{eq:proptreeA} (by the definition of $\Lambda_I$) and \eqref{eq:proptreeB} by Claim \ref{claim: sup bigger then 1} using $\mathcal{I}'' \subset \mathcal{I}'$.
This concludes the construction of the tree.
Note that the tree has depth at most $d$ (the exact depth depends on the root only).

We now define the second labeling mentioned earlier.
For any vertex of the tree with label $(I, \mathcal{F})$ and for $(I_1, \mathcal{F}_1)$ its parent we let  $\Lambda_{I, \mathcal{F}}$ be the primitive lattice with $\{\Lambda_{I, \mathcal{F}}\} = \mathcal{F}\setminus \mathcal{F}_1$ (that is, $\Lambda_I$ in the above construction). Furthermore, we let $J(I, \mathcal{F})$ be the connected component of $\{s \in [0,1]: \norm{a_t\varphi(s)g\Lambda_{I, \mathcal{F}}}\leq 1\}$ containing $I$. 
Note that $J(I, \mathcal{F}) \cap I_1 = I$.
Finally, the second label of the given vertex is $J(I,\mathcal{F})$. 

For any two intervals $J,J'$ we write $J \prec J'$ if $\min(J)<\min(J')$ and $\max(J)<\max(J')$.
  The following definition is a summary of the conditions the intervals (given by the second labeling) in the tree obey (see Claim~\ref{claim:weakly ordered} below).
  
  \begin{definition}
    Let $T$ be a finite rooted tree such that every vertex of the tree is labeled by an interval $J(v)$.
Suppose that for every vertex $v$ of the tree its children are ordered as $s_1(v),s_2(v),\dots, s_{d(v)}(v)$. 
We say that $T$ is \emph{weakly ordered} if 
    \begin{enumerate}[(I)]
      \item\label{cond: sorted} for every vertex $v$ we have that $J(s_1(v)) \prec J(s_2(v)) \prec\dots\prec J(s_{d(v)}(v))$. 
      \item\label{cond: contained} For every $i = 2,\dots, d(v) - 1$ we have $J(s_i(v))\subseteq J(v)$
      \item\label{cond: intersects} For every descendant $v'$ of $v$ we have $J(v')\cap J(v)\neq \emptyset$. 
      \item\label{cond: consecutive disjoint} For every $i = 1,\dots, d(v) - 2$ we have that $J(s_i(v)) \cap J(s_{i+2}(v)) = \emptyset$. 
      \item\label{cond: control nbd sons} For every descendant $v'$ of $v$ the interval $J(v')$ intersects at most three intervals out of $J(s_1(v)), J(s_2(v)),\dots,J(s_{d(v)}(v))$.
    \end{enumerate}
  \end{definition}
  \begin{claim}\label{claim:weakly ordered}
The tree $T_{\varphi, t}$ with the second labeling is weakly ordered. 
  \end{claim}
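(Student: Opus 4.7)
The plan is to order the children $s_1(v), \ldots, s_{d(v)}(v)$ of a vertex $v$ labelled $(I(v), \cF(v))$ by the left endpoints of the first-label intervals $I_1, \ldots, I_{d(v)} \in \mathcal{I}''$ arising in the construction of its children. Since $\mathcal{I}'' \subseteq \mathcal{I}'$ consists of inclusion-maximal elements of $\mathcal{I}$, no interval of $\mathcal{I}''$ contains another; hence the orderings by $\min$ and by $\max$ coincide and produce the strict $\prec$-chain $I_1 \prec I_2 \prec \cdots \prec I_{d(v)}$. By the boundary inequalities in \eqref{eq: sum le 2}, at most one element of $\mathcal{I}''$ contains $\min(I(v))$ and at most one contains $\max(I(v))$; in the chosen ordering these must be $I_1$ and $I_{d(v)}$ respectively. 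Consequently, for each interior index $i \in \{2, \ldots, d(v)-1\}$ the interval $I_i$ lies strictly inside $I(v)$.

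The key geometric step I would then carry out is to show that $J(s_i(v)) = I_i$ for interior $i$. Indeed, $I_i$ is a connected component of $\{s \in I(v) : \norm{a_t \varphi(s) g \Lambda_{I_i}} \leq 1\}$ whose endpoints lie strictly inside $I(v)$, so $\norm{a_t \varphi(\cdot) g \Lambda_{I_i}}$ exceeds $1$ just outside $I_i$ but still inside $I(v)$. The global connected component $J(s_i(v))$ thus cannot extend past $I_i$ on either side. For the boundary indices $i \in \{1, d(v)\}$ the same argument restricts $J(s_i(v))$ to possibly extend \emph{only outward}, past the corresponding endpoint of $I(v)$.

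With this structure in place, properties (I)--(IV) become routine. Property (I) is inherited from the $\prec$-chain on the $I_i$'s, since outward boundary extensions preserve the order. Property (II) reads $J(s_i(v)) = I_i \subseteq I(v) \subseteq J(v)$ for interior $i$. Property (III) holds inductively: any descendant $v'$ of $v$ satisfies $I(v') \subseteq I(v) \subseteq J(v)$ and $I(v') \subseteq J(v')$, so $I(v') \subseteq J(v') \cap J(v)$. For (IV), the bound $\sum_{J \in \mathcal{I}''}\mathbbm{1}_J \leq 2$ forces $I_i \cap I_{i+2} = \emptyset$, since any common point would by $\prec$-ordering lie also in $I_{i+1}$ and produce three intervals through a single point; the outward boundary extensions live on opposite sides of $I(v)$ and so cannot create new overlaps between $J(s_i(v))$ and $J(s_{i+2}(v))$.

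Property (V) is where Claim~\ref{claim: at most 3} enters. For a descendant $v'$ of $v$, the lattice $\Lambda_{v'}$ is outside $\cF(v)$ and compatible with $\cF(v)$, and $J(v') \cap I(v) \supseteq I(v') \neq \emptyset$ is a connected subinterval of some connected component $\tilde J$ of $\{s \in I(v) : \norm{a_t \varphi(s) g \Lambda_{v'}} \leq 1\}$. Claim~\ref{claim: at most 3} applied to $\Lambda_{v'}$ bounds the number of $I_i$'s meeting $\tilde J$ by three. The only way $J(v')$ could intersect some $J(s_i(v))$ beyond those coming from $\tilde J \cap I_i$ is via an outward extension of $J(v')$ meeting the outward extension of a boundary child $J(s_1(v))$ or $J(s_{d(v)}(v))$; but such an extension of $J(v')$ forces $\tilde J$ to touch the corresponding endpoint of $I(v)$, which already lies in the boundary interval $I_1$ or $I_{d(v)}$, so that boundary child is already counted. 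The main technical step I expect to be most delicate is precisely this last bookkeeping of outward extensions in (V); once it is checked, the rest is a careful but mechanical application of the structural inequalities proved earlier.
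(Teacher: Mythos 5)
Your proposal is correct and follows essentially the same route as the paper's proof: order the children via the non-containment of the intervals in $\mathcal{I}''$, use \eqref{eq: sum le 2} for Conditions (II) and (IV), the trivial containments $I(v')\subseteq J(v')$ and $I(v')\subseteq I(v)\subseteq J(v)$ for (III), and Claim~\ref{claim: at most 3} for (V). Your explicit observation that $J(s_i(v))=I_i$ for interior children and that boundary children extend only outward is a slightly more detailed form of the paper's remark $J(I,\mathcal{F})\cap I_1=I$, and your bookkeeping of outward extensions in (V) plays the role of the paper's three-interval intersection observation.
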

  \begin{proof}
    Let $(I, \mathcal{F})$ be a vertex of the tree. 
    Let $\mathcal{I}, \mathcal{I}', \mathcal{I}''$ be as in the construction of the children of $(I, \mathcal{F})$. 
    The maximality of $\mathcal{I}'$ ensures that for every two children $v_1, v_2$ of $(I, \mathcal{F})$, $J(v_1)$ is not contained in $J(v_2)$ (and vice versa). 
Hence we can sort the children of $(I, \mathcal{F})$ by $(I_1, \mathcal{F}_1), \dots,(I_{d(I, \cF)}, \mathcal{F}_{d(I, \cF)})$ so that Condition \ref{cond: sorted} holds. 
    Equation \eqref{eq: sum le 2} ensures Conditions \ref{cond: contained} and \ref{cond: consecutive disjoint}. 
    Condition \ref{cond: intersects} follows since for every descendant $(I', \cF')$ of $(I, \mathcal{F})$ we have 
    $I'\subseteq J(I', \cF')$ and $I' \subseteq I\subseteq J(I, \mathcal{F})$. 

    To show Condition \ref{cond: control nbd sons}, let $(I', \cF')$ be a descendant of $(I, \mathcal{F})$.
    By Claim \ref{claim: at most 3} we get that $J(I', \cF')$ intersects at most three intervals out of $I_1, \dots, I_{d(I, \cF)}$. 
    This is not enough as Condition \ref{cond: control nbd sons} concerns intersections with $J(I_i, \mathcal{F}_i)$ and not $I_i$. 
    However, since $I_i = J(I_i, \mathcal{F}_i) \cap I$ and $J(I', \cF') \cap I \supseteq I' \neq \emptyset$, we deduce that $J(I', \cF')$ intersects $I_i$ if and only if it intersects $J(I_i, \mathcal{F}_i)$. 
    (This follows from the following observation: if $I_1, I_2, I_3$ are intervals and $I_3\cap I_1, I_2\cap I_1 \neq \emptyset$ then $I_2 \cap I_3 \neq \emptyset$ if and only if $I_1 \cap I_3 \cap I_3 \neq \emptyset$. We apply this to $I_1 = I$, $I_2 = J(I_i, \mathcal{F}_i)$ and $I_3 = J(I', \cF')$.)
    Condition \ref{cond: control nbd sons} follows.
  \end{proof}

\begin{claim}\label{claim:partitiontree}
For every weakly ordered tree $T$ and every $\ell\geq 0$ there is a partition $\mathcal{Q}_\ell$ of the sets of vertices of depth $\ell$ such that $\#\mathcal{Q}_\ell \le 6^\ell$ and for every $Q\in \mathcal{Q}_\ell$ the intervals $(J(v))_{v\in Q}$ are disjoint.
Moreover, for any two vertices $v_1,v_2$ at depth $\ell$ belonging to the same partition element of $\mathcal{Q}_\ell$ its predecessors at depth $\ell' < \ell$ belong to the same partition element of $\mathcal{Q}_{\ell'}$.
  \end{claim}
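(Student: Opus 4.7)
The plan is to induct on $\ell$. For the base case $\ell = 0$, only the root sits at depth $0$, so $\mathcal{Q}_0 = \{\{\text{root}\}\}$ has size $1 = 6^0$ with vacuously disjoint intervals. For the inductive step, I would refine each $Q \in \mathcal{Q}_{\ell-1}$ separately: fix $Q = \{v_1, \ldots, v_k\}$ ordered so that $J(v_1) \prec \ldots \prec J(v_k)$ are pairwise disjoint (by the inductive hypothesis), and partition the set of children of the $v_p$'s into at most $6$ sub-classes with pairwise disjoint $J$-intervals. Taking the disjoint union over $Q \in \mathcal{Q}_{\ell-1}$ yields $\mathcal{Q}_\ell$ of size at most $6 \cdot 6^{\ell-1} = 6^\ell$, and the nestedness condition holds by construction.

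To produce the $6$-partition of children of a fixed $Q$, I would view the family of child-intervals as an interval graph on $\RR$ and invoke perfectness of interval graphs, reducing the task to bounding the clique number by $6$: every point $x \in \RR$ should lie in at most $6$ child-intervals. This bound decomposes as $2 \times 3$. Conditions (I) and (IV), applied within a single parent $v_p$, give the factor of $2$: the intervals $J(s_i(v_p))$ are $\prec$-ordered and non-consecutive ones are disjoint, so at most two (necessarily consecutive) children of $v_p$ contain $x$. The remaining factor of $3$ bounds the number of distinct parents $v_p \in Q$ contributing such a child; the disjointness of the $J(v_p)$'s inherited from $\mathcal{Q}_{\ell-1}$ is essential for keeping this count as small as $3$.

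The main obstacle will be establishing the bound of $3$ on the number of contributing parents. By (III), any child $s$ of $v_p \in Q$ with $x \in J(s)$ also satisfies $J(s) \cap J(v_p) \neq \emptyset$, so $J(s)$ contains the convex hull of $\{x\}$ and a point of $J(v_p)$. If four parents $v_{p_1}, \ldots, v_{p_4} \in Q$ contributed, the child-interval associated to the ``outermost'' $v_{p_j}$ (the one whose $J(v_{p_j})$ lies farthest from $x$) would by convexity contain all the intervening $J(v_{p_i})$. To derive the contradiction I would pass to the lowest common ancestor $u$ in $T$ of $v_{p_1}, \ldots, v_{p_4}$ and apply condition (V) to $u$: every depth-$\ell$ descendant of $u$ intersects at most $3$ children of $u$. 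Iterating (III) along the ancestry chain and using that $J(v_{p_j}) \subseteq J(s)$ for this outermost $s$, one shows that $J(s)$ intersects the child of $u$ whose subtree contains each $v_{p_j}$, giving at least $4$ distinct children of $u$ intersected (by definition of the LCA), contradicting (V). Rigorously completing this step requires case analysis on whether $x$ lies inside some $J(v_p)$ or in a gap between them, as well as careful handling of the LCA $u$, whose depth depends on the configuration since the $v_p \in Q$ generally do not share a single parent.
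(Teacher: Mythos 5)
Your inductive frame (refine each class $Q\in\mathcal{Q}_{\ell-1}$ separately, so the predecessor condition is automatic) is fine, but the quantitative heart of the step --- ``at most $3$ parents in $Q$ contribute a child-interval containing $x$'' --- is false, and with it the clique bound of $6$ and the existence of the $6$-colouring. The obstruction is visible already when all parents in $Q$ are children of a single vertex. Let the root $r$ have children $c_1\prec c_2\prec c_3\prec c_4$ with pairwise disjoint intervals contained in $J(r)$, take $Q=\{c_1,\dots,c_4\}$ (legitimate under your inductive hypothesis, which records only pairwise disjointness), and let $x$ lie in the gap between $J(c_2)$ and $J(c_3)$. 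Give each $c_i$ two consecutive children whose intervals overlap, contain $x$, and meet $J(c_i)$. Every condition of a weakly ordered tree holds: the only nontrivial check is (V) at $r$, and the child-interval stretching from $J(c_1)$ to $x$ meets only $J(c_1)$ and $J(c_2)$, i.e.\ $2\le 3$ children of $r$; symmetrically on the right. Yet $8$ child-intervals contain $x$, so the interval graph of the children of $Q$ has clique number $8>6$ and admits no proper $6$-colouring. Your own LCA argument, carried out correctly, only forbids a contributing parent at ``$\prec$-distance'' $\ge 3$ on \emph{one side} of $x$; it permits up to $3$ contributing parents on each side (plus one containing $x$), so the parent count is bounded by about $7$, not $3$, and the clique number by about $14$. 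In particular the constant $6^\ell$ of the claim is out of reach by this route, and it is not clear the induction closes at all, because ``pairwise disjoint intervals'' is not a self-improving invariant.

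The paper avoids this by not trying to colour an arbitrary disjoint class: it \emph{constructs} $\mathcal{Q}_{\ell+1}$ explicitly, grouping the children of each class $Q$ by their index modulo $6$, i.e.\ $\bigl\{\,\bigcup_{v\in Q}\{s_i(v):i\equiv b\ (\mathrm{mod}\ 6)\}: b=0,\dots,5\,\bigr\}$. This carries a stronger invariant than disjointness: two same-class vertices branch at their first common ancestor $v'$ through children $s_{i_1}(v'),s_{i_2}(v')$ with $i_2\ge i_1+6$. If their intervals met, $J(v_1)\cup J(v_2)$ would be an interval touching both $J(s_{i_1}(v'))$ and $J(s_{i_2}(v'))$, hence (using (I)--(III)) meeting all $i_2-i_1+1\ge 7$ intermediate children of $v'$, while (V) caps each of $J(v_1),J(v_2)$ at $3$ --- and $3+3<7$. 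Note that in the counterexample above the paper's $\mathcal{Q}_1$ places $c_1,\dots,c_4$ in four \emph{distinct} classes (their indices are distinct mod $6$), which is exactly what defuses the configuration. If you want to salvage a colouring-style argument, you must strengthen the inductive hypothesis to record this mod-$6$ (or an equivalent ``branching separation'') structure, at which point you have essentially reconstructed the paper's proof.
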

  \begin{proof}
    The construction of the partition proceeds as follows.
    We set $\mathcal{Q}_0$ to be the trivial partition. 
    Suppose we have constructed the partition $\mathcal{Q}_\ell$. 
    Then define
    \[\mathcal{Q}_{\ell+1} = \left\{\bigcup_{v\in A}\{s_i(v): i \equiv b\mod{6}\}:Q\in \mathcal{Q}_\ell, b=0,\ldots,5\right\}.\]
    That is, for every partition element $Q\in \mathcal{Q}_\ell$ we group the children of $Q$ into $6$ parts according to their index modulo $6$.
This construction clearly obeys the requirement regarding the predecessors.

Assume by contradiction that $v_1, v_2$ are two vertices at depth $\ell$ with $v_1,v_2 \in Q$ for some $Q\in \mathcal{Q}_\ell$ and with $J(v_1)\cap J(v_2)\neq \emptyset$.
    Let $v'$ be the first common ancestor of $v_1, v_2$.
    Let $v_1' = s_{i_1}(v')$ 
    be the child of $v'$ in the direction of $v_1$
    and $v_2' = s_{i_2}(v')$
    be the child of $v'$ in the direction of $v_2$.
    Assume without loss of generality that $i_1 < i_2$.
   By the above predecessor property, $v_1'$ and $v_2'$ are in the same partition element.
   In particular $i_2 \ge i_1 + 6$ by construction of the partition. 
    Condition \ref{cond: contained} implies now that $v_1, v_2$ cannot be the children of $v'$ (in which case $v_1 = v_1'$ and $v_2 = v_2'$).
    Then $J(v_1)$ intersects $J(v_1')$, $J(v_2)$ intersects $J(v_2')$ and $J(v_1)$ and $J(v_2)$ intersect. This means that $J(v_1)\cup J(v_2)$ is an interval and must intersect $J(s_i(v'))$ for all $i_1 \le i\le i_2$. However, there are at least $7$ of those intervals, but $J(v_1)$ and $J(v_2)$ are only allowed to intersect at most three such intervals each. This shows a contradiction. 
  \end{proof}

We apply Claim~\ref{claim:partitiontree} to the tree $T_{\varphi,t}$ with the second labeling and write $\mathcal{P}_{\varphi,t,\ell}$ for the corresponding partitions.
The partition $\mathcal{P}_t$ in Lemma~\ref{lem: kleinbockmargulis} (a partition of the full tree) is then the partition obtained from adding the partitions of all depths together.

  Given a vertex of the tree $T_{\varphi,t}$ with (first) label $(I, \mathcal{F})$ we write
  \begin{align*}
    \mathrm{Bad}_t(I, \mathcal{F}; \varepsilon) = \{s \in I: \norm{a_t \varphi(s)g\Lambda_{I, \mathcal{F}}} \leq \varepsilon\}.
  \end{align*}
  Furthermore, we write for $P \in \mathcal{P}_t$
  \begin{align}
    \mathrm{Bad}_{t, P}(\varepsilon)
     & := \bigcup_{(I, \mathcal{F})\in P} \mathrm{Bad}_t(I, \mathcal{F}; \varepsilon)\nonumber                                                                                                           \\
     & \subset \bigcup_{(I, \mathcal{F})\in P} \{s \in J(I, \mathcal{F}): \norm{a_t \varphi(s)g\Lambda_{I, \mathcal{F}}} \leq \varepsilon\} =: \mathrm{Bad}_{t, P}'(\varepsilon)\label{eq: two bad sets}
  \end{align}
  where the latter union is still disjoint as established above.
  Lastly, we set 
\begin{align*}
\mathrm{Bad}_t(\varepsilon) = \bigcup_{P \in \mathcal{P}_t}\mathrm{Bad}_{t, P}(\varepsilon).
\end{align*}

  Now let $s\in [0,1]$ with $\norm{a_t \varphi(s)v}\leq \varepsilon$ for some $v \in \Z^d$ primitive.
  We claim that $s \in \mathrm{Bad}_t(\varepsilon)$.
  The proof proceeds by induction on the depth of the tree.
  To begin, observe that $s \in I_1$ for some vertex with first label $(I_1, \mathcal{F}_1)$ of level one in the tree (this is by the covering property of $\mathcal{I}''$ in the construction of the tree). 
We are done if $\Z v \in \mathcal{F}_1$ or if $\norm{a_t\varphi(s)g\Lambda}\leq \varepsilon$ for some $\Lambda \in \mathcal{F}_1$.

Otherwise, we claim that there exists a child of $(I_1, \mathcal{F}_1)$ whose interval contains $s$ (in either labeling). 
If $\Z v$ is compatible with $\mathcal{F}_1$, the claim is direct from the construction of the tree (again using the covering property of $\mathcal{I}''$).
If not, let $\Lambda_1 \in \mathcal{F}_1$ be the largest element (with respect to inclusion) not containing $\Z v$.
Then $\Lambda_1$ and $v$ generate a lattice $\Lambda$ with $\norm{a_t\varphi(s)g\Lambda}\leq \varepsilon$ and the same estimate is true for the primitive lattice $\Q \Lambda \cap \Z^d$.
This lattice is clearly compatible with $\mathcal{F}_1$ and hence the claim follows.

We proceed in this way running through vertices of the tree and increasing the depth at each step. At each vertex in this induction the point $s$ is contained in its interval and all intervals of the predecessors of the vertex (again, in either labeling).
The induction stops when we have reached a full flag (or earlier).
  \qed

\subsection{Proof of Theorem~\ref{thm: nondiv}}

  Write $x = g \Gamma$. Throughout the argument we may assume that $\varepsilon>0$ is sufficiently small.
  We begin first with $t \in \N$ and choose an arbitrary enumeration of $\mathcal{P}_t = \{P_{t,1}, \ldots, P_{t, k}\}$ as constructed in Lemma~\ref{lem: kleinbockmargulis} where $k \leq 1 + 6 +  \ldots + 6^{d-1} = k_d$. We may assume that $k=k_d$ by setting $P_{t, j} =\emptyset$ for $k < j \leq k_d$.
  Moreover, we set
  \begin{align*}
    \mathrm{Bad}_{t, j}(\varepsilon) = \bigcup_{J \in P_{t, j}}\{s\in J: \norm{a_t\varphi(s)\Lambda_J} \leq \varepsilon\}.
  \end{align*}
  By Lemma \ref{lem: kleinbockmargulis}, it is enough to show that for every $j$ and for almost every $s$
  \begin{align}\label{eq: nondiv interclaim}
    \limsup_{T \to \infty} \frac{1}{T}\#\{1\leq t \leq T: s \in \mathrm{Bad}_{t, j}(\varepsilon)\} \ll \varepsilon^\star.
  \end{align}

  Applying Proposition~\ref{prop: excep int} to all exterior products of the standard representation and the variety of pure vectors therein, we obtain (as the standard representation is irreducible) that the measure of the union of all intervals in $P_{t, j}$ of length at least $\euler^{-rt+\sqrt{t}}$ is $\ll \euler^{-\star\sqrt{t}}$.
  Let $\mathrm{Bad}_{t, j}'(\varepsilon) \subset \mathrm{Bad}_{t, j}(\varepsilon)$ be the set of points not contained in that union. The Borel Cantelli lemma implies that \eqref{eq: nondiv interclaim} is equivalent to
  \begin{align}\label{eq: nondiv interclaim2}
    \limsup_{T \to \infty} \frac{1}{T}\#\{1\leq t \leq T: s \in \mathrm{Bad}_{t, j}'(\varepsilon)\} \ll \varepsilon^\star
  \end{align}
  for almost every $s$.

  We use Lemma~\ref{lem: martingale interval lemma} to establish \eqref{eq: nondiv interclaim2} after further partitioning.
  By Lemma~\ref{lem: C, alpha good} for any $J$ the number of connected components of $\{s \in J: \norm{a_t\varphi(s)g\Lambda_{J}}\leq \varepsilon\}$ is at most $M$ for some integer $M$ depending only on $\varphi$.
  We may hence partition $\mathrm{Bad}_{t, j}'(\varepsilon)$ into at most $M$ disjoint subsets, denoted for simplicity by $B_{t}^i$, so that for any $J \in P_{t, j}$
  \begin{align*}
    B_{t}^i \cap J \subset \{s \in J: \norm{a_t \varphi(s)g\Lambda_{J}} \leq \varepsilon\}
  \end{align*}
  is a connected component.
  At this point, one would like to invoke Lemma~\ref{lem: martingale interval lemma}, but observe that the lower bound $|B_{t}^i\cap J| \geq \euler^{-rt-\sqrt{t}}$ for $t$ large enough is not necessarily satisfied.
  To remedy that, we replace $B_{t}^i \cap J$ by the connected component of $\{s \in J: \norm{a_t \varphi(s)g\Lambda_{J}} \leq 2\varepsilon\}$ containing it.
By Lemma~\ref{lem: directionalasymp}, there exists $c>0$ (depending only on $\varphi$) so that the interval of width $c\euler^{-rt}$ around any point in $B_{t}^i \cap J$ is contained in the new connected component.
In particular, the new connected component has size $\gg \euler^{-rt}$, and we continue with these intervals.
  Recall that by Lemma~\ref{lem: C, alpha good} the function $s \mapsto \norm{a_t\varphi(s)\Lambda_J}$ is $(C,\alpha)$-good for any $J$ where $C$ and $\alpha$ depend only on $\varphi$.
  Hence, by Lemma~\ref{lem:dilation} the conditions of Lemma~\ref{lem: martingale interval lemma} are satisfied for $\gamma = \frac{1}{C(2\varepsilon)^\alpha}$.
  This yields that for almost every $s$
  \begin{align*}
    \limsup_{T \to \infty} \frac{1}{T}\#\{1\leq t \leq T: s \in B_{t}^i\} \ll \varepsilon^\alpha.
  \end{align*}
  Summing over $i$ the estimates \eqref{eq: nondiv interclaim}, \eqref{eq: nondiv interclaim2} follow and after summing over $1\leq j\leq k_d$ for almost every $s\in [0,1]$
  \begin{align*}
    \limsup_{T \to \infty} \frac{1}{T} \sum_{t=1}^T 1_{\{y\in X: \height(y)>{\varepsilon^{-1}} \}}(a_t\varphi(s)x) \ll \varepsilon^{\alpha}.
  \end{align*}
  For any $\delta\in [0,1]$ and any point $y \in X$ we have $\height(a_\delta y)\asymp \height(y)$ and hence the continuous time result in the theorem follows.
  \qed

\section{Unipotent invariance}
\label{sec: unipotent inv}
The proof we present here for unipotent invariance is a variant of the proof given at \cite[Prop.~3.1]{ChaikaEskinBirkhoffGeneric}, \cite[Prop.~2.2]{shi2020pointwise}. Instead of the estimate of correlation they use, we use martingales directly.

For every locally compact separable metric space $X$, denote by $\Delta(X)$ the set of Borel probability measures on $X$.
Note that there is a metric $d(\cdot, \cdot)$ inducing the weak${}^\ast$-topology on $\Delta(X)$.

\begin{definition}
  For every probability space $(\Omega, \Sigma, \PP)$ and a measurable space $X$, denote by $RV(X;\Sigma)$ the set of $X$ valued $\Sigma$-measurable random variables. 
\end{definition}

\begin{lemma}\label{lem: martingale wierd lemma 2}
  Let $X$ be a locally compact metric space and let $(\Sigma_n)_{n=0}^\infty$ be a filtration of a probability space $(\Omega, \Sigma, \PP)$ by countably generated $\sigma$-algebras.
  Let $(p_n)_{n=0}^\infty$ be a sequence of random points in $X$, where $p_n\in RV(X; \Sigma_n)$.
  Let 
\begin{align*}
M\in RV(\Hom_{cont}(X, \Delta(X)); \Sigma)
\end{align*}  
be a random continuous function $x\mapsto M_x$. Assume that the conditional distribution $Law(p_{n+1}|\Sigma_n)\in RV(\Delta(X), \Sigma_n)\subseteq RV(\Delta(X), \Sigma)$ satisfy that with probability $1$ we have
  $$d(Law(p_{n+1}|\Sigma_n), M_{p_n})\xrightarrow{n\to \infty}0.$$
  Then with probability $1$ every partial limit $\mu$ of $\mu_n = \frac{1}{n}\sum_{k=1}^n \delta_{p_k}\in RV(\Delta(X);\Sigma_n)$ is invariant under convolution with $M$, that is:
  \[\mu = \int_X M_x \de\mu(x).\]
\end{lemma}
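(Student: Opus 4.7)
The plan is to reduce the conclusion to a scalar statement for test functions: for each $f\in C_c(X)$, define $g_f\colon X\to\RR$ by $g_f(x)=\int_X f\,\de M_x$. Since $x\mapsto M_x$ is continuous from $X$ to $\Delta(X)$ (weak-$*$), $g_f$ is continuous and bounded by $\|f\|_\infty$. Fixing a countable dense family $\{f_j\}\subset C_c(X)$, it suffices to show that almost surely for all $j$ and every partial limit $\mu$ of $(\mu_n)$ we have $\int f_j\,\de\mu=\int g_{f_j}\,\de\mu$. The probability-one event is then the intersection over $j$ of countably many probability-one events.

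Fix one $f=f_j$ and set $X_k:=f(p_k)-\EE[f(p_k)\mid\Sigma_{k-1}]$. Then $S_n=\sum_{k=1}^n X_k$ is a martingale with $|X_k|\le 2\|f\|_\infty$, so Azuma's inequality (Lemma~\ref{lem: azuma ineq}) applied to $(S_n)$ and $(-S_n)$ yields $\tfrac{1}{n}S_n\to 0$ almost surely. Separately, the hypothesis $d(\mathrm{Law}(p_{k+1}\mid\Sigma_k),M_{p_k})\to 0$ a.s.\ combined with $f\in C_c(X)$ and the fact that $d$ metrizes weak-$*$ gives $\EE[f(p_{k+1})\mid\Sigma_k]-g_f(p_k)\to 0$ a.s.; a Cesàro average then yields
\begin{align*}
  \frac{1}{n}\sum_{k=1}^n\EE[f(p_k)\mid\Sigma_{k-1}]-\frac{1}{n}\sum_{k=1}^n g_f(p_{k-1})\to 0 \quad\text{a.s.}
\end{align*}
Adding the two identities produces
\begin{align*}
  \int f\,\de\mu_n-\frac{1}{n}\sum_{k=0}^{n-1}g_f(p_k)\to 0 \quad\text{a.s.},
\end{align*}
so along any subsequence $n_i$ with $\mu_{n_i}\to\mu$ we obtain $\int f\,\de\mu=\lim_i\int g_f\,\de\mu_{n_i}$ (the difference between $\tfrac{1}{n}\sum_{k=0}^{n-1}g_f(p_k)$ and $\int g_f\,\de\mu_n$ is $O(1/n)$).

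The main obstacle is the last step: we need to identify $\lim_i\int g_f\,\de\mu_{n_i}$ with $\int g_f\,\de\mu$. Because $X$ is only locally compact, $g_f$ is bounded continuous but not necessarily compactly supported, while $\mu_n\to\mu$ is weak-$*$ convergence paired with $C_c(X)$, so mass can escape to infinity. The way to handle this is to work in the one-point compactification $\hat X=X\cup\{\infty\}$, viewing each $\mu_n$ and $\mu$ as probability measures on $\hat X$, and to approximate $g_f$ by functions in $C(\hat X)$: since $f$ has compact support $K$, we have $|g_f(x)|\le \|f\|_\infty M_x(K)$, so a uniform bound on $\sup_{x\notin K'}M_x(K)$ for $x$ outside a large compact $K'$, or equivalently tightness of the family $\{M_x\}_{x\in\hat X}$ after a suitable extension of $M$ to $\hat X$, forces $g_f$ to extend continuously by $0$ at $\infty$. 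The formal argument is to pick a cutoff $\chi_R\in C_c(X)$ with $\chi_R\uparrow 1$ and bound $|\int g_f\,\de\mu_{n_i}-\int(\chi_Rg_f)\,\de\mu_{n_i}|$ uniformly in $i$ by a quantity that vanishes as $R\to\infty$, using that outside a large compact set $g_f$ is uniformly small by the weak-$*$ continuity of $x\mapsto M_x$ combined with the escape-of-mass analysis at $\infty$. Once this is done, passing to the limit yields $\int f\,\de\mu=\int g_f\,\de\mu$ as required.
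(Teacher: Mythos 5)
Your overall strategy coincides with the paper's: reduce to a countable dense family of test functions $f\in C_c(X)$, write the Birkhoff sum of $f$ as a bounded-increment martingale (controlled by Azuma's inequality, Lemma~\ref{lem: azuma ineq}) plus a Ces\`aro average of the a.s.\ vanishing errors $\EE[f(p_{k+1})\mid\Sigma_k]-g_f(p_k)$, and conclude that $\int f\,\de\mu_n-\int g_f\,\de\mu_n\to 0$ almost surely. Up to that point your argument is correct and essentially identical to the one in the paper.

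The problem is the step you yourself flag as the main obstacle. Your proposed resolution rests on the claim that $g_f(x)=\int_X f\,\de M_x$ vanishes at infinity, i.e.\ that $M_x(K)$ is uniformly small for $x$ outside a large compact set. This does not follow from the hypotheses: the only assumption on $M$ is continuity of $x\mapsto M_x$ as a map $X\to\Delta(X)$, and for instance the constant map $M_x=\delta_{x_0}$ satisfies every hypothesis while $g_f\equiv f(x_0)$ does not extend by $0$ to the one-point compactification. So the cutoff argument as described would fail. The step can be repaired without extra hypotheses in either of two ways. If ``partial limit'' is read as a limit point in the metric space $(\Delta(X),d)$ --- so that $\mu$ is itself a probability measure --- then vague convergence of probability measures to a probability measure entails convergence against all bounded continuous functions (there is no escape of mass along that subsequence), and $\int g_f\,\de\mu_{n_i}\to\int g_f\,\de\mu$ holds simply because $g_f$ is bounded and continuous. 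If instead one wants to allow sub-probability vague limits, use positivity: for $0\le f\in C_c(X)$ the function $g_f$ is nonnegative and continuous, so $\liminf_i\int g_f\,\de\mu_{n_i}\ge\int g_f\,\de\mu$, which gives $\int f\,\de\mu\ge\int f\,\de\bigl(\int_X M_x\,\de\mu(x)\bigr)$ for all such $f$, hence $\mu\ge\int_X M_x\,\de\mu(x)$ as measures; since each $M_x$ is a probability measure the two sides have equal total mass, forcing equality. The paper's own write-up is silent on this point as well, so noticing the issue was the right instinct --- only the fix you chose does not work.
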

\begin{proof}
  To show that almost surely every partial limit $\mu$ of $\mu_n$ satisfies that $\mu = \int_X M_x \de\mu(x)$, we should prove that for a countable dense set of functions $F\subset C_c(X)$ and for every $f\in F$ we have $\int_X M_x(f) \de\mu(x) = \int_Xf\de\mu$. Here $M_x(f) = \int_X f \de M_x$.
  To show that it is sufficient to prove that for every $f\in F$, almost surely
\begin{align*}
\int_X M_x(f) \de\mu_n(x) - \int_X f \de\mu_n\xrightarrow{n\to \infty} 0.
\end{align*}  
  By assumption,
  \begin{align}\label{eq: M close to law}
    \int_X M_x(f) \de\mu_n(x) - \frac{1}{n} \sum_{k=1}^n Law(p_{k+1}|\Sigma_k)(f) \xrightarrow{n\to \infty} 0.
  \end{align}
  On the other hand, define
  \begin{align*}
    B_n := \sum_{k=1}^{n-1} (Law(p_{k+1}|\Sigma_k)(f) - f(p_{k+1}))  \in RV(\RR, \Sigma_{n}).
  \end{align*}
  The process $B_n$ is a martingale, as $B_n - B_{n-1} = Law(p_{n}|\Sigma_{k-1})(f) - f(p_{n})$ has expectation $0$ given $\Sigma_n$ and is bounded by $c = 2\max |f|$. 
  Since its increments are bounded, Azuma's Inequality in Lemma~\ref{lem: azuma ineq} implies that $\frac{B_n}{n}\xrightarrow{n\to \infty}0$.
  This, together with Eq.~\eqref{eq: M close to law} implies the desired result.
\end{proof}

For every $T, s$ denote by $\mu_{s, T} = \frac{1}{T}(t\mapsto a_t \varphi(s)\Lambda)_* m_{[0, T]}$ the pushforward of the uniform measure on $[0, T]$ where $\Lambda\in X_d = \SL_d(\R)/\SL_d(\Z)$ is fixed.
Also, recall the definition of $Y_s \in \Fg^+$ from Lemma~\ref{lem: directionalasymp}.

\begin{theorem}[Unipotent invariance]\label{thm: unipotent inv}
  For almost every $s$ every partial limit of $\mu_{s, T}$ is invariant under the unipotent flow $x\in \R \mapsto \exp(xY_s)$.
\end{theorem}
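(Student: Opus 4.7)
The plan is to apply Azuma's inequality (Lemma~\ref{lem: azuma ineq}), in the spirit of Lemma~\ref{lem: martingale wierd lemma 2}, to a suitably discretized Birkhoff average, using Lemma~\ref{lem: directionalasymp} to convert small $s$-perturbations into unipotent translates along $Y_s$. Fix $\beta \in (r/2, r)$ and an integer $D \geq 2$; put $\tau = \log(D)/\beta$, $t_n = n\tau$, and $V_n = \tfrac{1}{2}\euler^{(r-\beta)t_n} \to \infty$. On $([0,1], \mathrm{Leb})$ let $\Sigma_n$ be generated by the partition into intervals of length $\euler^{-\beta t_n} = D^{-n}$, let $s_n^\star \in RV([0,1]; \Sigma_n)$ denote the centre of the atom containing $s$, and set $y_n := a_{t_n}\varphi(s_n^\star)\Lambda$. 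For $f \in C_c(X)$, $w \in \R$, and $h \in \{f, f \circ \exp(wY_s)\}$, define $I_n^h(s) := \int_{t_n}^{t_{n+1}} h(a_t \varphi(s)\Lambda)\,dt$ and $J_n^h := \EE[I_n^h \mid \Sigma_n]$. Since $|I_n^h| \leq \tau\|f\|_\infty$, the process $M_N^h := \sum_{n < N}(I_n^h - J_n^h)$ is a martingale with bounded increments, so Azuma's inequality gives $M_N^h/(N\tau) \to 0$ a.s.; hence $\mu_{s, T_N}(h) = \tfrac{1}{N\tau}\sum_{n<N} J_n^h + o(1)$ a.s.~along $T_N := N\tau$.

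The crucial computation evaluates $J_n^h$ via Lemma~\ref{lem: directionalasymp}. Conditionally on $\Sigma_n$, the quantity $v := (s - s_n^\star)\euler^{rt_n}$ is uniform on $[-V_n, V_n]$, and Lemma~\ref{lem: directionalasymp} together with the analyticity of $s\mapsto Y_s$ and the fact that $\Fg^+$ is abelian (so $\exp(vY_{s_n^\star})$ and $\exp(vY_s)$ differ multiplicatively by $\exp(v(Y_{s_n^\star}-Y_s))$ with exponent of size $O(\euler^{(r-2\beta)t_n})$) yield the uniform approximation
\[
  f(a_t \varphi(s)\Lambda) = f(a_{t-t_n}\exp(vY_s)y_n) + o_n(1)
\]
for $t \in [t_n, t_{n+1}]$, using $\beta > r/2$ to dominate the quadratic term $\xi^2\euler^{rt_n}$ in Lemma~\ref{lem: directionalasymp}. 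Consequently
\[
  J_n^f = \frac{1}{2V_n}\int_{-V_n}^{V_n}\int_0^\tau f(a_u \exp(vY_s)y_n)\,du\,dv + o_n(1).
\]
For the shifted function, the conjugation $\exp(wY_s) a_u = a_u \exp(w\euler^{-ru} Y_s)$ and commutativity of $\exp(\R Y_s)$ give, after the change of variable $v \mapsto v - w\euler^{-ru}$, the same expression but with $v$-range $[-V_n + w\euler^{-ru}, V_n + w\euler^{-ru}]$. These two $v$-integration windows differ only on two slivers of total length $\leq 2|w|\euler^{-ru}$, so
\[
  |J_n^f - J_n^{f \circ \exp(wY_s)}| \leq \frac{|w|\tau\|f\|_\infty}{V_n} + o_n(1) = O(\euler^{-(r-\beta)t_n}),
\]
which is geometrically summable. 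Combined with the Azuma estimate this yields $\mu_{s, T_N}(f) - \mu_{s, T_N}(f \circ \exp(wY_s)) \to 0$ almost surely.

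Intersecting the resulting null sets over $w$ in a countable dense $Q \subset \R$ and $f$ in a countable dense $\{f_k\} \subset C_c(X)$, and using density plus the total variation bound $\|\mu_{s,T} - \mu_{s, T_N}\|_{\mathrm{TV}} \leq 2\tau/T$ when $|T - T_N| < \tau$, we conclude that for almost every $s$ every partial limit of $\mu_{s, T}$ is invariant under $\exp(wY_s)$ for all $w \in \R$. The main obstacle is the delicate choice $\beta \in (r/2, r)$: one needs $\beta > r/2$ so that the quadratic Lemma~\ref{lem: directionalasymp} error vanishes (and $\exp(vY_{s_n^\star})y_n$ is close to $\exp(vY_s)y_n$ uniformly in $|v| \leq V_n$), while $\beta < r$ is needed so the unipotent averaging window $V_n$ diverges, which in turn allows the sliver-difference estimate above to upgrade from "bounded unipotent average invariance" to invariance under an arbitrary single unipotent shift.
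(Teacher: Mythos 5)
Your computation of $J_n^h$ via Lemma~\ref{lem: directionalasymp} and the ``sliver'' estimate $|J_n^f - J_n^{f\circ\exp(wY_s)}| = O(\euler^{-(r-\beta)t_n})$ are correct, and the latter is a clean way to upgrade from averaging over a window of length $V_n\to\infty$ to genuine invariance (the paper instead iterates a convolution identity $\mu=\nu_s\ast\mu$). The gap is in the Azuma step. The process $M_N^h=\sum_{n<N}(I_n^h-J_n^h)$ is \emph{not} a martingale for the filtration $(\Sigma_N)$: the increment $I_n^h(s)=\int_{t_n}^{t_{n+1}}h(a_t\varphi(s)\Lambda)\,\de t$ depends on $s$ at resolution $\euler^{-rt_{n+1}}$, which is much finer than the level-$(n+1)$ atoms of length $\euler^{-\beta t_{n+1}}$; since $\beta<r$, the point $a_{t_{n+1}}\varphi(\cdot)\Lambda$ sweeps a distance $\asymp\euler^{(r-\beta)t_{n+1}}\to\infty$ across a single atom, so $I_n^h$ is far from $\Sigma_{n+1}$-measurable (it is only approximately $\Sigma_m$-measurable for $m\gtrsim (r/\beta)n$). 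Consequently $\EE[(I_n^h-J_n^h)(I_m^h-J_m^h)]$ has no reason to vanish for $n<m\lesssim (r/\beta)n$, and the best second-moment bound this structure yields is $\EE\bigl[(M_N^h/N)^2\bigr]=O\bigl(\tfrac{r-\beta}{\beta}\bigr)$, a constant; neither Azuma nor a mixingale law of large numbers applies as stated, so the key assertion $\mu_{s,T_N}(h)=\tfrac{1}{N\tau}\sum_n J_n^h+o(1)$ a.s.\ is not established.

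This is exactly the difficulty the paper's proof is built around: there the sample points $p_n=a_{nT_0}\varphi(s_n)\Lambda$ are evaluated at the \emph{truncated} parameter $s_n$, so they are genuinely $\Sigma_n$-adapted and Lemma~\ref{lem: martingale wierd lemma 2} (the adapted Azuma argument) applies; the price is that $p_n$ is only $O(\lambda)$-close, not $o(1)$-close, to the true orbit point $a_{nT_0}\varphi(s)\Lambda$. Bridging that bounded discrepancy requires the separate transfer argument (the first Claim in the paper's proof), which applies $a_{-t}$ to contract the error and exploits the $a_{T_0}$-invariance of any limit measure. Your proposal is missing both the adaptedness of the increments and this transfer step, and I do not see how to rescue the Azuma application without restructuring the argument along those lines.
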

\begin{proof}
  Fix $0 < \lambda < 1$, and let $x_0, x_1, \ldots\in [0,1]$ be uniform independent random variables. Define the sigma-algebras $\Sigma_n = \sigma(x_0, \ldots, x_n)$ for $n \geq 1$ and $\Sigma = \sigma((x_i)_{i=0}^\infty)$.
  Let $s = (1-\lambda)\sum_{i=0}^\infty \lambda^i x_i\in RV([0,1], \Sigma)$.
  Then $s$ is chosen at random via a continuous distribution with support $[0,1]$, hence it is sufficient to show that every partial limit of $\mu_{s, T}$ is invariant under $x \mapsto \exp(xY_s)$ for almost every $s$ chosen in this way.
  Let $s_n = (1-\lambda)\sum_{i=0}^n \lambda^i x_i\in RV([0,1], \Sigma_n)$, $T_0 = |\log \lambda|/r\in \RR$ and for every $n$ consider the random point
\begin{align*}
p_n = a_{nT_0}\varphi(s_n)\Lambda \in RV(X, \Sigma_n)
\end{align*}  
and the random measures
\begin{align*}
\mu_n &= \frac{1}{n}\sum_{i=0}^{n-1}\delta_{p_i}\in RV(\Delta(X), \Sigma_n)\text{ and }
\mu_n' = \frac{1}{n}\sum_{i=0}^{n-1}\delta_{a_{iT_0}\varphi(s)\Lambda}\in RV(\Delta(X), \Sigma).
\end{align*}
Since $|s-s_i| = O(\lambda^{i+1})$ it follows that $p_i$ and $a_{iT_0}\varphi(s)\Lambda$ are $O(\lambda)$-close (see Lemma~\ref{lem: directionalasymp}).
  Similarly,
  $a_{-t}p_i$ and
  $a_{-t}a_{iT_0}\varphi(s)\Lambda$ are $O(\euler^{-rt}\lambda)$-close when $t < iT_0$.
  Hence $(a_{-t})_*\mu_n$ is $O(\euler^{-rt}\lambda + \frac{1}{\sqrt{n}})$ close to $(a_{-t})_*\mu_n'$ for $t < \sqrt{n}T_0/3$.

  \begin{claim}
    Fix $s\in [0,1]$ and suppose we show that every partial limit of $\mu_{n}$ is $\exp(xY_s)$-invariant for every $x\in \RR$.
    Then every weak-$*$ partial limit of $\mu_{s, T}$ is $\exp(xY_s)$-invariant for every $x\in \RR$.
  \end{claim}
  \begin{proof}
    Let $T_k\xrightarrow{k\to \infty} \infty$ be an infinite sequence of times such that 
    $\mu_{s, T_k}$ converges to a measure $\mu^0$. We need to show that $\mu^0$ is $\exp(xY_s)$-invariant. 
    Since for every $T < T'$ we have $\mu_{s, T} = \mu_{s, T'} + O(\frac{T'-T}{T})$, we may assume that $T_k = T_0n_k$ for some sequence $n_k\to \infty$
    Without loss of generality replace $n_k$ with a subsequence and assume that $\mu_{n_k}$ and $\mu_{n_k}'$ has a weak-$*$ limit as well. 

    Let $\mu' = \lim_{k\to \infty}\mu_{n_k}'$ and $\mu = \lim_{k\to \infty}\mu_{n_k}$.
    By the assumption, $\mu$ is $\exp(xY_s)$-invariant.
    In particular, also $(a_{-t})_*\mu$ is $\exp(xY_s)$-invariant for every $t\ge 0$.
    Moreover, $(a_{-t})_*\mu'$ is $O(\euler^{-rt}\lambda)$ close to $(a_{-t})_*\mu$ for all $t$.
    In addition, it is clear that $\mu'$ is invariant under $(a_{-t})_*$ for $t=T_0$.
    These facts imply that $\mu'$ is invariant under $\exp(xY_s)$ for every $x\in \RR$.
    Since $\mu^0$ is a convolution of $\mu'$ with a uniform distribution of $a_{[0, T_0]}$, for $T = nT_0$, we deduce that $\mu_0$ is $\exp(xY_s)$-invariant.
  \end{proof}
  Hence the proof is reduced to proving the following claim:
  \begin{claim}
    For almost every $s$, every partial limit of $\mu_{n}$ is invariant $x \mapsto \exp(xY_s)$.
  \end{claim}
  To prove the claim we introduce a new operator, and we show that every partial limit of $\mu_{n}$ is invariant under the new operator.
  Let $\nu_s\in RV(\Delta(G), \Sigma)$ denote the following random measure on $G$: choose $y$ uniformly on $[0, \lambda(1-\lambda)]$ and set $\nu_s = Law(a_{T_0} \exp(yY_s))$. Alternatively, if $\nu_s' = Law(\exp(yY_s)) \in RV(\Delta(G^+), \Sigma)$ then $\nu_s = \delta_{a_{T_0}} * \nu_s'$.
  Now, the law $Law(p_{n+1}|\Sigma_n)$ can be computed as follows. We know $s_n$, hence by Lemma~\ref{lem: directionalasymp}
  \begin{align}
    \label{eq: pn+1 law}
    \begin{split}
      p_{n+1}
      & = a_{(n+1)T_0}\varphi(s_{n+1})\Lambda                                   \\
      & = a_{T_0}(I+O(\lambda^{n+1}))\exp((1-\lambda)\lambda x_{n+1}Y_{s_n})p_n \\
      & = (I+O(\lambda^n)) a_{T_0}\exp((1-\lambda)\lambda x_{n+1}Y_{s_n})p_n    \\
      & = (I+O(\lambda^n)) a_{T_0}\exp((1-\lambda)\lambda x_{n+1}Y_{s})p_n.
    \end{split}
  \end{align}
  Here we use that $\lambda$ and $a_{T_0}$ are fixed and hence multiplication by a single $\lambda$ and conjugation with $a_{T_0}$ are absorbed in the $O$-notation.
  Comparing Eq. \eqref{eq: pn+1 law} to 
  \[\nu_s * \delta_{p_n} = Law(a_{T_0}\exp((1-\lambda)\lambda yY_{s})p_n|s, p_n),\]
  we can see that $d(Law(p_{n+1}|\Sigma_n), \nu_s * \delta_{p_n})\xrightarrow{n\to \infty} 0$.
  Now Lemma \ref{lem: martingale wierd lemma 2} shows that every partial limit $\mu$ of $\mu_{n}$ is invariant to convolution with $\nu_s$.
  The theorem is now reduced to the following claim:
  \begin{claim}
    Every measure $\mu$ for which $\nu_s*\mu = \mu$ is invariant under $x \mapsto \exp(xY_s)$.
  \end{claim}
  Inductively, $\mu = \nu_s^{*n}*\mu$. Note that as $\nu_s = \delta_{a_{T_0}} * \nu_s'$ and $\nu_s' = Law(\exp(yY_s))$ for $y\sim Unif([0, \lambda(1-\lambda)])$ we get that $Law(\exp(\kappa yY_s))$ convolution-commutes with $\nu_s'$ for all $\kappa>0$ and
  $$\delta_{a_{T_0}} * Law(\exp(AyY_s)) = Law(\exp(\lambda^{-1}AyY_s))*\delta_{a_{T_0}},$$
  for every $A\in \RR$. Hence
  \begin{align*}
    \mu & = \nu_s^{*n}*\mu = \nu_s^{*(n-1)}*\delta_{a_{T_0}} * Law(\exp(yY_s)) * \mu \\&= \ldots =
    Law(\exp(\lambda^{-n}yY_s)) * \nu_s^{*(n-1)}*\delta_{a_{T_0}} * \mu.
  \end{align*}
  Hence, $\mu$ is the convolution of $Law(\exp(\lambda^{-n}yY_s))$ with some other measure for every $n$.
  This implies that $\mu$ is $\exp(xY_s)$-invariant for all $x\in \RR$.
\end{proof}

\section{Avoiding singular sets and a proof of Theorem~\ref{thm: main}}
\label{sec: avoid sets and proof}

In this section, we combine the linearization technique developed by Dani and Margulis \cite{DaniMargulis93} with the methods from the previous sections to establish that for almost every point on the curve the trajectory under the diagonal flow does not accumulate on a singular set.

\subsection{The linearization technique}

Consider the one-parameter unipotent subgroup $W=\{\exp(xY_0): x \in \R\}< G^+$.
By Theorem~\ref{thm: unipotent inv}, for almost every $s$ any limit of the measures
\begin{align*}
  \nu_{s, T} = \frac{1}{T}\int_0^T \delta_{\rot(s)a_t\varphi(s)x} \de t = \rot(s)_\ast \mu_{s, T}
\end{align*}
is $W$-invariant. Here, $x \in X$ will remain fixed.

Let $\mathcal{H}$ be the collection of all closed connected subgroups of $G= \SL_d(\R)$ such that $H \cap \Gamma < H$ is a lattice and such that the subgroup of $H$ generated by all unipotent one-parameter subgroups acts ergodically on $H\Gamma/\Gamma$.
It is easy to see from the Borel density theorem that $\mathcal{H}$ is countable (in fact, this would also be true if $\Gamma$ were non-arithmetic \cite[Prop.~2.1]{DaniMargulis93}).
We define for $W$ as above and $H \in \mathcal{H}$
\begin{align*}
  N(W, H) & = \{g \in G: W \subset gHg^{-1}\},                   \\
  S(W, H) & = \bigcup_{\mathcal{H}\ni H' \subsetneq H} N(W, H').
\end{align*}
The following is a consequence of Ratner's measure classification \cite{ratner91-measure} (see also \cite[Thm.~2.2]{mozesshah}):

\begin{theorem}\label{thm: ratner}
  Let $\mu$ be a $W$-invariant probability measure on $X = G/\Gamma$ with $\mu(N(W, H)\Gamma) =0$ for all proper subgroups $H \in \mathcal{H}$ of $G$.
  Then $\mu$ is the $G$-invariant probability measure on $X$.
\end{theorem}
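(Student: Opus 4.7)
The plan is to apply Ratner's measure classification to the $W$-ergodic components of $\mu$, and then use the hypothesis together with the countability of $\mathcal{H}$ to rule out all components except the Haar measure on $X$.

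First, I would write the ergodic decomposition $\mu = \int \mu_y\,d\mu(y)$ with respect to $W$. Since $W$ is a unipotent one-parameter subgroup, Ratner's theorem~\cite{ratner91-measure} applies to each ergodic component: for $\mu$-a.e.\ $y$ there exist $H_y \in \mathcal{H}$ and $g_y \in G$ with $g_y \in N(W, H_y)$ such that $\mu_y$ is the unique $H_y$-invariant probability measure on the closed orbit $g_y H_y \Gamma/\Gamma \subset X$. In particular, $\mathrm{supp}(\mu_y) \subset N(W, H_y)\Gamma/\Gamma$.

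Second, for each such $y$ I would choose $H_y$ minimal with respect to inclusion among the subgroups in $\mathcal{H}$ arising as above. Minimality then forces $g_y \notin S(W, H_y)$. For any fixed proper $H \in \mathcal{H}$, the set of $y$ with $H_y = H$ has support contained in $N(W, H)\Gamma/\Gamma$, which by assumption has $\mu$-measure zero. Since $\mathcal{H}$ is countable, summing over all proper $H$ leaves only the option $H_y = G$ on a set of full $\mu$-measure. Whenever $H_y = G$, the corresponding ergodic component $\mu_y$ is the Haar probability measure $m_X$ on $X$; integrating in $y$ yields $\mu = m_X$, as required.

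The main obstacle is the measurability of the selection $y \mapsto H_y$ (together with a measurable choice of representative $g_y$), which is needed to carry the above outline through rigorously. This is handled in the standard way using the countability of $\mathcal{H}$ and the fact that each $N(W, H)$ is a closed (hence Borel) subset of $G$, while $S(W, H)$ is a countable union of such sets. The framework for doing this carefully is established in \cite{DaniMargulis93} and extended in \cite{mozesshah}, from which the theorem may be quoted once the translation between $\mu(N(W,H)\Gamma)=0$ and vanishing of the corresponding ergodic components is made explicit.
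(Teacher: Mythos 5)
Your argument is correct and is exactly the route the paper takes: the paper states Theorem~\ref{thm: ratner} as a direct consequence of Ratner's measure classification \cite{ratner91-measure} via the ergodic-decomposition statement of \cite[Thm.~2.2]{mozesshah}, which is precisely the decomposition-plus-countability argument you spell out. The measurability point you flag can in fact be sidestepped: for each proper $H$ the set $\{y:\mu_y(N(W,H)\Gamma/\Gamma)=1\}$ is $\mu$-null directly from $\int \mu_y(N(W,H)\Gamma/\Gamma)\,\mathrm{d}\mu(y)=\mu(N(W,H)\Gamma/\Gamma)=0$, so no measurable selection of $H_y$ is needed.
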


By Theorems~\ref{thm: nondiv} and \ref{thm: unipotent inv}, the following proposition thus implies Theorem~\ref{thm: main}.

\begin{proposition}\label{prop: linearization}
  For almost every $s\in [0,1]$ and any weak${}^\ast$-limit $\nu$ of the measures $\nu_{s, T}$ (as $T\to \infty$) we have $\nu(N(W,H)\Gamma/\Gamma)= 0$.
\end{proposition}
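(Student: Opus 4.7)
The plan is to apply the linearization technique of Dani and Margulis~\cite{DaniMargulis93}, combined with the tools developed in the previous sections, in close analogy with the proof of Theorem~\ref{thm: nondiv}. For each $H \in \mathcal{H}$ a standard Chevalley-type construction produces a $\QQ$-representation $\rho_H \colon G \to \GL(V_H)$ and a vector $p_H \in V_H(\Z)$ with the following properties: the orbit $\rho_H(\Gamma) p_H$ is discrete in $V_H$; the stabilizer of $p_H$ in $G$ equals the identity component of $N(G,H)$; and $g \in N(W, H)$ if and only if $\rho_H(g) p_H$ lies in the subspace $V_H^W$ of $W$-fixed vectors. Invoking the Dani-Margulis tube lemma, it suffices to show that for every compact $K \subset V_H \setminus V_H^W$ and every $\delta > 0$ there exists $\varepsilon > 0$ such that, almost surely in $s$, every weak${}^\ast$-limit $\nu$ of $\nu_{s, T}$ satisfies
\begin{align*}
  \nu\bigl(\{g\Gamma : \mathrm{dist}(\rho_H(g)\gamma p_H, V_H^W) < \varepsilon\ \text{and}\ \rho_H(g)\gamma p_H \in K\ \text{for some}\ \gamma \in \Gamma\}\bigr) < \delta.
\end{align*}

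To establish this tube bound I would control the density of times $t$ at which $\rot(s) a_t \varphi(s) x$ lies in the tube. Let $\pi \colon V_H \to V_H/V_H^W$ be the natural $W$-equivariant projection (with trivial $W$-action on the target); on $K$, proximity to $V_H^W$ is equivalent to smallness of $\pi$. I apply Proposition~\ref{prop: excep int} to the representation $\pi \circ \rho_H$ and to the discrete set $\mathcal{U} = \rho_H(\Gamma) p_H$. The $G$-invariant vector alternative of Proposition~\ref{prop: excep int} is vacuous here: a nonzero $G$-fixed vector in $V_H/V_H^W$ would, via Theorem~\ref{thm: Yang1} together with the hypothesis that the image of $\varphi$ lies in no weakly constraining pencil or partial flag subvariety, force $H = G$, contradicting its properness. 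Consequently, the union of intervals of length at least $\euler^{-rt+\sqrt{t}}$ on which $\|\pi(\rho_H(a_t \varphi(s) g) \gamma p_H)\| < \varepsilon$ has measure $\ll \euler^{-\star \sqrt{t}}$, and Borel-Cantelli eliminates its contribution almost surely. The remaining short-interval components are handled exactly as in \S\ref{sec: nondivergence}: Lemma~\ref{lem: C, alpha good} provides uniform $(C, \alpha)$-goodness and a bound on the number of connected components; Lemma~\ref{lem:dilation} upgrades each short component to one whose appropriate dilations are disjoint; and the Short Intervals Lemma~\ref{lem: martingale interval lemma} then bounds the density of bad times by $\ll \varepsilon^{\alpha}$ almost surely. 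Letting $\varepsilon \to 0$ yields the desired tube estimate for a single $H$, and summing over the countable family $\mathcal{H}$ gives the proposition.

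The main obstacle is the bookkeeping required to organize the countably many subgroups $H \in \mathcal{H}$ and, for each $H$, the countably many vectors of $\rho_H(\Gamma) p_H$ simultaneously, while maintaining uniform estimates. Discreteness of $\rho_H(\Gamma) p_H$ ensures that on a fixed compact $K$ only finitely many $\gamma$ contribute at any given time, which is what allows Proposition~\ref{prop: excep int} to be applied uniformly to the full set $\mathcal{U}$. The most delicate ingredient is the exclusion of the $G$-invariance alternative for $\pi \circ \rho_H$: this is where Theorem~\ref{thm: Yang1} and the non-degeneracy hypotheses on $\varphi$ enter essentially, and where the distinction between merely avoiding the cusp (Theorem~\ref{thm: nondiv}) and avoiding every singular set becomes visible.
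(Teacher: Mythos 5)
Your overall strategy (linearization, long intervals via Proposition~\ref{prop: excep int} and Borel--Cantelli, short intervals via $(C,\alpha)$-goodness, Lemma~\ref{lem:dilation} and the Short Intervals Lemma~\ref{lem: martingale interval lemma}) is the same as the paper's, but two steps as written would fail. First, the reduction ``it suffices to prove the tube estimate for every compact $K\subset V_H\setminus V_H^W$'' skips the self-intersection problem, which is where the actual structure of the proof of Proposition~\ref{prop: linearization} lies. Near points of $S(W,H)\Gamma$ several $\gamma\in\Gamma$ can place $\eta_H(g\gamma)$ in the tube simultaneously; then the intervals $I_t(s)$ are not well defined (or their dilations are not disjoint) and the Short Intervals Lemma cannot be invoked. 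The paper resolves this with Proposition~\ref{prop: lin-avoiding special sets} (uniqueness of representatives on compact sets avoiding $S_{H,R}\Gamma$) together with an induction over $\mathcal{H}$: one takes $H$ minimal with $\mathrm{Bad}_H$ of positive measure, so that the limit measures already give zero mass to $S(W,H)\Gamma=\bigcup_{H'\subsetneq H}N(W,H')\Gamma$, and only then chooses a compact $K'\subset N(W,H)\setminus S(W,H)$. Your remark that ``only finitely many $\gamma$ contribute'' is not the relevant point: you need exactly one, and you need to know separately that the locus where uniqueness fails is null for the limits. Summing over $\mathcal{H}$ without this ordering does not close the argument.

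Second, $\pi\circ\rho_H$ is not a representation of $G$: the subspace $V_H^W$ is $W$-invariant but not $G$-invariant, so $G$ does not act on $V_H/V_H^W$ and Proposition~\ref{prop: excep int} (which requires a $\Q$-representation $\SL_d\to\SL_N$) cannot be applied to it; for the same reason ``a nonzero $G$-fixed vector in $V_H/V_H^W$'' is not meaningful. The paper instead applies Proposition~\ref{prop: excep int} to $\rho_H$ itself on the variety of pure integral vectors of $V_H=\bigwedge^{\dim H}\Fg$, using the norm bound $\norm{a_t\varphi(s).v_s}\leq C_1(R+\beta)$ supplied by the tube, and the invariant-vector alternative is excluded simply because no nontrivial pure vector of $\bigwedge^{\dim H}\Fg$ is $G$-invariant ($\Fg$ is simple); Theorem~\ref{thm: Yang1} is already consumed inside the proof of Proposition~\ref{prop: excep int} and is not invoked again at this stage. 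A smaller point: $\rho_H(g)p_H\in V_H^W$ says that $W$ normalizes $gHg^{-1}$, not that $W\subset gHg^{-1}$; the correct linear condition is $Y_0\wedge\eta_H(g)=0$, as in the paper.
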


As mentioned we apply the linearization technique to prove Proposition~\ref{prop: linearization}.
For any $H \in \mathcal{H}$ we consider the $G$-space (via the adjoint representation)
\begin{align*}
  V_H = \bigwedge^{\dim(H)}\Fg
\end{align*}
and a unit vector $v_H \in \bigwedge^{\dim(H)}\Fh$. Whenever not implicitly clear, we write $\rho_H: G \to \SL(V_H)$ for the representation and $\eta_H: G \to V_H, \ g \mapsto g.v_H$ for the orbit map at $v_H$.
Observe that $N(W, H) = \eta_H^{-1}(A_H)$ where
\begin{align*}
  A_H = \{v \in V_H: Y_0 \wedge v_H = 0 \}.
\end{align*}
For any $R >0$ we write $B_R = \{v \in V_H: \norm{v} \leq R\}$.
Finally, we define
\begin{align*}
  S_{H, R} = \{g \in G: \exists \gamma \in \Gamma \text{ with } \eta_H(g\gamma) \neq \pm \eta_H(g) \text{ and }  \eta_H(g\gamma), \eta_H(g) \in B_R\cap A_H\}.
\end{align*}

The following proposition is a consequence of the fact that
\begin{align*}
  G/{\Stab_{\Gamma}(v_H)} \to G/\Gamma \times V_H, \ g \mapsto (g\Gamma, \eta_H(g))
\end{align*}
is proper.

\begin{proposition}[{\cite[Prop.~3.2]{mozesshah}}]\label{prop: lin-avoiding special sets}
  Let $R>0$. Then $S_{H, R}$ is a subset of $S(W, H)$ with $S_{H, R}\Gamma \subset X$ closed.
  For any compact subset $K \subset X \setminus S_{H, R}\Gamma$ there exists a neighborhood $\mathcal{O}$ of $B_R \cap A_H \subset V_H$ such that any coset $\eta_H^{-1}(\mathcal{O})\Gamma \cap K$ has a unique representative in $\eta_H^{-1}(\mathcal{O})$.
\end{proposition}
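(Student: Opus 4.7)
The plan is to reduce the proposition to two inputs essentially drawn from \cite{DaniMargulis93, mozesshah}: first, that the orbit $\Gamma v_H \subset V_H$ is discrete---equivalently, the orbit map $G/\mathrm{Stab}_\Gamma(v_H) \to G/\Gamma \times V_H$, $g \mapsto (g\Gamma, \eta_H(g))$, is proper---and second, that for $H \in \mathcal{H}$ and any $\gamma \in \Gamma$ with $\gamma H \gamma^{-1} \neq H$, the identity component of $H \cap \gamma H \gamma^{-1}$ again belongs to $\mathcal{H}$. With these in hand everything reduces to a standard properness plus compactness argument.

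For the inclusion $S_{H,R} \subset S(W,H)$ I would simply unpack the definitions. If $g, g\gamma \in \eta_H^{-1}(A_H)$ then both $\mathrm{Ad}(g)\mathfrak{h}$ and $\mathrm{Ad}(g\gamma)\mathfrak{h}$ contain $Y_0$, hence $W \subset g(H \cap \gamma H \gamma^{-1}) g^{-1}$. The additional assumption $\eta_H(g\gamma) \neq \pm \eta_H(g)$ translates to $\gamma v_H \neq \pm v_H$, which together with the aforementioned unimodularity of $H \in \mathcal{H}$ forces $\gamma \notin N_G(H)$; the second input then produces a proper $H' := (H \cap \gamma H \gamma^{-1})^\circ \in \mathcal{H}$, so that $g \in N(W, H') \subset S(W, H)$.

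For closedness of $S_{H,R}\Gamma$ in $X$, take a sequence $g_n \in S_{H,R}$ with $g_n\Gamma \to g\Gamma$ and lift so that $g_n \to g$ in $G$. Fix witnesses $\gamma_n \in \Gamma$. The pair $(g_n\gamma_n\Gamma, \eta_H(g_n\gamma_n)) = (g_n\Gamma, \rho_H(g_n)\gamma_n v_H)$ stays in a compact subset of $X \times V_H$, so properness of the orbit map constrains $g_n\gamma_n \mathrm{Stab}_\Gamma(v_H)$ to a compact subset of $G/\mathrm{Stab}_\Gamma(v_H)$. Combined with discreteness of $\Gamma v_H$, this means that $\gamma_n v_H$ is eventually equal to a single vector $w$ along a subsequence, and necessarily $w \neq \pm v_H$ since the inequality persists from each finite stage. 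Picking any $\gamma \in \Gamma$ with $\gamma v_H = w$, the limit then satisfies $\eta_H(g), \eta_H(g\gamma) \in B_R \cap A_H$ and $\eta_H(g\gamma) \neq \pm \eta_H(g)$, so $g \in S_{H,R}$.

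The separation statement is the same mechanism combined with compactness of $K$. Arguing by contradiction, I would extract $g_n$ with $g_n\Gamma \in K$ and $\gamma_n^{(1)}, \gamma_n^{(2)} \in \Gamma$ such that both $\eta_H(g_n \gamma_n^{(j)})$ converge into $B_R \cap A_H$ while $\eta_H(g_n \gamma_n^{(1)}) \neq \pm \eta_H(g_n \gamma_n^{(2)})$. Replacing $g_n$ by $g_n \gamma_n^{(1)}$ and using compactness of $K$ to pick a lift convergent to some $h \in G$, the properness and discreteness applied to $(\gamma_n^{(1)})^{-1}\gamma_n^{(2)} v_H$ pins this sequence at a fixed $w \neq \pm v_H$, forcing $h \in S_{H,R}$ and contradicting $h\Gamma \in K \subset X \setminus S_{H,R}\Gamma$. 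The main obstacle, in my view, is the clean verification of the discreteness of $\Gamma v_H$: it is not a formality and rests on the lattice property of $H \cap \Gamma < H$, together with Borel density used to identify $\mathrm{Stab}_G(v_H)$ with (essentially) $N_G(H)$---precisely what is packaged into the definition of $\mathcal{H}$.
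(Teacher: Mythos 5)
Your argument is correct and follows the same route as the source: the paper does not actually prove this proposition but quotes it from Mozes--Shah \cite[Prop.~3.2]{mozesshah}, remarking only that it rests on the properness of the orbit map $g \mapsto (g\Gamma,\eta_H(g))$, and your write-up is precisely the standard reconstruction of that argument from properness (equivalently, discreteness of $\Gamma v_H$) together with the Dani--Margulis intersection lemma. The only point to phrase more carefully is the first step: one should take $H'$ to be the smallest element of $\mathcal{H}$ containing $g^{-1}Wg$ (which is contained in $H\cap\gamma H\gamma^{-1}\subsetneq H$), rather than asserting that $(H\cap\gamma H\gamma^{-1})^{\circ}$ itself belongs to $\mathcal{H}$.
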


\begin{proposition}\label{prop: linearization-time in tubes}
  Let $r >0$, $H \in \mathcal{H}$, and let $\varepsilon>0$. Then there exists a closed subset $\mathcal{S} \subset S(W, H)$ so that for any compact subset $K \subset X \setminus \mathcal{S}\Gamma$ there exists an open neighborhood $\mathcal{O}'$ of $B_r\cap A_H$ with the following property.
  For almost every $s \in [0,1]$ and $T$ sufficiently large
  \begin{align*}
    \nu_{s, T}(K \cap \pi(\eta_H^{-1}(\mathcal{O}'))) \leq \varepsilon.
  \end{align*}
\end{proposition}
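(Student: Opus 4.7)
The plan is to adapt the proof of Theorem~\ref{thm: nondiv} to this linearization setting, using Proposition~\ref{prop: lin-avoiding special sets} to replace the $d$-dimensional standard representation with the representation $V_H$ and integer vectors with the discrete set $\Gamma.v_H$. Concretely, I would fix $R > r$ to be chosen below and set $\mathcal{S} = S_{H,R}$; by Proposition~\ref{prop: lin-avoiding special sets} this is closed and contained in $S(W,H)$. For compact $K \subset X \setminus \mathcal{S}\Gamma$, the same proposition produces a neighborhood $\mathcal{O}$ of $B_R \cap A_H$ on which lifts to $\eta_H^{-1}(\mathcal{O})$ are unique, and choosing an open neighborhood $\mathcal{O}'$ of $B_r \cap A_H$ with $\overline{\mathcal{O}'} \subset \mathcal{O}$, any occurrence of $\rot(s)a_t\varphi(s)x \in K \cap \pi(\eta_H^{-1}(\mathcal{O}'))$ yields a unique $w = w(s,t) \in \Gamma.v_H$ with $\rho_H(\rot(s)a_t\varphi(s)g)w \in \overline{\mathcal{O}'} \subset B_R$, where $x = g\Gamma$.

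Next I would apply Proposition~\ref{prop: excep int} to $\rho_H$ and the discrete set $\mathcal{U} = \Gamma.v_H$ (discrete since $H\cap\Gamma$ is a lattice in the stabilizer of $v_H$). For proper $H \in \mathcal{H}$ the invariant-vector alternative~(2) cannot occur: if some $\gamma.v_H$ were $\rho_H(G)$-invariant then $\gamma H \gamma^{-1}$ would be normal in $\SL_d(\R)$, which has no proper positive-dimensional normal subgroup. So alternative~(1) of Proposition~\ref{prop: excep int} applies with $\psi(t) = \sqrt{t}$: the measure of $s\in [0,1]$ lying in an interval of length $\geq \euler^{-rt+\sqrt{t}}$ on which $\norm{\rho_H(\rot(s)a_t\varphi(s)g)w} \leq R$ for some $w \in \mathcal{U}$ decays like $\euler^{-\delta\sqrt{t}}$. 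By Borel--Cantelli, for a.e.~$s$ only boundedly many $t \in \NN$ place $s$ in such a long interval, so long intervals contribute asymptotic density zero to $\nu_{s,T}(K \cap \pi(\eta_H^{-1}(\mathcal{O}')))$.

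For the short intervals (length $\leq \euler^{-rt+\sqrt{t}}$) I would proceed as at the end of the proof of Theorem~\ref{thm: nondiv}. Lemma~\ref{lem: C, alpha good} provides $(C,\alpha)$-goodness and a uniform bound $M$ on the number of connected components of $\{s : \norm{\rho_H(\rot(s)a_t\varphi(s)g)w} \leq R\}$ for each $w$; Lemma~\ref{lem:dilation} then shows that each short-interval component admits a centered dilation by a factor $\gamma \asymp (r/R)^{-\alpha}$ still contained in the ambient component with threshold $2R$. Organizing the pairs $(w,J)$ at each time $t$ into boundedly many subfamilies with pairwise disjoint $\gamma$-dilations --- via a tree construction analogous to Lemma~\ref{lem: kleinbockmargulis} but for the orbit $\Gamma.v_H$ in $V_H$ --- Lemma~\ref{lem: martingale interval lemma} bounds the a.s.~asymptotic density of short-interval times by $O(1/\gamma) = O((r/R)^\alpha)$. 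Choosing $R$ so large that this bound is at most $\varepsilon$ completes the argument. The main obstacle I anticipate is precisely constructing this partition into disjoint subfamilies for the orbit $\Gamma.v_H$: Lemma~\ref{lem: kleinbockmargulis} was tailored to $\Z^d$ and its proof relied on the full flag structure specific to $\SL_d$, so for the general representation $V_H$ one needs an abstract analog adapted to the discrete orbit --- leveraging discreteness of $\Gamma.v_H$ and a finite-depth tree of proto-lifts governed by the $(C,\alpha)$-goodness of the relevant norm functions.
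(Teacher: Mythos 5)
Your overall architecture matches the paper's: take $\mathcal{S}=S_{H,R}$ with $R$ a large multiple of $r$, rule out the invariant-vector alternative of Proposition~\ref{prop: excep int} by simplicity (the paper phrases this as: no nontrivial pure vector in $V_H$ is $G$-invariant), dispose of long intervals by Borel--Cantelli, and feed the short intervals into Lemma~\ref{lem: martingale interval lemma} with a dilation factor governed by the ratio of inner to outer thresholds. However, the step you yourself single out as the main obstacle is a genuine gap, and the repair is not the one you propose. No analogue of the Kleinbock--Margulis tree for the orbit $\Gamma.v_H$ exists or is needed: the disjointness you are missing is exactly what Proposition~\ref{prop: lin-avoiding special sets} supplies. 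Because $K$ avoids $S_{H,R}\Gamma$, for each $t$ and each $s$ with $\rot(s)a_t\varphi(s)x\in K\cap\pi(\eta_H^{-1}(\mathcal{O}))$ the vector $v_s\in\eta_H(x)$ with $\rot(s)a_t\varphi(s).v_s\in\mathcal{O}$ is unique up to sign, so the maximal intervals $I_t(s)$ on which this fixed vector stays in $\mathcal{O}$ are pairwise equal or disjoint. Each such interval contributes at most $M^2$ connected components of the $\mathcal{O}'$-sublevel set (with $M$ as in Lemma~\ref{lem: C, alpha good}), whose dilations remain inside $I_t(s)$ by Lemma~\ref{lem:dilation}; sorting by the index of the component yields the boundedly many families with disjoint dilations that Lemma~\ref{lem: martingale interval lemma} requires. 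You already invoke this uniqueness to define the lift $w(s,t)$ but never deploy it where it is actually needed.

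A second, related omission: you track only the norm $\|\rho_H(\rot(s)a_t\varphi(s)g)w\|$, whereas membership in a neighborhood of $B_r\cap A_H$ also constrains $\|Y_0\wedge\rho_H(\cdots)w\|$. The paper takes $\mathcal{O}=\{v:\|v\|<R+\beta,\ \|Y_0\wedge v\|<\beta\}$, sets $\mathcal{O}'=\delta\mathcal{O}$, and runs the $(C,\alpha)$-good and dilation arguments for the single function $f_s=\max\{(R+\beta)^{-1}\|\cdot\|,\ \beta^{-1}\|Y_0\wedge\cdot\|\}$. This matters twice. First, Lemma~\ref{lem:dilation} requires the ambient interval to be maximal for the same function whose small sublevel set is being dilated, and your ambient components of $\{\|\cdot\|\le 2R\}$ are not the intervals $I_t(s)$ cut out by $\mathcal{O}$. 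Second, shrinking only the norm threshold from $R$ to $r$ while keeping the $A_H$-thickness fixed need not produce a gain of order $(r/R)^\alpha$: the interval $I_t(s)$ may terminate because the $A_H$-proximity condition fails while the norm stays small throughout, in which case the $(C,\alpha)$-good estimate for the norm alone gives nothing. The homothetic shrinking $\mathcal{O}'=\delta\mathcal{O}$ in both coordinates, with the gain $\delta^\alpha$ coming from $f_s$, is what makes the count close.
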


\begin{proof}
  In the following, we first take times $t \in \N$.
  Let $\delta\in (0,1)$ (to be chosen polynomial in $\varepsilon$ later) and $R = r/\delta$. Set $\mathcal{S} = S_{H, R}$, let $K \subset X \setminus \mathcal{S}\Gamma$ be compact, and choose $\mathcal{O}$ as in Proposition~\ref{prop: lin-avoiding special sets}. Let $\beta>0$ so that $\mathcal{O}$ contains
  \begin{align*}
    \{v \in V_H: \norm{v}< R+\beta, \norm{Y_0 \wedge v}< \beta\}.
  \end{align*}
  By replacing $\mathcal{O}$ we may assume that it is equal to the above displayed set.
  We will show that the open neighborhood
  \begin{align*}
    \mathcal{O}'
    = \{v \in V_H: \norm{v}< \delta(R+\beta), \norm{Y_0 \wedge v}< \delta\beta\}= \delta \mathcal{O}.
  \end{align*}
  of $B_r \cap A_H$ satisfies the requirements of the proposition.

  Consider the set of points
  \begin{align*}
    J_t= \{s \in [0,1]: \rot(s)a_t \varphi(s)x \in K \cap \pi(\eta_H^{-1}(\mathcal{O}))\}
  \end{align*}
  Note that for any $s \in J_t$ there exists a unique vector $v_s \in \eta_H(x)$ (up to signs) such that $\rot(s)a_t \varphi(s).v_s \in \mathcal{O}$.
  We let $I_t(s)$ be the largest interval containing $s$ such that $\rot(s')a_t \varphi(s').v_s \in \mathcal{O}$ for all $s' \in I_t(s)$.
  By construction, these intervals are either equal or disjoint.

  We will say that $s$ is special at time $t\in \N$ if $s \in J_t$ and $I_t(s)$ has length at least $\euler^{-rt+ \sqrt{t}}$ and is special if it is special for infinitely many times $t\in \N$. We claim that the set of special points has zero measure.
  As $s \mapsto \rot(s)$ is bounded, there exists a constant $C_1>0$ so that for any $t>0$
  \begin{align*}
    I_t(s) & \subset \{s \in [0,1]: \norm{\rot(s)a_t\varphi(s).v_s}\leq R+\beta\} \\
           & \subset \{s \in [0,1]: \norm{a_t\varphi(s).v_s}\leq C_1(R+\beta)\}.
  \end{align*}
  Also, note that no nontrivial pure vector in $V_H$ is $G$-invariant as $\Fg$ is simple.
  Hence, Proposition~\ref{prop: excep int} applied to the variety of pure vectors in $V_H$ implies that the measure of the set of special points at time $t$ is $\ll \euler^{-\star\sqrt{t}}$ where the implicit constant depends on $R$ (and hence on $\delta$) and $x$. The claim thus follows from the Borel-Cantelli lemma.

  Let $I$ be one of the non-special intervals $I_t(s)$.
  Lemma~\ref{lem: C, alpha good} shows that the functions $s' \mapsto \norm{a_t \varphi(s').v_s}$ and $s' \mapsto \norm{Y_0 \wedge a_t \varphi(s').v_s}$ are $(C, \alpha)$-good where $C$ and $\alpha$ only depend on $\dim(H)$ and $\varphi$.
  In particular,
  \begin{align*}
    f_s: s' \mapsto \max\{(R+\beta)^{-1}\norm{a_t \varphi(s').v_s}, \beta^{-1}\norm{Y_0 \wedge a_t \varphi(s').v_s}\}
  \end{align*}
  is $(C, \alpha)$-good.
  Moreover, any set of the form $\{s':|f_s(s')|< \kappa\}$ has at most $M^2$ connected components where $M$ is as in Lemma~\ref{lem: C, alpha good} for the representation $\rho_H$.
  We define for each $k \leq M^2$ a collection of intervals $\mathcal{I}_{t, k}$ by picking for each interval $I_t(s)$ the $k$-th connected component of $\{s' \in I_t(s):|f_s(s')|< \delta\}$ (if it exists).
  By Lemma~\ref{lem:dilation} we have that for any $I' \in \mathcal{I}_{t, k}$ the dilation $\frac{1}{C\delta^\alpha}.I'$ is contained in one of the intervals $I_t(s)$.
Note that $\mathcal{I}_{t, k}$ satisfies the assumptions of Lemma~\ref{lem: martingale interval lemma} using additionally that the intervals $I_t(s)$ are either equal or disjoint.
  We let $B_{t, k}$ be the union of all intervals in $\mathcal{I}_{t, k}$. 
  By Lemma~\ref{lem: martingale interval lemma}, we have for almost every $s \in [0,1]$
  \begin{align*}
    \limsup_{T \to \infty} \frac{1}{T} \#\{1\leq t \leq T: s \in B_{t, k}\} \ll \delta^\alpha.
  \end{align*}
  Summing over $k$, for almost every $s \in [0,1]$
  \begin{align*}
    \limsup_{T \to \infty} \frac{1}{T} \#\{1 \leq t \leq T: \rot(s)a_t \varphi(s).x \in K \cap \pi(\eta_H^{-1}(\mathcal{O}'))\}\ll \delta^\alpha,
  \end{align*}
  which concludes the discrete-time case of the lemma for a suitable choice of $\delta = \delta(\varepsilon)$.
  To obtain the lemma for continuous time, it suffices to observe that by Lipschitz continuity there exists a constant $C_2\geq 1$ (depending on $\rot(\cdot)$) so that $\rot(s)a_{\lfloor t \rfloor}\varphi(s) \in \mathcal{O}'$ implies $\rot(s)a_{t}\varphi(s) \in C_2\mathcal{O}'$.
\end{proof}

\begin{proof}[Proof of Proposition~\ref{prop: linearization}]
  For any $H \in \mathcal{H}$ write $\mathrm{Bad}_H \subset [0,1]$ for the set of points $s \in [0,1]$ for which the sequence of measures $\nu_{s, T}$ has a weak${}^\ast$-limit $\nu$ satisfying $\nu(N(W, H)\Gamma/\Gamma) > 0$.
  For any compact subset $K' \subset N(W, H)$ and any $\kappa > 0$ the sets $\mathrm{Bad}_H(K', \kappa) \subset \mathrm{Bad}_H$ are similarly defined with the inequality $\nu(K'\Gamma/\Gamma) > \kappa$.
  These are Borel measurable subsets.
  Note that $\mathrm{Bad}_H = \bigcup_{K',\kappa}\mathrm{Bad}_H(K', \kappa)$.
  Let $H \in \mathcal{H}$ be minimal (with respect to inclusion) so that $\mathrm{Bad}_H$ has a positive measure.
  In particular, $\mathrm{Bad}_{H'}$ is a nullset for any $\mathcal{H}\ni H'\subsetneq H$ and for almost every $s \in [0,1]$ any limit $\nu$ of the measure $\nu_{s, T}$ satisfies $\nu(S(W, H)\Gamma) = 0$.
  We suppose by contradiction that $H \neq G$.
  Choose $K' \subset N(W, H)\setminus S(W, H)$ compact and $\kappa>0$ such that $\mathrm{Bad}_H(K', \kappa)$ has positive measure.
  Let $r >0$ be such that $\eta_H(K') \subset B_r$.
  Let $\mathcal{S}$ be as in Proposition~\ref{prop: linearization-time in tubes} for $\varepsilon= \frac{\kappa}{2}$ and choose a compact neighborhood $K \subset X \setminus \mathcal{S} \Gamma$ of $K' \Gamma$.
  By Proposition~\ref{prop: linearization-time in tubes} there exists an open neighborhood $\mathcal{O}$ of $\eta_H(K')$ so that for almost every $s$ and $T$ sufficiently large
  \begin{align*}
    \nu_{s, T}(K \cap \eta_H^{-1}(\mathcal{O})\Gamma) \leq \frac{\kappa}{2}.
  \end{align*}
  For $T \to \infty$ this implies that any weak${}^\star$-limit $\nu$ of $\nu_{s, T}$ satisfies $\nu(K'\Gamma) \leq \frac{\kappa}{2}$.
  This is a contradiction to $\mathrm{Bad}(K', \kappa)$ having a positive measure and the proposition follows.
\end{proof}

\section{Proof of the applications}\label{sec: applications}

\subsection{Proof of Theorem~\ref{thm: dirichletimprovable}}
Let $X = \SL_{d+1}(\R)/\SL_{d+1}(\Z)$.
Write $\norm{\cdot}_\infty$ for the supremum norm on $\R^d$ and set for $\mu \in (0,1]$
\begin{align*}
  X(\mu) = \{\Lambda\in X: \exists\lambda \in \Lambda\setminus\{0\} \text{ with }\norm{\lambda}_{\infty} \leq \mu \}
\end{align*}
Note that $X(1) = X$ and that $X(\mu)$ for $\mu<1$ is a subset of (Haar) measure in $(0,1)$.
We write $f(\mu)$ for that measure.

Define $a_t = \diag(\euler^{dt}, \euler^{-t}, \ldots, \euler^{-t})$ for $t \in \R$ and for $\xi \in \R^d$
\begin{align*}
  u(\xi) = \begin{pmatrix}
             1 & \xi_1 & \cdots & \xi_d \\
               & 1     &        &       \\
               &       & \ddots &       \\
               &       &        & 1
           \end{pmatrix}\in \SL_{d+1}(\R).
\end{align*}
By the Dani correspondence, \eqref{eq: Dirichletimprovable} for $\xi$ and $N$ has a non-zero solution if and only if $a_{\log(N)}u(\xi)\Z^{d+1} \in X(\mu)$.
As $X(\mu)$ is Jordan-measurable (with respect to the Haar measure), Theorem~\ref{thm: main} implies that for almost every $s$ and for every $\mu>0$
\begin{align}\label{eq: logscaleint}
  \begin{split}
    \frac{1}{\log(N)} &\int_1^N \frac{1_{X(\mu)}(a_{\log(n)} u(\phi(s))\Z^{d+1})}{n} \de n \\
    \qquad&= \frac{1}{\log(N)} \sum_{n=1}^N\frac{1_{X(\mu)}(a_{\log(n)} u(\phi(s))\Z^{d+1})}{n} + o(1)
  \end{split}
\end{align}
converges to $f(\mu)$. This together with Dani correspondence implies Theorem~\ref{thm: dirichletimprovable}.

\subsection{Proof of Theorem~\ref{thm: algapprox}}
Let $s\in \RR$.
Define
\[C_s = \left\{(c_i)_{i=1}^d\in \RR^d: |c_i|\le 1 \text{ for }i=1, \dots, d, \left| \sum_{i=1}^d c_is^i\right| \le 1 \right\}\subset \RR^{d}, \]
\[M_{s, \mu} = \left\{\left(\eta,(c_i)_{i=1}^d\right) \in \RR \times \RR^d: (c_i)_{i=1}^d \in C_s,~\mu \left|\sum_{i=1}^d ic_is^{i-1} \right| \ge |\eta|\right\}, \]
\[f(s, \mu) = m_{X_{d+1}}\left( \left\{ \Lambda\in X: \Lambda\cap M_{s,\mu} \neq \{0\} \right\} \right).\]
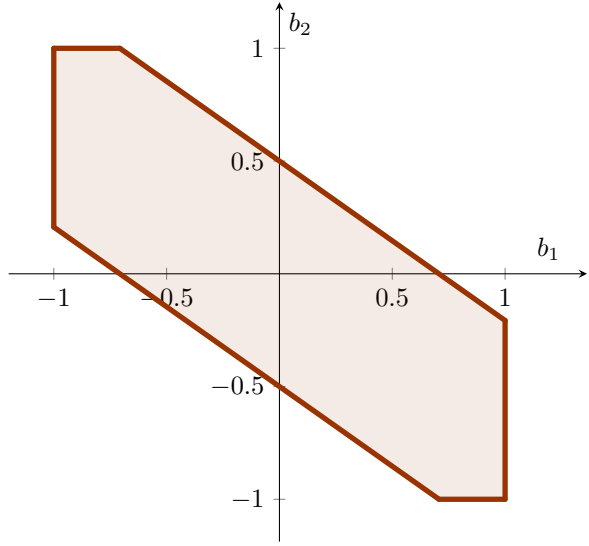
\begin{figure}
  \definecolor{zzttqq}{rgb}{0.6,0.2,0.}
  \begin{tikzpicture}[line cap=round, line join=round,>=triangle 45, x=3.0cm, y=3.0cm]
    \begin{axis}[
        x=3.0cm, y=3.0cm,
        axis lines=middle,
        xmin=-1.1984562985247267,
        xmax=1.3629061714260087,
        ymin=-1.1862653951832431,
        ymax=1.204339576770771,
        xtick={-1.0,-0.5,...,1.0},
        ytick={-1.0,-0.5,...,1.0},]

      \clip(-1.1984562985247267,-1.1862653951832431) rectangle (1.3629061714260087,1.204339576770771);
      \fill[line width=2.pt, color=zzttqq, fill=zzttqq, fill opacity=0.10000000149011612] (-1.,1.) -- (-0.7071067811865478,1.) -- (1.,-0.20710678118654752) -- (1.,-1.) -- (0.7071067811865478,-1.) -- (-1.,0.20710678118654752) -- cycle;
      \draw [line width=2.pt, color=zzttqq] (-1.,1.)-- (-0.7071067811865478,1.);
      \draw [line width=2.pt, color=zzttqq] (-0.7071067811865478,1.)-- (1.,-0.20710678118654752);
      \draw [line width=2.pt, color=zzttqq] (1.,-0.20710678118654752)-- (1.,-1.);
      \draw [line width=2.pt, color=zzttqq] (1.,-1.)-- (0.7071067811865478,-1.);
      \draw [line width=2.pt, color=zzttqq] (0.7071067811865478,-1.)-- (-1.,0.20710678118654752);
      \draw [line width=2.pt, color=zzttqq] (-1.,0.20710678118654752)-- (-1.,1.);
      \draw (0,1.2) node[anchor=north west] {${b_2}$};
      \draw (1.1,0.2) node[anchor=north west] {${b_1}$};
    \end{axis}
  \end{tikzpicture}
  \caption{The set $C_s$ for $d=2, s=\sqrt 2$.}
  \label{fig: Cs}
\end{figure}
\begin{figure}
  \includegraphics[scale=0.5]{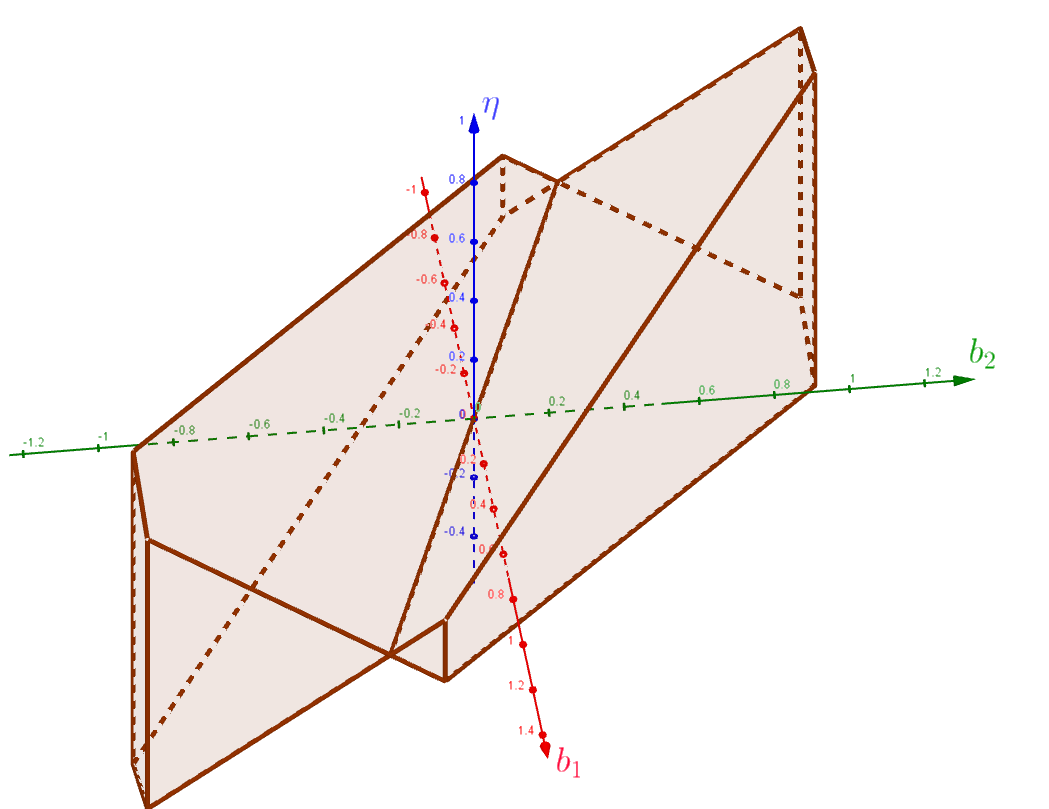}
  \caption{The set $M_{s, \mu}$ for $d=2, s=\sqrt 2, \mu=1/4$.}
  \label{fig: Msmu}
\end{figure}
We will prove Theorem \ref{thm: algapprox} with this function $f$.
The sets $C_s$ and $M_{s, \mu}$ are illustrated respectively in Figures \ref{fig: Cs} and \ref{fig: Msmu}.

Let $a_t$ be as in \eqref{eq: a_t} for $n=d, m=1$, and set for any $s\in \R$ $$\varphi(s) = \begin{pmatrix}
    1 & s & \cdots & s^d \\
      & 1 &        &     \\
      &   & \ddots &     \\
      &   &        & 1
  \end{pmatrix}.$$
Let $\Lambda_{s, t}:= a_t\varphi(s) \ZZ^{d+1}$.
For every $\varepsilon>0$ denote by $M_{s, \mu}^\varepsilon$ the $\varepsilon$-neighborhoods of $M_{s, \mu}$, and $M_{s, \mu}^{-\varepsilon}$ the set of $v\in M_{s, \mu}$ such that the $\varepsilon$-ball around $v$ is contained in $M_{s, \mu}$.

\begin{claim}\label{claim: algebraic approximation is dynamics}
  For every $s\in \RR$, $\mu>0$, $\varepsilon>0$ and $t>t_0(s, \mu, \varepsilon)$ the following occur:
  \begin{enumerate}[\normalfont (\roman*)]
    \item\label{case:1} Let $s'\in \RR$ be a real number with $|s'-s|<\mu \euler^{-t(1+1/d)}$ and $\sum_{i=0}^d b_i(s')^i=0$ for some $(b_i)_{i=0}^d \in \ZZ^{d+1}$ with $|b_i|<\euler^{t/d}$ for all $i=0, \ldots, d$. Then the vector $a_t\varphi(s)((b_i)_{i=0}^d)$ lies in $\Lambda_{s, t}\cap M_{s, \mu}^\varepsilon$.
    \item\label{case:2} Let $(\eta,(c_i)_{i=1}^d)\in \Lambda_{s, t}\cap M_{s, \mu}^{-\varepsilon}$. Then the vector $$(b_i)_{i=0}^d =(a_t\varphi(s))^{-1}(\eta,(c_i)_{i=1}^d)\in \Z^{d+1}$$ satisfies that $|b_i|<\euler^{t/d}$ for all $i=0, \ldots, d$ and $\sum_{i = 0}^d b_i x^i$ has a real root $s'$ with $|s'-s|<\mu \euler^{-t(1+1/d)}$.
  \end{enumerate}
\end{claim}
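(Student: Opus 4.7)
The plan is to proceed by direct computation, identifying the coordinates of $a_t\varphi(s)(b_0,\ldots,b_d)^\top \in \Lambda_{s,t}$ with data of the polynomial $P(x) = \sum_{i=0}^d b_i x^i$. With $a_t = \diag(\euler^t,\euler^{-t/d},\ldots,\euler^{-t/d})$ (the case $m=1$, $n=d$ of \eqref{eq: a_t}), a short matrix computation gives $\eta = \euler^t P(s)$ and $c_i = \euler^{-t/d}b_i$ for $i=1,\ldots,d$. Under this correspondence, the three defining conditions of $M_{s,\mu}$ translate, up to rescaling, to $|b_i|\leq \euler^{t/d}$ for $i\geq 1$, $|\sum_{i=1}^d b_i s^i|\leq \euler^{t/d}$, and the quantitative derivative condition $|P(s)|\leq \mu\euler^{-t(1+1/d)}|P'(s)|$. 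Both parts of the claim follow by verifying each direction of this correspondence modulo the $\varepsilon$-slack.

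For part (\ref{case:1}), I would invoke $P(s')=0$ and Taylor's theorem to obtain
\[
|P(s)| \leq |s-s'|\,|P'(s)| + \tfrac{1}{2}|s-s'|^2 \sup_{[s,s']}|P''|.
\]
Since $\sup|P''|=O(\euler^{t/d})$ (using $|b_i|<\euler^{t/d}$), the remainder is $O(\mu^2\euler^{-(2+1/d)t})$, which is negligible after multiplying by $\euler^t$; this establishes the derivative condition up to a vanishing error. The bound $|c_i|<1$ for $i\geq 1$ is immediate, and $|\sum_{i=1}^d c_is^i|<1$ follows from $\sum_{i=1}^d b_is^i = -b_0 + P(s)$ together with $|b_0|<\euler^{t/d}$, placing the image in $M_{s,\mu}^\varepsilon$ for $t$ large.

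For part (\ref{case:2}), the hypothesis $(\eta,c)\in M_{s,\mu}^{-\varepsilon}$ provides an $\varepsilon$-gap in each defining inequality (with $s$-dependent gap sizes $\varepsilon'=\varepsilon'(s,\varepsilon)>0$ and $\kappa=\kappa(s,\mu)>0$ for the second and third). This directly yields $|b_i|\leq (1-\varepsilon)\euler^{t/d}<\euler^{t/d}$ for $i\geq 1$, and $|b_0|<\euler^{t/d}$ via $b_0=\euler^{-t}\eta-\sum b_is^i$ combined with $|\euler^{-t}\eta|=O(\mu\euler^{-t})$ and the slack in $|\sum c_is^i|\leq 1$. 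To produce the real root $s'$, I would set $u=\mu\euler^{-t(1+1/d)}$ with its sign chosen so that $uP'(s)$ overcomes $P(s)$, and apply Taylor:
\[
P(s-u) = P(s) - uP'(s) + O(\mu^2\euler^{-2t}).
\]
The derivative slack yields $P(s)-uP'(s)\leq -\kappa\varepsilon\euler^{-t}$, so for $t$ sufficiently large the quadratic error is absorbed and $P(s-u)$ has sign opposite to $P(s)$. The intermediate value theorem then produces the desired real root strictly between $s-u$ and $s$. (The degenerate case $P(s)=0$ is trivial with $s'=s$, and $|P'(s)|=0$ is excluded by the third slack inequality.)

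The main technical point is the careful use of the $\varepsilon$-slack in $M_{s,\mu}^{\pm\varepsilon}$ to absorb the quadratic Taylor remainders and to turn the weak inequalities defining $M_{s,\mu}$ into the \emph{strict} inequalities demanded by the claim. This is precisely what forces $t_0 = t_0(s,\mu,\varepsilon)$ to depend on $s$, through the $s$-dependent constants $\varepsilon'$, $\kappa$ and the implicit constant in $\sup|P''|=O(\euler^{t/d})$.
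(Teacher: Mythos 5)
Your proposal is correct and follows essentially the same route as the paper: the identification $\eta=\euler^{t}P(s)$, $c_i=\euler^{-t/d}b_i$, Taylor's theorem with $\sup|P''|=O(\euler^{t/d})$, and the intermediate value theorem. The only (cosmetic) difference is in part (ii): you take the IVT endpoint at distance exactly $\mu\euler^{-t(1+1/d)}$ from $s$, which makes the bound $|s'-s|<\mu\euler^{-t(1+1/d)}$ automatic, whereas the paper uses a double Newton step $s_0-s=-2P(s)/P'(s)$ and then verifies the bound on $|s'-s|$ by a separate asymptotic computation.
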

The proof of Claim \ref{claim: algebraic approximation is dynamics} is a straightforward but lengthy computation, and we leave it to the end of the subsection. We are now ready to prove Theorem \ref{thm: algapprox}.
\begin{proof}[Proof of upper bound in Theorem \ref{thm: algapprox}]
  For an upper bound of $\overline{\mathcal{A}}_s(\mu)$, as in \eqref{eq: logscaleint},
  it is sufficient to bound \[(*)_{T} = \frac{1}{T}\int_{0}^T \mathbbm{1}(\text{\eqref{eq: algapprox} holds for }N'=\euler^{t/d})\de t.\]
  By Claim \ref{claim: algebraic approximation is dynamics},
  \[
    (*)_{T}\le \frac{1}{T}\int_{0}^T \mathbbm{1}(\Lambda_{s, t}\cap M_{s, \mu}^\varepsilon\neq \{0\}) \de t + \frac{t_0(s, \mu, \varepsilon)}{T}
  \]
  Apply Theorem~\ref{thm: main} to $\varphi, \ZZ^{d+1}$, $a_t$. This is possible as the image of $\varphi$ is not contained in any affine hyperplane of the expanding horosphere of $a_t$. This implies that $\lim_{T\to \infty}(*)_{T} \le m_X(\Lambda \in X: \Lambda \cap M_{s, \mu}^\varepsilon \neq \{0\})$ almost surely.
  Since $\bigcap_{\varepsilon>0} M_{s, \mu}^\varepsilon = M_{s, \mu}$ it follows that \[m_X(\Lambda \in X: \Lambda \cap M_{s, \mu}^\varepsilon \neq \{0\})\xrightarrow{\varepsilon\to 0} f(s, \mu).\]
\end{proof}
The lower bound is slightly more complicated, as we need to guarantee that the algebraic approximations are of degree $d$ and not less.
One possible argument for the irreducibility replaces the space $X_{d+1}$ with the congruence cover $\SL_{d+1}(\RR)/(\ker(\SL_{d+1}(\ZZ) \to \SL_{d+1}(\Z/M\Z)))$
where $M=p_1 p_2\cdots p_k$ is a product of distinct primes and obtaining that the resulting polynomials are random independent nonzero polynomials modulo $p_i$.
Since random polynomials $b_0 +b_1x+\dots + b_d x^d\in \FF_p[x]$ are irreducible with probability $\sim\frac{1}{d}$, we get that the integral polynomial is irreducible with high probability.
We choose a more straightforward proof, relying on the fact that if $s'$ is an algebraic number of degree $d' < d$ then it is too good an approximation with this degree, and hence occurs with $0$ frequency. To control the size of the minimal polynomial of $s'$ provided that it satisfies a degree $d$ polynomial with certain coefficients, we introduce the following lemma:
\begin{lemma}\label{lem: div-size}
  Let $Q(x)|P(x)$ integer polynomials of degree $d'<d$.
  Then $\height{(Q)}\le c(d', d)\height(P)$, where $c(d', d)$ is a constant depending only on $d, d'$.
\end{lemma}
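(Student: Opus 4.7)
The plan is to pass through the Mahler measure, for which divisibility in $\Z[x]$ gives monotonicity. For a non-zero complex polynomial $R(x) = a\prod_i(x-\alpha_i) \in \C[x]$ of degree $e$, recall
\begin{align*}
  M(R) := |a| \prod_i \max(1, |\alpha_i|).
\end{align*}
Two standard properties will be used. First, the Mahler measure is multiplicative in the sense that $M(R_1 R_2) = M(R_1) M(R_2)$. Second, by Kronecker's theorem (or a direct application of Jensen's formula together with the integrality of the constant term of a factor), one has $M(R) \geq 1$ for every non-zero $R \in \Z[x]$.

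Writing $P = Q S$ with $S \in \Z[x]$ (note that $Q \mid P$ in $\Z[x]$ since the statement assumes integer polynomial divisibility; if instead the divisibility is in $\Q[x]$, Gauss's lemma combined with a rescaling reduces to this case), multiplicativity and $M(S) \geq 1$ give
\begin{align*}
  M(Q) \leq M(Q) M(S) = M(P).
\end{align*}

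To pass between Mahler measure and height, I will use Mignotte's inequality, which bounds the coefficients of a polynomial in terms of its Mahler measure: if $Q(x) = \sum_{i=0}^{d'} b_i x^i$, then $|b_i| \leq \binom{d'}{i} M(Q) \leq 2^{d'} M(Q)$. Conversely, Landau's inequality yields $M(P) \leq \|P\|_2 \leq \sqrt{d+1}\,\height(P)$. Chaining these three estimates produces
\begin{align*}
  \height(Q) \leq 2^{d'} M(Q) \leq 2^{d'} M(P) \leq 2^{d'}\sqrt{d+1}\,\height(P),
\end{align*}
so the lemma holds with $c(d',d) = 2^{d'}\sqrt{d+1}$.

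The only step requiring care is the monotonicity $M(Q) \leq M(P)$, which rests on $M(S) \geq 1$ for $S \in \Z[x]$. This is the classical content of Kronecker's theorem and is the main non-elementary input; everything else is a direct manipulation of coefficients and roots. If one prefers to avoid Kronecker, an alternative route is to argue directly with the resultant or with the Bombieri--Beauzamy--Enflo--Montgomery norm, but the Mahler measure approach gives the cleanest constant.
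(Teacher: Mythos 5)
Your proof is correct, but it takes a genuinely different route from the paper's. The paper reduces the lemma to a purely real statement: for polynomials $Q_1,Q_2\in\R[x]$ of fixed degrees $k,l$ one has $\height(Q_1)\,\height(Q_2)\le C(k,l)\,\height(Q_1Q_2)$, where $C(k,l)$ is produced by normalization and compactness --- both sides are homogeneous in each factor, so one restricts to the compact set where $\height(Q_1)=\height(Q_2)=1$, on which $\height(Q_1Q_2)$ has a positive minimum because a product of nonzero polynomials is nonzero. Applying this to $P=QS$ and using $\height(S)\ge 1$ for the nonzero integer cofactor $S$ gives the lemma with a non-explicit constant. Your argument substitutes the Mahler measure for the compactness step: multiplicativity together with $M(S)\ge 1$ plays exactly the role of the paper's two inequalities, and Mignotte's and Landau's bounds convert between $M$ and $\height$. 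What you gain is an explicit constant $2^{d'}\sqrt{d+1}$ (not needed for the application, where only the dependence on $d,d'$ matters, but cleaner); what you pay is importing two standard but non-trivial inequalities. Two small remarks: the bound $M(S)\ge 1$ for nonzero $S\in\Z[x]$ needs neither Kronecker's theorem nor Jensen's formula --- it is immediate from the definition, since the leading coefficient is a nonzero integer and each factor $\max(1,|\alpha_i|)$ is at least $1$; and your aside about Gauss's lemma is apt, since in the application $Q$ is the primitive minimal polynomial of an algebraic number dividing $P$ a priori only in $\Q[x]$.
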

\begin{proof}
  For every polynomial $P\in \RR[x]$ denote by ${\rm maxcoef}(P)$ the maximal absolute value of a coefficient of $P$.
  It is sufficient to show that for every $k, l \ge 0$ there exists a constant $C(k, l)>0$ such that for every $Q_1,Q_2\in \RR[x]$ with
  $\deg(Q_1)=k, \deg(Q_2) = l$ we have
  \[{\rm maxcoef}(Q_1)\,{\rm maxcoef}(Q_2)\le C(k, l){\rm maxcoef}(Q_1Q_2).\]
  This holds with
  \[C(k, l) = \left(
    \min_{\substack{{\rm maxcoef}(Q_1) = 1\\{\rm maxcoef}(Q_2)=1}}{\rm maxcoef}(Q_1Q_2)
    \right)^{-1}.
  \]
\end{proof}

\begin{proof}[Proof of lower bound in Theorem \ref{thm: algapprox}]
  Fix $s\in \RR$. We say that $t$ is \emph{almost $d$-algebraic $\mu$-approximable} if it satisfies \eqref{eq: algapprox} for $N=\euler^{t/d}, \mu$ with $s'$ an algebraic number which is a root of a polynomial $P(x) = b_0+b_1x+b_2x^2+\dots+b_dx^d$, which is not necessarily irreducible, with $|b_i|<\euler^{t/d}$.
  If $s'$ is not of degree $d$, there is an integer irreducible polynomial $Q(x)|P(x)$ of degree $d'<d$ such that $Q(s') = 0$.
  To bound the maximal coefficient of $Q$, we use the Lemma~\ref{lem: div-size}.
  Thus, if $t$ is almost $d$-algebraic $\mu$-approximable,
  with an approximation $s'$ of degree $d'<d$, then $s'$ is the root of an integral irreducible polynomial $Q(x)$ of degree $d'$
  and coefficients less than $c(d', d)\euler^{t/d}$ in absolute value.
  By the upper bound of Theorem \ref{thm: algapprox}, we get that for almost every $s$, approximations of $s$ with $s'$ algebraic of degree $d'<d$ with $\|s'\|<\euler^{t/d}c(d', d)$, then for proportion $1$ of $t$, such $s'$ has $s-s' = \omega((\euler^{t/d}c(d', d))^{-d'-1})$.
  Since this approximation is too bad to make $t$ almost $d$-algebraic $\mu$-approximable, it follows that it is enough to bound from below
  \[(**)_{T} = \frac{1}{T}\int_{0}^T \mathbbm{1}(t \text{ is almost $d$-algebraic $\mu$-approximable}) \de t.\]
  As in the proof of the upper bound,
  \[
    (**)_{T}\ge \frac{1}{T}\int_{0}^T \mathbbm{1}(\Lambda_{s, t}\cap M_{s, \mu}^{-\varepsilon}\neq \{0\}) \de t \xrightarrow{T\to \infty}m_X(\Lambda \in X: \Lambda \cap M_{s, \mu}^{-\varepsilon} \neq \{0\}),
  \]
  for every $\varepsilon>0$.
  Now, 
\begin{align*}
m_X(\Lambda \in X: \Lambda \cap M_{s, \mu}^{-\varepsilon} \neq \{0\})
    \xrightarrow{\varepsilon\to 0}
    m_X(\Lambda \in X: \Lambda \cap M_{s, \mu}^{\circ} \neq \{0\}),
\end{align*}  
where $M_{s, \mu}^{\circ}$ is the interior of $M_{s, \mu}$.
  Since measure of the boundary $\partial M_{s, \mu}$ is zero, Siegel's formula \cite{Siegel} implies that $\Lambda\cap \partial M_{s, \mu} = \{0\}$ for almost every $\Lambda\in X$.
  Hence
  $m_X(\Lambda \in X: \Lambda \cap M_{s, \mu}^{\circ} \neq \{0\}) = f(s, \mu)$. The desired claim follows.
\end{proof}
\begin{proof}[Proof of Claim~\ref{claim: algebraic approximation is dynamics}]
  Let $(b_i)_{i=0}^d\in \ZZ^{d+1}$. Let
  \begin{align}\label{eq: b_i c_i relation}
    (\eta,(c_i)_{i=1}^d)
    =a_t\varphi(s)(b_i)_{i=0}^d \in \Lambda_{s, t}.
  \end{align}
  Let $P(x) = \sum_{i=0}^d b_i x^i$, and $s'$ be a root of $P(s)$.
  In particular, $\eta = \euler^{t}P(s)$ and $c_i = \euler^{-t/d}b_i$ so that
  \begin{align*}
    \sum_{i=1}^dc_is^i = \euler^{-t/d}(P(s)-b_0),\quad
    \sum_{i=1}^d i c_is^{i-1} = \euler^{-t/d}P'(s).
  \end{align*}
  By Taylor's Theorem (see \cite[\S 20.3]{kline1998calculus}),
  \begin{align}\label{eq: taylor}
    P(s') = P(s) + (s'-s)P'(s) + \frac{1}{2}P''(s'')(s'-s)^2
  \end{align}
  for some $s''\in [s, s']$.
  Assume we are in Case \ref{case:1} of the claim.
  We get that $P''(s'') = O(\euler^{t/d})$, and hence
  $P(s') = 0 = P(s) + (s'-s)P'(s) + O(\euler^{t/d}(s'-s)^2)$. This implies that
  \begin{align*}
    \left| \sum_{i=1}^di c_is^{i-1}\right|\euler^{-t}\mu = |P'(s)|\euler^{-t(1+1/d)}\mu > |P'(s)||s'-s| & = \left|P(s) + O(\euler^{-t(2+1/d)})\right|             \\
                                                                                                        & = \left|\euler^{-t}\eta + O(\euler^{-t(2+1/d)})\right|,
  \end{align*}
  that is, $|\sum_{i=1}^di c_is^{i-1}|\mu > \left|\eta + O(\euler^{-t(1+2/d)})\right|$.
  Consequently, $(\eta, (c_i)_{i=1}^d)$ satisfies the second condition of $M_{s, \mu}$ up to an $o_t(1)$ error. In addition, we get that $\eta = O(1)$.
  For the first condition, we have that $|c_i|<1$ because $|b_i|<\euler^{t/d}$.
  Now $\sum_{i=1}^d c_is^i = \euler^{-t/d} (P(s) - b_0) = \euler^{-t(1+1/d)} \eta - b_0\euler^{-t/d}$.
  Since $|b_0|<\euler^{t/d}$, we deduce that $\left|\sum_{i=1}^d c_is^i\right| < 1+O(\euler^{-t(1+1/d)})$, and hence $(c_i)_{i=1}^d\in C_s$ up to an $o_t(1)$ error.
  This implies that $(\eta, (c_i)_{i=1}^d)\in M_{s, \mu}^{\varepsilon}$ for all $t$ sufficiently large.

  Assume now the assumptions of Case \ref{case:2}.
  Then $|b_i| = \euler^{t/d}|c_i| <\euler^{t/d}$ for every $i=1, \ldots, d$, by Eq. \eqref{eq: b_i c_i relation}.
  As for $b_0 = \euler^{-t} \eta - \euler^{t/d}\sum_{i=1}^ds^i c_i$, from the definition of $M_{s, \mu}^{-\varepsilon}$ we get that $\left|\sum_{i=1}^ds^i c_i\right| < 1-\delta(s)\varepsilon$, where $\delta(s) = (s^2 + s^4 + \dots + s^{2d})^{-1/2}$.
  Also, $\eta = O_{s,\mu}(1)$ follows.
  This implies that $|b_0| < \euler^{t/d}$ for all $t$ large enough.

  Now it remains to find a root $s'$ of $P(x)$ which is close to $s$.
  For all $s''$ within distance $1$ of $s$ we have $P''(s'') = O(\euler^{t/d})$, and hence \eqref{eq: taylor} implies that for all $s' \in [s-1, s+1]$
  \begin{align}\label{eq: taylor2}
    P(s') = P(s) + (s'-s)P'(s) + O(\euler^{t/d}(s'-s)^2)
  \end{align}
  Note that by the definition of $M_{s, \mu}$, if $\sum_{i=1}^d ic_i s^{i-1}=0$ then also $\eta=0$, and hence $M_{s, \mu}\cap \{(\eta, (c_i)_{i=1}^d): \sum_{i=1}^d ic_i s^{i-1}=0\}$ has trivial interior in $\{(\eta, (c_i)_{i=1}^d): \sum_{i=1}^d ic_i s^{i-1}=0\}$.
  This implies that
  \[\left|\sum_{i=1}^d ic_i s^{i-1}\right| \ge \delta(s)\varepsilon, \]
  and hence $|P'(s)| >\delta(s)\varepsilon \euler^{t/d}$.
  Also, recall that $\eta = O(1)$ so that $P(s) = O(\euler^{-t})$.
  %
  Let $s_0$ be chosen with $s_0 - s = 2\frac{P(s)}{P'(s)}$ so that in particular $|s_0-s| = O(\euler^{-(1+1/d)t})$.
  Then by \eqref{eq: taylor2}
  \begin{align*}
    P(s_0)
     & = -P(s) + O\left(\euler^{t/d} \left( \frac{P(s)}{P'(s)} \right)^2\right)
    \\
     & = -P(s) + O\left(P(s) \euler^{-(1+1/d)t}\right).
  \end{align*}
  Consequently, $P(s_0)$ and $P(s)$ have opposite signs (or they both vanish) and hence there is $s'$ between $s$ and $s_0$ with $P(s') = 0$.
  Moreover, by \eqref{eq: taylor2},
  this $s'$ satisfy that
  \begin{align*}
    -\frac{P(s)}{P'(s)}
     & = (s'-s) + O(\euler^{t/d}(s'-s)^2) \\
     & = (s'-s) + O(\euler^{-(2+1/d)t})
  \end{align*}
  and hence
  \begin{align*}
    s'-s = -\euler^{-(1+1/d)t}\frac{\eta}{\sum_{i=1}^d ic_is^{i-1}}+O(\euler^{-2(1+1/d)t}).
  \end{align*}
  By assumption,  $(\eta\pm\delta, (c_i)_{i=1}^d) \in M_{s, \mu}$ for any $\delta < \varepsilon$ which can be seen to imply $|s'-s| < \mu \euler^{-(1+1/d)t}$.
\end{proof}

\subsection{Proof of Theorem~\ref{thm: bestapprox}}
The result follows directly from the methods of \cite{ShapiraWeiss} and Theorem~\ref{thm: main}, here we merely point out the specific ingredients we use and provide an overview.
Write $X = X_{d+1} = \SL_{d+1}(\R)/\SL_{d+1}(\Z)$, $a_t = \diag(\euler^{t}, \ldots, \euler^{t}, \euler^{-dt})$ for $t \in \R$ and for $\xi \in \R^d$
\begin{align*}
  u(\xi) = \begin{pmatrix}
             1 &   &        & \xi_1  \\
               & 1 &        & \vdots \\
               &   & \ddots & \xi_d  \\
               &   &        & 1
           \end{pmatrix}\in \SL_{d+1}(\R).
\end{align*}
We now recall the construction of a section for the diagonal flow $a_t$ from \cite{ShapiraWeiss} (following mostly their notation).
For $v \in \R^{d+1}$ we let $\bar{v}= (v_1, \ldots, v_d)^t$.
For $r>0$ define a cylindrical set and its top by
\begin{align*}
  C_r & = \{v \in \R^{d+1}: \norm{\bar{v}}\leq r, |v_{d+1}|\leq 1\}, \\
  D_r & = \{v \in \R^{d+1}: \norm{\bar{v}}\leq r, v_{d+1}=1\}.
\end{align*}
Now let $\mathcal{S}_r\subset X$ be the set of lattices with at least one primitive vector in $D_r$ and $\mathcal{S}_r^{\#}\subset \mathcal{S}_r$ the set of lattices with exactly one primitive vector in $D_r$. Choose $r_0>0$ sufficiently large such that $C_{r_0}$ has Lebesgue measure at least $2^{d}$.
By \cite[Lemma 8.4]{ShapiraWeiss}, $\mathcal{S}_{r_0}$ is a cross section in the sense of \cite[Def.~4.2]{ShapiraWeiss} (with respect to the Haar measure $m_X$ on $X$).
By work of Ambrose and Kakutani this induces a finite measure $\mu_{\mathcal{S}_{r_0}}$ on $\mathcal{S}_{r_0}$ with natural properties \cite[Thm.~4.4]{ShapiraWeiss} and with $\mu_{\mathcal{S}_{r_0}}(\mathcal{S}_{r_0}\setminus \mathcal{S}_{r_0}^{\#})=0$.
Moreover, the cross-section $\mathcal{S}_{r_0}$ is $m_X$-reasonable (see \cite[Def.~5.3, Thm.~8.6]{ShapiraWeiss}) which allows us to relate notions of Birkhoff genericity (for $a_t$ and for returns to the cross-section).

The significance of the cross section $\mathcal{S}_{r_0}$ and its subset $\mathcal{S}_{r_0}^{\#}$ can be seen as follows:
let $\xi \in \R^d$ and suppose that $t >0$ is such that $a_tu(\xi)\Z^{d+1}\in \mathcal{S}_{r_0}^{\#}$.
Then there exists a unique $(p, q) \in \Z^{d+1}$ primitive such that $a_t u(\xi)(p, q)^t \in D_{r_0}$, that is, $q = \euler^{dt}$ and $\norm{q^{\frac{1}{d}}(p-q\xi)}\leq r_0$. It is however not guaranteed that the so obtained vector $(p, q)$ is a best approximation.
To that end, for any $\Lambda \in \mathcal{S}_{r_0}^{\#}$ write $v(\Lambda) = (v_{\Lambda},1)$ for the unique primitive vector in $D_{r_0}\cap \Lambda$ and set $r(\Lambda) = \norm{v_\Lambda}$.
Let $\mathcal{B}\subset \mathcal{S}_{r_0}^{\#}$ be the set of lattices $\Lambda$ with
\begin{align*}
  \{\lambda \in \Lambda \cap C_{r(\Lambda)}: \lambda \text{ primitive}\} = \{\pm v(\Lambda)\}.
\end{align*}
Now whenever $a_tu(\xi)\Z^{d+1}\in \mathcal{B}$ it is easy to see that the so obtained vector $(p, q)$ as above is a best approximation.
In fact, all but finitely many best approximations arise in this manner \cite[Prop.~10.4]{ShapiraWeiss}.

We also remark that the set $\mathcal{B}$ is a well-behaved subset of $\mathcal{S}_{r_0}$: it is a positive measure Jordan measurable subset with respect to $\mu_{\mathcal{S}_{r_0}}$ and it is tempered \cite[Lemma 9.2, Prop.~9.8]{ShapiraWeiss}.
This implies that whenever $x \in X$ is an $a_t$-generic point (with respect to $m_X$) the sequence of visits $\{a_tx: a_tx \in \mathcal{B}\}$ is equidistributed with respect to $\nu := \frac{1}{\mu_{\mathcal{S}_{r_0}}(\mathcal{B})}\mu_{\mathcal{S}_{r_0}}|_{\mathcal{B}}$ -- see \cite[Thm.~5.11]{ShapiraWeiss}.
By Theorem~\ref{thm: main} we may apply this to the point $u(\phi(s))\Z^{d+1}$ for almost every $s\in [0,1]$.

It remains to deduce Theorem~\ref{thm: bestapprox} from the above equidistribution result.
The map (see \cite[\S8.2]{ShapiraWeiss})
\begin{align*}
  \Lambda \in  \mathcal{S}_{r_0}^{\#} \mapsto (\pi^{e_{d+1}}(u(-v_{\Lambda})\Lambda), v_{\Lambda}) \in X_d \times \overline{B_{r_0}}
\end{align*}
is continuous and surjective. Moreover, it is easy to check that it associates to any $\xi\in \R^{d}$ and $t>0$ with $a_tu(\xi)\Z^{d+1}\in \mathcal{B}$ the projection lattice and the displacement vector of the corresponding best approximation.
The measure $\mu_{d, \norm{\cdot}}$ in Theorem~\ref{thm: ShapiraWeiss} is the pushforward of $\nu$ under the above map and hence the theorem follows.
Note that the properties of the measure $\nu$ are studied in \cite[\S11]{ShapiraWeiss}.


\providecommand{\bysame}{\leavevmode\hbox to3em{\hrulefill}\thinspace}
\providecommand{\MR}{\relax\ifhmode\unskip\space\fi MR }
\providecommand{\MRhref}[2]{%
  \href{http://www.ams.org/mathscinet-getitem?mr=#1}{#2}
}
\providecommand{\href}[2]{#2}

\end{document}